\numberwithin{table}{section}
\theoremstyle{plain}
\newtheorem{theorem}{Theorem}[section]
\newtheorem{lemma}[theorem]{Lemma}
\newtheorem{proposition}[theorem]{Proposition}
\newtheorem{corollary}[theorem]{Corollary}
\theoremstyle{definition} 
\newtheorem{definition}[theorem]{Definition}
\newtheorem{remark}[theorem]{Remark}
\newtheorem{hypothesis}[theorem]{Hypothesis}
\renewcommand{\ge}{\geqslant}
\renewcommand{\le}{\leqslant}
\renewcommand{\geq}{\geqslant}
\renewcommand{\leq}{\leqslant}
\newcommand{\lhdeq}{\trianglelefteqslant}    
\newcommand{\alphadown}{\alpha\hspace{-2.5pt}\downarrow}
\newcommand{\Aut}{\textup{Aut}}
\newcommand{\Char}{\textup{char}}
\newcommand{\F}{\mathbb{F}}
\newcommand\G{\mathcal{G}}
\newcommand\GL{\textup{GL}}
\newcommand\AGL{\textup{AGL}}
\newcommand\im{\textup{im}}
\newcommand\PSL{\textup{PSL}}
\newcommand\SL{\textup{SL}}
\newcommand\Sym{\textup{\textsf{S}}}
\newcommand{\Z}{\textup{\textsf{Z}}}
\newcommand{\UCS}{\mbox{\rm UCS}}
\newcommand{\IAC}{\mbox{\rm IAC}}
\newcommand{\prd}[2]{\llbracket #1,#2\rrbracket}
\newcommand{\iac}[1]{\mathcal L(#1)}
\newcommand{\grp}[1]{\mathcal G(#1)}
\def\@adminfootnotes{%
  \let\@makefnmark\relax  \let\@thefnmark\relax
  \ifx\@empty\@date\else \@footnotetext{\@setdate}\fi
  \ifx\@empty\@subjclass\else \@footnotetext{\@setsubjclass}\fi
  \ifx\@empty\@keywords\else \@footnotetext{\@setkeywords}\fi
  \ifx\@empty\thankses\else \@footnotetext{%
    \def\par{\let\par\@par}\@setthanks}%
  \fi}\makeatother   
\begin{document}

\hyphenation{Trans-lated}

\title[UCS $p$-groups and anti-commutative algebras]
      {Duality between $p$-groups with three characteristic subgroups
and semisimple anti-commutative algebras}

\date{\today}

\author{S.\,P. Glasby, Frederico A.\ M. Ribeiro and Csaba Schneider}
\address[Glasby]{
Centre for Mathematics of Symmetry and Computation\\
University of Western Australia\\
35 Stirling Highway\\
Perth 6009, Australia. Email: {\tt Stephen.Glasby@uwa.edu.au}; URL: {\tt\href{http://www.maths.uwa.edu.au/~glasby/}{http://www.maths.uwa.edu.au/$\sim$glasby/}}}
\address[Ribeiro, Schneider]{Departamento de Matem\'atica\\
Instituto de Ci\^encias Exatas\\
Universidade Federal de Minas Gerais\\
Av.\ Ant\^onio Carlos 6627\\
Belo Horizonte, MG, Brazil. Email:
{\rm\texttt{fred321@gmail.com}}, {\rm\texttt{csaba@mat.ufmg.br}}; URL:
{\tt\href{http://www.mat.ufmg.br/~csaba}{http://www.mat.ufmg.br/$\sim$csaba}}}


\begin{abstract}
  Let $p$ be an odd prime and let $G$ be a non-abelian finite $p$-group
  of exponent $p^2$ with
three distinct characteristic subgroups, namely $1$, $G^p$, and $G$.
The quotient group $G/G^p$ gives rise to an anti-commutative
$\F_p$-algebra~$L$ such that the action of $\Aut(L)$ is irreducible on $L$;
we call such an algebra IAC.
This paper establishes a duality $G\leftrightarrow L$
between such groups and such IAC algebras.  We
prove that IAC algebras are semisimple and  we classify the
simple IAC algebras of dimension at most 4 over certain fields.
We also give other examples of simple IAC
algebras, including a family
related to the $m$-th symmetric power of the natural module of
$\SL(2,\F)$.
\end{abstract}

\maketitle
\centerline{\noindent 2010 Mathematics subject classification:
 20D15, 20C20, 20E15, 20F28, 17A30, 17A36}

\section{Introduction}\label{sec1}

Building on earlier work by Taunt~\cite{taunt} and by
the first and the third authors
in collaboration with P.~P.~P\'alfy~\cite{ucs}, 
we continue, in this paper, our investigation into the structure of finite
groups
with a unique non-trivial and proper characteristic
subgroup. Such a group is said to
be UCS. In~\cite{ucs}, finite UCS $p$-groups were studied.
The exponent of a finite UCS $p$-group is either $p$ or $p^2$.
If $p$ is an odd prime and $G$ is a UCS $p$-group of exponent $p^2$, then
the Frattini quotient
$V=G/\Phi(G)$
is an irreducible $\Aut(G)$-module and  $V$ is an epimorphic
image of the exterior square $\wedge^2 V$.
An epimorphism $\wedge^2 V\rightarrow V$ can be viewed as an
anti-commutative multiplication on the vector space $V$ which turns $V$ into
a non-associative and anti-commutative algebra $\iac G$
defined in Section~\ref{sec3}. Further,
by Theorem~\ref{main1}(a) below,
the automorphism group of $\iac G$ is induced by $\Aut(G)$, and is irreducible
on $V$. The main results of this paper explore the connection
between $G$ and $\iac G$.

To state the first main theorem of the paper, let $\UCS_{p^2}$ denote the
set of isomorphism classes of finite UCS $p$-groups of exponent $p^2$ and,
for an odd prime $p$, let $\IAC_p$ denote the set of isomorphism classes of
finite-dimensional anti-commutative algebras $L$ over the field $\F_p$ of $p$ elements such that $\Aut(L)$
is irreducible on $L$. Such an algebra will be referred to as an IAC algebra.

\begin{theorem}\label{main1}
  If $p$ is an odd prime, then the map $G\mapsto \iac G$ is a bijection between $\UCS_{p^2}$ and $\IAC_{p}$.
  Further, if $G\in \UCS_{p^2}$ and $L=\iac G$ then the following hold.
  \begin{enumerate}[{\rm (a)}]
  \item There is an isomorphism $\Aut (G)/\Aut_C(G)\cong \Aut(L)$ where
    $\Aut_C(G)$ denotes the group of central automorphisms of $G$.
    \item There is a bijection between the set of subalgebras of $L$ and the
      set of powerful subgroups of $G$ that contain $\Phi(G)$.
    \item The bijection in part~\textup{(b)} restricts to a bijection between ideals
      of $L$ and powerfully embedded subgroups of $G$ containing $\Phi(G)$.
    \item For $k\geq 2$, we have that $G^{\times k}\in \UCS_{p^2}$ and $\iac{G^{\times k}}\cong
      L^{\oplus k}$.
  \end{enumerate}
\end{theorem}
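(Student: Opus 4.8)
The plan is to build an explicit inverse $L\mapsto\grp L$ to $G\mapsto\iac G$ and then read off (a)--(d) from the resulting dictionary. First I would record the structure of $G\in\UCS_{p^2}$ underlying the definition of $\iac G$: each of $\Phi(G)$, $G^p$, $[G,G]$ and $\Z(G)$ is characteristic and, for $G$ non-abelian, nontrivial and proper, hence equals the unique such subgroup $W$; the $p$-th power map induces an $\Aut(G)$-equivariant isomorphism $\pi\colon V:=G/W\to W$ (injectivity because $\{x\in G:x^p=1\}$ is a characteristic subgroup, hence equals $W$), the commutator induces an $\Aut(G)$-equivariant alternating bilinear map $c\colon V\times V\to W$, and $\iac G=(V,\ast)$ with $u\ast v=\pi^{-1}(c(u,v))$. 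That $\iac G\in\IAC_p$ is then immediate: $\Aut(G)$ acts on $V$ preserving $\ast$, and an $\Aut(G)$-submodule of $V$ pulls back to a characteristic subgroup between $W$ and $G$, hence is $0$ or $V$; thus the image of $\Aut(G)$ in $\GL(V)$ is irreducible and lies in $\Aut(\iac G)$, so $\Aut(\iac G)$ is irreducible.

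For the inverse, given $L=(V,\ast)\in\IAC_p$ with $\dim_{\F_p}V=n$, let $F(n)$ be the relatively free group of nilpotency class $\le2$ on generators $e_1,\dots,e_n$ satisfying $x^{p^2}=1$ and $[x,y]^p=1$; then $F(n)/\Phi(F(n))\cong V$ and $\Phi(F(n))=[F(n),F(n)]\times F(n)^p\cong\wedge^2 V\oplus V$ via the $\GL(V)$-equivariant identifications $[e_i,e_j]\leftrightarrow e_i\wedge e_j$ and $e_i^p\leftrightarrow e_i$. Let $\beta\colon\wedge^2 V\to V$ be the linearisation of $\ast$ (so $\beta(u\wedge v)=u\ast v$), set $\mu(\omega,v)=\beta(\omega)+v$ on $\wedge^2 V\oplus V$, and define $\grp L:=F(n)/\ker\mu$. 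A direct computation shows $\grp L$ has exponent $p^2$ with $\Phi(\grp L)=\Z(\grp L)=\grp L^p\cong V$, its power map being the identity and its commutator map $\beta$, so $\iac{\grp L}\cong L$. The crucial point for $\grp L\in\UCS_{p^2}$ is that every $\phi\in\Aut(L)$ lifts to $\Aut(\grp L)$: lift $\phi$ to $\Aut(F(n))$ (free in its variety), note it acts on $\Phi(F(n))$ as $\wedge^2\phi\oplus\phi$, and observe that $\phi\in\Aut(L)$ says exactly $\beta\circ\wedge^2\phi=\phi\circ\beta$, so $\mu\circ(\wedge^2\phi\oplus\phi)=\phi\circ\mu$ and $\ker\mu$ is preserved; hence the image of $\Aut(\grp L)$ in $\GL(V)$ equals $\Aut(L)$ and $V$ is $\Aut(\grp L)$-irreducible. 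A proper nontrivial characteristic $H\le\grp L$ then has $HW/W$ a submodule of $V$, while being characteristic it satisfies $\Phi(\grp L)=W\le H$, so $H=W$: thus $\grp L$ is UCS. Finally $\grp{\iac G}\cong G$: writing $G=F(n)/K_G$ (with $n=\dim V$), the natural surjection $\Phi(F(n))\to W$ is $\omega\oplus v\mapsto\gamma(\omega)+\pi(v)$ with $\gamma$ the commutator linearisation, and as $\beta=\pi^{-1}\circ\gamma$ this has the same kernel as the map $\mu$ attached to $\iac G$, so $K_G=\ker\mu$. Hence $\iac{}$ and $\grp{}$ are mutually inverse bijections $\UCS_{p^2}\leftrightarrow\IAC_p$.

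With the dictionary in hand, (a)--(d) are routine. The homomorphism $\Aut(G)\to\GL(V)$ has kernel the automorphisms trivial on $G/\Z(G)=V$, i.e.\ $\Aut_C(G)$, and image $\Aut(L)$ by the two inclusions above, giving (a). For (b) and (c): subgroups $H$ with $W\le H\le G$ are all normal (as $G/W$ is abelian) and correspond bijectively to subspaces $U=H/W\le V$; from the commutator and power descriptions, $[H,H]=\pi(U\ast U)$, $[H,G]=\pi(U\ast V)$ and $H^p=\pi(U)$, so (with $p$ odd) $H$ is powerful $\iff U\ast U\le U$ $\iff U$ is a subalgebra, and $H$ is powerfully embedded $\iff U\ast V\le U$ $\iff U$ is an ideal. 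For (d): $G^{\times k}$ has $\Phi=\Z=(G^{\times k})^p=W^{\times k}$ with componentwise commutator and power maps, so $\iac{G^{\times k}}\cong L^{\oplus k}$; it remains to see $L^{\oplus k}\in\IAC_p$. Now $\Aut(L^{\oplus k})$ contains $\Aut(L)\wr\Sym_k$, and for this group a Goursat-type argument shows any submodule $M\le V^{\oplus k}$ is $0$ or $V^{\oplus k}$: the projections of $M$ to, and intersections of $M$ with, the summands are $\Aut(L)$-submodules of $V$ (hence $0$ or $V$), and the only configuration not forcing $M\in\{0,V^{\oplus k}\}$ is a diagonal copy, which would make $\Aut(L)=1$ --- impossible, since $\Aut(L)$ is irreducible on $V$ and, as $p$ is odd, is nontrivial even when $\dim V=1$.

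The main obstacle is the second paragraph: pinning down the correct relatively free group $F(n)$ and the subgroup $\ker\mu$, and verifying cleanly that $\grp L$ has exponent exactly $p^2$ with $\Phi(\grp L)=\Z(\grp L)=\grp L^p$ and is UCS. The conceptual heart is the lifting $\Aut(L)\hookrightarrow\Aut(\grp L)$, which is precisely what transfers the irreducibility hypothesis on $\Aut(L)$ into the characteristic-subgroup structure of $\grp L$; the remaining verifications are bookkeeping inside the two-sided dictionary.
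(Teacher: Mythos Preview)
Your approach is essentially the paper's: your relatively free group $F(n)$ is the paper's $H_{p,r}$, your $\ker\mu$ is its normal subgroup $N$, your lifting $\Aut(L)\hookrightarrow\Aut(\grp L)$ is the commutative diagram the paper draws, and your treatments of (a)--(d) follow the same dictionary (the paper cites a Clifford-type lemma for (d) where you run the Goursat argument directly, but the content is identical).

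One sentence needs repair. In the verification that $\grp L$ is UCS you write that a proper nontrivial characteristic subgroup $H$, ``being characteristic[,] satisfies $\Phi(\grp L)=W\le H$''; this is not a consequence of being characteristic, and as stated is circular. The correct argument (which is what the paper's Lemma~2.2 encodes) is the two-sided one you have almost set up: $HW/W$ is an $\Aut(\grp L)$-submodule of $V$, so either $HW=\grp L$ (whence $H=\grp L$ by the Frattini property) or $H\le W$; in the latter case $H$ is an $\Aut(\grp L)$-submodule of $W$, and since your equivariant isomorphism $\pi\colon V\to W$ transports irreducibility from $V$ to $W$, this forces $H\in\{1,W\}$. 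With that fix your sketch goes through.
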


The first statement of Theorem~\ref{main1} follows from
Theorem~\ref{th:bij}, while
parts (a), (b),  (c) and (d) follow from
Theorem~\ref{th:aut}, Proposition~\ref{pembedded}, and from Theorem~\ref{SumPower}.

In the second part of the paper, we prove several results about
small-dimensional IAC algebras over various fields, which can be summarized in the following theorem.

\begin{theorem}\label{main2}
  The following are valid for a field $\F$. 
  \begin{enumerate}[{\rm (a)}]
  \item A finite-dimensional IAC algebra over $\F$ is the direct sum of
    pairwise isomorphic simple IAC algebras.
  \item If $\Char(\F)\neq 2$, then a non-abelian
    $3$-dimensional IAC algebra over $\F$ is a simple Lie algebra.
  \item Suppose that $\F$ is a finite field of $q=p^f$ elements where $p$
    is an odd prime.  Then the number of isomorphism types of $4$-dimensional IAC
    algebras over $\F$ is \textup{(i)} $0$ if $p=5$, \textup{(ii)} $1$ if $f$
    is even or $p\equiv\pm1\pmod5$,  and \textup{(iii)} $2$ if $f$ is odd and
    $p\equiv \pm 2\pmod 5$.
  \item If $m\geq 2$, $m\equiv 2\pmod 4$, and either $\Char(\F)=0$ or $2m<\Char(\F)$, then there
    is an $(m+1)$-dimensional IAC algebra $L$ over $\F$ whose automorphism
    group contains a subgroup isomorphic to $\SL(2,\F)$ acting
    absolutely irreducibly
    on $L$.
  \end{enumerate}
\end{theorem}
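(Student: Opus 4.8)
The plan is to deduce (b) and (c) from the structure theorem (a), which I would establish first.

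\emph{Part (a).} I would argue with minimal ideals. If $L$ is abelian then $\Aut(L)=\GL(L)$ acts irreducibly on every line, so $L$ is a direct sum of $\dim L$ copies of the $1$-dimensional algebra; so assume $L$ non-abelian, whence the annihilator $\Z(L)=\{x\in L:xL=0\}$—an $\Aut(L)$-invariant proper subspace—is $0$. Distinct minimal ideals $I,J$ meet in $0$ by minimality, so $IJ\subseteq I\cap J=0$; and the sum of the $\Aut(L)$-orbit of a minimal ideal is invariant, hence equals $L$, so I can write $L=I_1\oplus\cdots\oplus I_k$ with the $I_j$ minimal ideals and $I_aI_b=0$ for $a\ne b$. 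Each $I_j$ is then simple (an ideal of $I_j$ is an ideal of $L$, as the remaining summands annihilate $I_j$) with $I_j^2\ne0$ (else $I_j\subseteq\Z(L)=0$); and since $\Aut(L)$ is transitive on minimal ideals, the $I_j$ are mutually isomorphic. Fixing isomorphisms $I_1\to I_j$ and using $I_aI_b=0$, I would identify $\Aut(L)$ with $\Aut(I_1)\wr\Sym_k$ acting naturally on $I_1^{\oplus k}$; irreducibility then forces $\Aut(I_1)$ irreducible on $I_1$, so $I_1$ is IAC, giving (a).

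\emph{Parts (b) and (d).} For (b): by (a) a non-abelian $3$-dimensional IAC algebra $L$ is simple (the only other shape is the abelian $\F^{\oplus3}$). Fixing $0\ne\omega\in\wedge^3L^*$ identifies $\wedge^2L$ with $L^*$ and turns the product into a bilinear form $B$ on $L$, well defined up to a scalar and preserved by each $\alpha\in\Aut(L)$ up to the scalar $\det\alpha$; hence $\Aut(L)$ preserves the symmetric and alternating parts $B=B_{\mathrm s}+B_{\mathrm a}$ (using $\Char(\F)\ne2$) and their radicals, which are therefore $\Aut(L)$-invariant, so $0$ or $L$. An alternating form in dimension $3$ has a radical of dimension $\ge1$, so $B_{\mathrm a}=0$ and $B$ is symmetric; by the classical dictionary for $3$-dimensional anti-commutative algebras—Jacobi identity $\iff$ $B$ symmetric—$L$ is a Lie algebra, so a simple Lie algebra. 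For (d): take $V=\F^2$ the natural $\SL(2,\F)$-module and $L=\mathrm{Sym}^mV$; under the hypothesis $\Char(\F)=0$ or $2m<\Char(\F)$ the Clebsch--Gordan decomposition behaves as in characteristic $0$, giving $\wedge^2\mathrm{Sym}^mV=\bigoplus_{i\ge0}\mathrm{Sym}^{2m-4i-2}V$, which contains $\mathrm{Sym}^mV$—with multiplicity one—exactly when $m\equiv2\pmod4$. Projection onto that unique copy is a non-zero $\SL(2,\F)$-equivariant anti-commutative product, making $L$ an $(m+1)$-dimensional anti-commutative algebra with $\SL(2,\F)\le\Aut(L)$; as $\mathrm{Sym}^mV$ is absolutely irreducible for $\SL(2,\F)$ in this range, $L$ is IAC.

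\emph{Part (c).} By (a) a non-abelian $4$-dimensional IAC algebra is simple: the shape $\F^{\oplus4}$ is abelian, and $L_1\oplus L_1$ is impossible since a $2$-dimensional anti-commutative algebra is abelian or has the $1$-dimensional ideal $\mathrm{im}(L_1\times L_1\to L_1)$ and hence is never simple. So I would classify $4$-dimensional simple IAC algebras, first over $\overline{\F_p}$. The automorphism group $G$ is irreducible on $\overline{\F_p}^4$ and preserves the product; no connected reductive group has a $4$-dimensional irreducible representation carrying an invariant anti-commutative product (one checks $\SL_2$ on $\mathrm{Sym}^3$, $\SL_2\times\SL_2$, $\mathrm{Sp}_4$ and $\SL_4$ by Clebsch--Gordan), and a positive-dimensional identity component is likewise excluded—it would act irreducibly, or through a diagonal torus, or imprimitively, none of which preserves a non-zero product—so $G$ is finite. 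A classification of the finite irreducible subgroups of $\GL_4(\overline{\F_p})$ preserving a non-zero anti-commutative product then shows $G=F_{20}$, the Frobenius group $C_5\rtimes C_4$, acting through its unique rational $4$-dimensional irreducible module $W$ (with $\dim\mathrm{Hom}_{F_{20}}(\wedge^2W,W)=1$ supplying the product), and that the resulting algebra $\bar L$ is unique up to isomorphism. If $p=5$ there is no faithful $4$-dimensional $F_{20}$-module, so no such algebra exists—case (i). If $p\ne5$, then $\bar L$ is defined over $\F_p$ and its $\F_q$-forms are classified by $H^1(\mathrm{Gal}(\overline{\F_q}/\F_q),F_{20})$, i.e.\ the five conjugacy classes of $F_{20}$ (the Galois action is trivial, all of $F_{20}$ being rational); the form $L_c$ has $\Aut_{\F_q}(L_c)=C_{F_{20}}(c)$ and is IAC exactly when this centralizer acts irreducibly on $\F_q^4$. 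For $c=1$ the centralizer is $F_{20}$, always irreducible; for $c$ of order $2$ or $4$ it is $C_4$, acting as the regular representation and never irreducible; for $c$ of order $5$ it is $C_5$, acting as the sum of the four non-trivial characters and irreducible iff $\Phi_5$ is irreducible over $\F_q$, i.e.\ iff $q$ is a primitive root mod $5$, i.e.\ iff $f$ is odd and $p\equiv\pm2\pmod5$. This gives $2$ isomorphism types in that case and $1$ otherwise, proving (ii) and (iii). The hard part will be this classification—identifying $G$ with $F_{20}$ and proving $\bar L$ unique, by controlling the finite irreducible subgroups of $\GL_4$ and their invariant anti-commutative products, and handling the characteristic-$5$ degeneration; once minimal ideals, the bilinear form of a $3$-dimensional anti-commutative algebra, and Clebsch--Gordan are available, (a), (b) and (d) are comparatively routine.
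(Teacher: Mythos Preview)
Parts (a), (b) and (d) follow the paper's line closely. Your minimal-ideal argument is the content of Theorem~\ref{SimpleSum}; your volume-form treatment of the $3$-dimensional case is exactly the coordinate-free version of the matrix $A$ and the skew part $J=A-A^t$ in Proposition~\ref{prop:51}; and your Clebsch--Gordan argument for~$\SL(2,\F)$ is Theorem~\ref{T:CG}(b) together with Theorem~\ref{T:GammaL}. Nothing to add there.

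Part~(c) is where you diverge from the paper. The paper does \emph{not} pass to $\overline{\F_p}$ and use Galois cohomology; it works directly over $\F_q$, quoting \cite{ucs}*{Theorems~16--17} for the classification of irreducible ESQ subgroups of $\GL(4,q)$ (this is where $\AGL(1,5)$ and the containment $C_5\le A\le\AGL(1,5)$ come from), and then counts algebras by counting orbits of the group $\F_{q^4}^\times\rtimes\mathrm{Gal}(\F_{q^4}/\F_q)$ on the relevant complements $N_w$ inside $V\oplus\wedge^2V$. Your cohomological repackaging---forms of $\bar L$ indexed by conjugacy classes of $F_{20}$, with $\Aut_{\F_q}(L_c)=C_{F_{20}}(c)$---is elegant and does give the same counts once the premises are in place, and your character computation $\dim\mathrm{Hom}_{F_{20}}(\wedge^2W,W)=1$ correctly pins down uniqueness of $\bar L$.

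The genuine gap is the step ``a classification of the finite irreducible subgroups of $\GL_4(\overline{\F_p})$ preserving a non-zero anti-commutative product then shows $G=F_{20}$''. This is precisely the hard content that the paper imports from \cite{ucs}*{Theorem~16}, and you have not supplied it. Ruling out connected reductive groups by checking $\SL_2$, $\SL_2\times\SL_2$, $\mathrm{Sp}_4$, $\SL_4$ is fine, and your remark that a positive-dimensional identity component must act irreducibly (Clifford plus Lie--Kolchin) is recoverable; but the \emph{finite} case is not short---one must exclude, e.g., $2.A_5$, $2.A_6$, $2.A_7$, $\mathrm{Sp}_4(3)$ and the extraspecial-normaliser candidates in degree~$4$, and this is exactly what \cite{ucs} does. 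Your characteristic-$5$ argument also inherits this gap: saying ``$F_{20}$ has no faithful $4$-dimensional module in characteristic~$5$'' only finishes the job once you know $\Aut(\bar L)$ would have to contain $F_{20}$, and that identification was never established in characteristic~$5$. The paper instead appeals directly to \cite{ucs}*{Theorem~16} to conclude that no irreducible ESQ subgroup of $\GL(4,q)$ exists when $5\mid q$. If you want to keep the cohomological approach, you should either reproduce the finite-subgroup classification or cite it, and treat $p=5$ separately via the same citation.
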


Theorem~\ref{main2} follows from Theorem~\ref{SimpleSum}, Proposition~\ref{prop:51}, Theorem~\ref{th:52}, and Theorem~\ref{T:GammaL}. Note that
$p^f\equiv\pm2\pmod5$ holds if and only if $f$ is odd and $p\equiv\pm2\pmod5$.

UCS $p$-groups are atypical, since
finite $p$-groups usually have a rich structure of characteristic
subgroups, which is used in the optimization of the
standard algorithms for computing $\Aut(G)$; see for instance~\cites{el-gob,wilsonchar}.
However, when $G$ does not have characteristic subgroups apart from the
usual verbal subgroups, such optimization techniques fail.
In particular, the algorithms for
computing $\Aut(G)$ that are based on orbit-stabilizer calculation
would perform poorly on UCS $p$-groups. Nevertheless, Theorem~\ref{main1}
shows that
UCS $p$-groups of exponent $p^2$ have a rigid algebraic structure that,
in theory, can be exploited also in automorphism group computations, in a
way which is rather similar to the  philosophy pursued by
J.~B.~Wilson and
his collaborators in their recent work~\cites{bwtrans,bmwgenus,wilsonauts}.

Following~\cite{ucs}, we say that a $G$-module $V$ is an ESQ module
if $V\cong\wedge^2 V/U$ for some $G$-submodule $U$
(see also Section~\ref{subsec:4dim}).
IAC algebras are related to irreducible ESQ $G$-modules. 
It was observed in~\cite{eqqlie}
that a similar connection exists between Lie algebra modules
that satisfy the corresponding property and IAC algebras. Indeed,
the authors of~\cite{eqqlie}  considered the exterior squares  of
some irreducible representations of the Lie algebra
$\mathfrak{sl}(2)$ in the same way as we consider
such representations for $\GL(2,\F)$ in Theorem~\ref{T:CG}
and they defined non-associative anti-commutative algebras as we do
in Theorem~\ref{T:GammaL}. In addition, they explicitly computed the structure
constant tables for the resulting 3, 7, and 11-dimensional algebras, noting
that the 3-dimensional algebra is the simple Lie algebra $\mathfrak{sl}
(2)$ (as implied also by our Proposition~\ref{prop:51}), while
  the 7-dimensional algebra is the non-Lie simple Malcev algebra.
  This fact is further exploited in~\cite{BD} for the study of the polynomial
  identities of the Malcev algebra.

In Section~\ref{sec2}, we review some known properties of UCS $p$-groups
before we establish in Section~\ref{sec3} a duality
between the class of UCS $p$-groups of exponent $p^2$, and the class of
IAC algebras. In Section~\ref{sec4} we prove that IAC algebras are semisimple,
and that their subalgebras correspond to powerful subgroups of UCS groups.
We address the classification of simple IAC algebras of dimension at most~4
in~Section~\ref{sec5}, and an infinite class of examples of simple IAC
algebras, with widely varying dimensions, is given in Section~\ref{sec:rdim}.
We prove a general version, which is valid also in prime characteristic,
of the Clebsch-Gordan decomposition
for tensor squares, exterior squares and symmetric squares of $\GL(2,\F)$-modules
in Section~\ref{sec:CG}, and apply it in Section~\ref{Simple} 
to construct some simple IAC algebras related to representations of
$\textup{SL}(2,\F)$.

\section{UCS \texorpdfstring{$p$}{}-groups of exponent \texorpdfstring{$p^2$}{}}\label{sec2}

Let us start by recalling some standard notions of finite group theory.
If $G$ is a group and $k$ is an integer, then $G^k$ denotes the
subgroup of $G$ generated by $k$-th powers. To avoid confusion, the $k$-fold
direct power of $G$ is denoted by $G^{\times k}$.
The commutator subgroup $G'$ of $G$
is the subgroup generated by all commutators $[x,y]=x^{-1}y^{-1}xy$
with $x,\ y\in G$. The Frattini subgroup $\Phi(G)$ is the
intersection of all the maximal subgroups of $G$. It is well-known that if $G$
is a finite $p$-group, then $\Phi(G)=G'G^p$. The center $\Z(G)$ of $G$
is the subgroup of $G$ consisting of elements $g\in G$ such that
$gx=xg$ holds for all $x\in G$. 

A subgroup $N$ of  a group $G$ is called \emph{characteristic}
if it is invariant under each automorphism $\alpha\in\Aut(G)$.
For instance, the subgroups $G'$, $G^k$, $\Phi(G)$ are characteristic.
A group $G$ with
a {\bf u}nique non-trivial proper {\bf c}haracteristic {\bf s}ubgroup
is abbreviated a \emph{UCS group}. Finite UCS groups were
studied by Taunt~\cite{taunt} and later finite UCS $p$-groups were
explored by the first and the third authors in collaboration with
P.\ P.\ P\'alfy~\cite{ucs}.  The characteristic subgroups of
a finite UCS $p$-group $G$ are
$1$, $\Phi(G)$, and $G$, and consequently the exponent of $G$ is
either~$p$ or~$p^2$.

It is useful to review some properties of UCS groups.

\begin{lemma}\label{lemma1}
\textup{\cite{ucs}*{Lemma~3}.}
Suppose that $G$ is a finite
non-abelian UCS $p$-group and $1\lhd N\lhd G$ are the only
characteristic subgroups of $G$. Then the following hold:
\begin{enumerate}[{\rm (a)}]
  \item $G'=\Phi(G)=\Z(G)=N$ and $G^p=1$ or $G^p=N$;
  \item the groups $G/N$ and $N$ are elementary abelian $p$-groups.
\end{enumerate}
\end{lemma}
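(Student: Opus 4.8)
The plan is to derive all of Lemma~\ref{lemma1} from standard facts about finite $p$-groups together with the hypothesis that $N$ is the \emph{only} non-trivial proper characteristic subgroup of $G$, so that the complete list of characteristic subgroups is $\{1,N,G\}$ with $1\neq N\neq G$. The standard facts I would invoke are: in a finite $p$-group the subgroups $G'$, $G^p$, $\Phi(G)=G'G^p$, and $\Z(G)$ are characteristic; $G'\le\Phi(G)<G$; $\Z(G)\neq1$ whenever $G\neq1$; $G/\Phi(G)$ is elementary abelian; and $M^p\le\Phi(M)<M$ for any non-trivial finite $p$-group $M$.

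First I would identify the three named subgroups in part~(a). Since $G$ is non-abelian, $G'$ is a non-trivial characteristic subgroup, and it is proper because $G$ is a finite $p$-group; hence $G'=N$. Then $\Phi(G)$ is characteristic, proper, and contains $G'=N$, so $\Phi(G)=N$. Similarly $\Z(G)$ is non-trivial (as $G$ is a non-trivial $p$-group) and proper (as $G$ is non-abelian), so $\Z(G)=N$. For the power subgroup, $G^p\le\Phi(G)=N<G$, and $G^p$ is characteristic, so the UCS hypothesis leaves only the possibilities $G^p=1$ or $G^p=N$. This establishes part~(a).

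For part~(b), the quotient $G/N=G/\Phi(G)$ is elementary abelian by the standard description of the Frattini quotient of a finite $p$-group. For $N$ itself, abelianness is immediate from $N=\Z(G)$; to control the exponent I would observe that $N^p$ is characteristic in $N$ and $N$ is characteristic in $G$, hence $N^p$ is characteristic in $G$, and since $N$ is a non-trivial finite $p$-group we have $N^p\le\Phi(N)<N$, ruling out $N^p=N$ and, trivially, $N^p=G$. Therefore $N^p=1$, so $N$ has exponent $p$ and part~(b) follows.

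There is no genuine obstacle here: the whole argument is a matter of writing down the correct short list of characteristic subgroups and intersecting it with the containments $G'\le\Phi(G)$, $G^p\le\Phi(G)$ and $N^p<N$. The only points that merit a word of justification are the transitivity of the \emph{characteristic} relation, used to see that $N^p$ is characteristic in $G$ (an automorphism of $G$ restricts to an automorphism of $N$ and hence fixes the verbal subgroup $N^p$), and the elementary fact that a non-trivial finite $p$-group $M$ satisfies $M^p\neq M$.
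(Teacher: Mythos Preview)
Your proof is correct. The paper does not supply its own proof of this lemma; it is quoted verbatim from \cite{ucs}*{Lemma~3} without argument, so there is nothing to compare against beyond noting that your reasoning is exactly the standard one and would be accepted without change.
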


\begin{lemma}\label{lemma2}
\textup{\cite{ucs}*{Theorem~4}.}
Let $G$ be a finite $p$-group such that $G/\Phi(G)$ and $\Phi(G)$
are non-trivial
elementary abelian $p$-groups. Then $G$ is a UCS group if and only if $\Aut(G)$
induces an irreducible linear group on both $G/\Phi(G)$ and on $\Phi(G)$.
\end{lemma}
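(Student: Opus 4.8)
The plan is to prove the two implications separately, relying throughout on three elementary facts: $\Phi(G)$ is characteristic in $G$; the subgroups of the elementary abelian group $\Phi(G)$ are precisely its $\F_p$-subspaces; and $\Phi(G)$ consists of non-generators, so that $\langle S,\Phi(G)\rangle=G$ forces $\langle S\rangle=G$. Note first that the hypotheses $\Phi(G)\ne 1$ and $G/\Phi(G)\ne 1$ say exactly that $\Phi(G)$ is a non-trivial proper characteristic subgroup of $G$; hence if $G$ is UCS then $\Phi(G)$ is \emph{the} unique non-trivial proper characteristic subgroup, and the only characteristic subgroups of $G$ are $1$, $\Phi(G)$ and $G$.

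For the forward implication, suppose $G$ is UCS. Any $\Aut(G)$-invariant subspace $U$ of $\Phi(G)$ is invariant under every automorphism of $G$, hence characteristic in $G$; thus $U\in\{1,\Phi(G)\}$, so $\Aut(G)$ is irreducible on $\Phi(G)$. Likewise, if $W/\Phi(G)$ is an $\Aut(G)$-invariant subspace of $G/\Phi(G)$, then, $\Phi(G)$ being characteristic, its full preimage $W$ in $G$ is characteristic; thus $W\in\{\Phi(G),G\}$, so $\Aut(G)$ is irreducible on $G/\Phi(G)$.

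For the converse, assume $\Aut(G)$ is irreducible on both sections, and let $N$ be any non-trivial proper characteristic subgroup of $G$; it suffices to show $N=\Phi(G)$. Since $N\cap\Phi(G)$ is characteristic in $G$ and lies in $\Phi(G)$, it is an $\Aut(G)$-invariant subspace of $\Phi(G)$, hence equals $1$ or $\Phi(G)$. If $N\cap\Phi(G)=\Phi(G)$, then $\Phi(G)\le N$ and $N/\Phi(G)$ is an $\Aut(G)$-invariant subspace of $G/\Phi(G)$, so $N\in\{\Phi(G),G\}$; as $N\ne G$ this gives $N=\Phi(G)$. If instead $N\cap\Phi(G)=1$, consider $N\Phi(G)$, which is a subgroup because $\Phi(G)$ is normal in $G$: its image $N\Phi(G)/\Phi(G)$ in $G/\Phi(G)$ is $\Aut(G)$-invariant, hence $1$ or $G/\Phi(G)$. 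In the former case $N\le\Phi(G)$, so $N=N\cap\Phi(G)=1$, a contradiction; in the latter case $N\Phi(G)=G$, so the non-generator property of $\Phi(G)$ forces $N=G$, again a contradiction. Hence $N=\Phi(G)$, and $G$ is UCS.

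The whole argument is a routine translation between "characteristic subgroup of $G$" and "$\Aut(G)$-invariant subspace of a section", so I do not anticipate a genuine obstacle. The one step requiring a little care is the case $N\cap\Phi(G)=1$ of the converse: there irreducibility of $G/\Phi(G)$ alone does not suffice, and one has to invoke the non-generator property of $\Phi(G)$ to rule out $N\Phi(G)=G$ with $N\ne G$.
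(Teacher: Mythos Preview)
Your proof is correct. The paper itself does not supply a proof of this lemma; it is quoted from~\cite{ucs}*{Theorem~4} and stated without argument, so there is no in-paper proof to compare against. Your argument is the standard elementary one: translate between ``characteristic subgroup of $G$'' and ``$\Aut(G)$-invariant subspace of the relevant section,'' using that $\Phi(G)$ is characteristic and consists of non-generators. Every step is sound, including the case split in the converse and the use of the non-generator property to dispose of $N\Phi(G)=G$ when $N\cap\Phi(G)=1$.
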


Consider, for a group $G$, the natural homomorphism $G\to G/\Phi(G)$.
For $g\in G$ and for $H\le G$ we define
$$
\overline{g}=g\Phi(G)\quad\mbox{and}\quad \overline{H}=H\Phi(G)/\Phi(G).
$$
In particular, 
$\overline{G}=G/\Phi(G)$.

\begin{lemma}\label{lemma3}
\textup{\cite{ucs}*{Theorem~4(iii)}.}
Let $G$ be a finite UCS $p$-group of odd exponent $p^2$. Then the map
$\varphi\colon \overline G\to\Phi(G)$ defined by $\overline{g}\varphi=g^p$
is a well-defined  isomorphism
between the $\F_p\Aut(G)$-modules $\overline G$ and $\Phi(G)$. In particular,
$|\overline G|=|\Phi(G)|$.
\end{lemma}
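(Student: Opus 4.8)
The plan is to realise $\varphi$ as the map induced on $G/\Phi(G)$ by the $p$th power endomorphism of $G$. Concretely, I would consider $\pi\colon G\to G$ defined by $\pi(g)=g^p$, show that $\pi$ is a group homomorphism with image $\Phi(G)$ and kernel $\Phi(G)$, deduce that the induced map $G/\Phi(G)\to\Phi(G)$ is an isomorphism of groups, and finally note that it intertwines the $\Aut(G)$-actions. Before starting I would record the structural facts needed: since $G$ is UCS, Lemma~\ref{lemma1} gives $G'=\Phi(G)=\Z(G)=N$ with $N$ elementary abelian, and since the exponent of $G$ is $p^2$ (not $p$) we also have $G^p=N$; in particular $G'\le\Z(G)$, so $G$ has nilpotency class at most $2$. (If $G$ happens to be abelian these remarks are trivial, with $\Phi(G)=G^p$.)

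The key step is to show that $\pi$ is multiplicative. For this I would use the commutator–power identity $(xy)^n=x^ny^n[y,x]^{\binom n2}$, valid in any group of class at most $2$ (a short induction on $n$, using $[y^n,x]=[y,x]^n$). Taking $n=p$, the correction term is $[y,x]^{\binom p2}$; here the hypothesis that $p$ is \emph{odd} is exactly what is needed, since then $p\mid\binom p2=\tfrac{p(p-1)}2$, and as $[y,x]\in G'=N$ has order dividing $p$ the correction term is trivial. Hence $(xy)^p=x^py^p$ for all $x,y\in G$, so $\pi\in\mathrm{End}(G)$. I expect this to be the only genuinely delicate point of the proof: for $p=2$ the term $[y,x]$ survives and the analogous statement is false.

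Next I would compute the image and kernel of $\pi$. Being a homomorphism, $\pi$ has $\im\pi$ a subgroup of $G$ containing all $p$th powers, hence $\im\pi=G^p=N=\Phi(G)$. The kernel $\ker\pi=\{g\in G:g^p=1\}$ is a characteristic subgroup of $G$; it contains $N$ (as $N$ has exponent $p$) and it is proper (as $G$ has exponent $p^2$), so by the UCS property — whose only characteristic subgroups are $1$, $N$, $G$ — it equals $N=\Phi(G)$. This is where the UCS hypothesis does its work. Therefore $\pi$ induces an isomorphism $G/\Phi(G)\to\Phi(G)$ of abelian groups, and since $g\Phi(G)\mapsto g^p$ is precisely this induced map, $\varphi$ is well defined and bijective; in particular $|\overline G|=|\Phi(G)|$.

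It remains to upgrade this to an isomorphism of $\F_p\Aut(G)$-modules. For any $\alpha\in\Aut(G)$ we have $(g\alpha)^p=(g^p)\alpha$, so $\pi$, and hence $\varphi$, commutes with the $\Aut(G)$-actions on $\overline G$ and $\Phi(G)$. Since $\overline G$ and $\Phi(G)$ are elementary abelian $p$-groups, they are $\F_p$-vector spaces on which $\Aut(G)$ acts linearly, and an additive bijection between them that is $\Aut(G)$-equivariant is automatically an $\F_p\Aut(G)$-module isomorphism. This completes the plan.
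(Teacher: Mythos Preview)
Your proof is correct. The paper itself does not supply a proof of this lemma but merely quotes it from \cite{ucs}*{Theorem~4(iii)}; your argument via the $p$th-power endomorphism, the class-$2$ identity $(xy)^p=x^py^p[y,x]^{\binom p2}$, and the UCS constraint on the characteristic subgroup $\ker\pi$ is exactly the standard route and matches how the result is established in the cited source.
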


Suppose that $G$ is a UCS $p$-group of odd exponent $p^2$ and let $g\in G$.
By Lemma~\ref{lemma3}, defining $\overline g^p=\overline g\varphi=g^p$,
we obtain a well-defined
isomorphism between the $\F_p\Aut(G)$-modules $\overline G$ and $\Phi(G)$.
If $h\in\Phi(G)$, then
let $h^{1/p}$  denote the unique preimage $h\varphi^{-1}\in\overline G$ of $h$
under $\varphi$. Thus
for every $\overline{g}\in \overline{G}$ and $h\in \Phi(G)$ we have
\begin{equation}\label{E:1}
  (\overline g^p)^{1/p}=\overline g\quad\mbox{and}\quad
  (h^{1/p})^p=h.
\end{equation}

\begin{theorem}\label{dirprodUCS}
\textup{\cite{taunt}*{Theorem~2.1}.}
Suppose that $G$ is a UCS group, and $N$ is its proper 
non-trivial characteristic subgroup. Assume further that
$\Aut(G)$ fixes no non-trivial element of $N\cup G/N$. 
Then for all $k\geq 1$ the $k$-th direct power $G^{\times k}$ 
is a UCS group, and $N^{\times k}$ is a characteristic subgroup of $G^{\times k}$.
\end{theorem}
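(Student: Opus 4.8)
The plan is to verify the two hypotheses of Lemma~\ref{lemma2} for the direct power $G^{\times k}$, applied to the subgroup $N^{\times k}$, and then to check that $N^{\times k}$ is indeed characteristic. First I would record the elementary facts that $\Phi(G^{\times k}) = \Phi(G)^{\times k} = N^{\times k}$ and $(G/N)^{\times k} \cong G^{\times k}/N^{\times k}$, so that as $\F_p\langle\Aut(G)^{\times k}\rangle$-modules we have $G^{\times k}/\Phi(G^{\times k}) \cong (G/N)^{\oplus k}$ and $\Phi(G^{\times k}) \cong N^{\oplus k}$; in particular both quotients are non-trivial elementary abelian $p$-groups, so the standing hypothesis of Lemma~\ref{lemma2} holds. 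By Lemma~\ref{lemma2} it then suffices to show that $\Aut(G^{\times k})$ acts irreducibly on each of $(G/N)^{\oplus k}$ and $N^{\oplus k}$.

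The key point is that $\Aut(G^{\times k})$ contains the "diagonal-plus-permutation" subgroup $\Aut(G) \wr \Sym_k$: each automorphism of $G$ applied coordinatewise lies in $\Aut(G^{\times k})$, and permuting the $k$ coordinates is an automorphism of $G^{\times k}$ since the factors are all isomorphic. (One must note that these maps genuinely preserve $G^{\times k}$; the coordinate permutations do because they shuffle identical copies of $G$.) It therefore suffices to prove that $M^{\oplus k}$ is an irreducible module for $\Aut(G)\wr\Sym_k$ whenever $M$ is one of $G/N$ or $N$, viewed as an irreducible $\F_p\Aut(G)$-module — irreducibility here is guaranteed by Lemma~\ref{lemma2} applied to $G$ itself. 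This is a standard module-theoretic fact: a non-zero $\Aut(G)\wr\Sym_k$-submodule $W$ of $M^{\oplus k}$ contains a non-zero element $(w_1,\dots,w_k)$; using $\Sym_k$ one can arrange a non-zero coordinate in the first position, and then projecting to that coordinate shows $W$ contains a non-zero element of $M\oplus 0\oplus\cdots\oplus 0$. Here the extra hypothesis on fixed points enters: because $\Aut(G)$ fixes no non-trivial element of $M$, the submodule of $M$ generated by that element under $\Aut(G)$ is all of $M$ (as $M$ is irreducible), so $M\oplus 0\oplus\cdots\oplus 0 \subseteq W$, and then applying the coordinate-permutations gives $0\oplus\cdots\oplus M\oplus\cdots\oplus 0\subseteq W$ in every slot, whence $W = M^{\oplus k}$. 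Actually the fixed-point hypothesis is not even needed for this last argument once $M$ is irreducible; its real role is historical/uniformity with Taunt's setting, and in our application it is automatic from Lemma~\ref{lemma1}(a) since $\Z(G)=N$ forces $\Aut(G)$ (in fact $\Inn(G)$) to have no non-trivial fixed points on $G/N$, and Lemma~\ref{lemma3} transports this to $N$.

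Finally I would check that $N^{\times k}$ is characteristic in $G^{\times k}$: having shown $\Phi(G^{\times k}) = N^{\times k}$, this is immediate since the Frattini subgroup is always characteristic. The main obstacle, such as it is, is purely bookkeeping: one must be careful that the automorphism group of a direct power is genuinely large enough to force irreducibility of $M^{\oplus k}$ — i.e.\ that the wreath product $\Aut(G)\wr\Sym_k$ really does sit inside $\Aut(G^{\times k})$ and that its action on $\Phi(G^{\times k})/1$ and $G^{\times k}/\Phi(G^{\times k})$ is the expected permutation-diagonal action. No delicate estimates or case analysis are required; the whole argument is a reduction to Lemma~\ref{lemma2} together with the elementary observation about irreducibility of $M^{\oplus k}$ under a wreath product.
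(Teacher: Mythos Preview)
The paper does not prove Theorem~\ref{dirprodUCS}; it is quoted from Taunt without proof, so there is no in-paper argument to compare against (the special case for UCS $p$-groups of exponent $p^2$ is handled later in Theorem~\ref{SumPower} via Theorem~\ref{th:wreath}(a)). Your sketch, however, has real problems. First, Theorem~\ref{dirprodUCS} is stated for arbitrary UCS groups, yet every tool you invoke---Lemmas~\ref{lemma1}, \ref{lemma2}, \ref{lemma3}, the identification $N=\Phi(G)$, the $\F_p$-module structure---is specific to finite $p$-groups. For a general UCS group none of this is available, so at best you are reproving the special case already covered by Theorem~\ref{SumPower}.

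More seriously, your irreducibility argument for $M^{\oplus k}$ is broken, and your claim that the fixed-point hypothesis is ``not even needed'' is false. From $(w_1,\dots,w_k)\in W$ with $w_1\ne0$ you cannot ``project to that coordinate'' and remain inside $W$; projection is not a submodule-preserving operation. The correct step is to act by some $g\in\Aut(G)$ in the first coordinate only and subtract, giving $(gw_1-w_1,0,\dots,0)\in W$, and this is non-zero precisely when $w_1$ is not fixed by $\Aut(G)$---which is exactly the hypothesis you tried to discard. Without it the conclusion fails: if $M$ is the trivial one-dimensional (hence irreducible) module then $M^{\oplus k}$ under $1\wr\Sym_k=\Sym_k$ is the permutation module and the diagonal is a proper invariant subspace. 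This is also why Theorem~\ref{th:wreath}(a) requires $A_1$ to be non-trivial. Incidentally, your parenthetical that $\Inn(G)$ already has no fixed points on $G/N$ is the opposite of the truth in the class-$2$ setting: since $G'\le\Z(G)=N$, inner automorphisms act \emph{trivially} on $G/N$.
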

  
\section{UCS \texorpdfstring{$p$}{}-groups and IAC algebras}\label{sec3}

We consider non-associative\footnote{Precisely, we assume that the binary operation $L\times L\to L$ may or may not be associative.} algebras
$L$ over a field $\F$ satisfying $xx=0$ for all $x\in L$. Our
focus will be when $\Char(\F)\ne2$. Such an algebra is called
\emph{anti-commutative} because $yx=-xy$ holds for all $x,y\in L$.
As these algebras are more similar to Lie algebras than to associative algebras,
we henceforth write the product $xy$ as $\prd{x}{y}$.
An anti-commutative algebra $L$ for which $\Aut(L)$ acts 
irreducibly on $L$ will be called {\em IAC algebra} (for {\bf i}rreducible
and {\bf a}nti-{\bf c}ommutative).

Let $p$ be an odd prime and let $G$ be a UCS $p$-group of exponent $p^2$.
By Lemma~\ref{lemma3}, the map $\overline G\rightarrow \Phi(G)$,
defined by $\overline x\mapsto x^p$ is
an isomorphism of $\Aut(G)$-modules.
Given $\lambda\in\F_p$ and $\overline x\in\overline G$, considering
$\lambda$ as an integer between $0$ and $p-1$, the scalar action
$\lambda\overline x=\overline {x^\lambda}$ is well-defined and turns
$\overline G$ into a vector space over $\F_p$. It is less obvious
that the following product $\prd{\overline x}{\overline y}$ 
for $\overline x,\ \overline y\in\overline G$ is well-defined:
\begin{equation}\label{estrela}
\prd{\overline x}{\overline y}=[x,y]^{1/p}\qquad\textup{where $x,\ y\in G$
and $[x,y]=x^{-1}y^{-1}xy$.}
\end{equation}
Henceforth $\iac G=(\overline G,+,\prd\cdot\cdot)$ will denote
this $\F_p$-algebra.

\begin{lemma}\label{ucstoiac}
  Suppose that $p$ is an odd prime and
  $G$ is a finite UCS $p$-group of exponent~$p^2$.
  Then $\iac G$ defined above is an IAC algebra over $\F_p$.
\end{lemma}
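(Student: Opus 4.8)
The plan is to verify two things: that the triple $\iac G=(\overline G,+,\prd\cdot\cdot)$ is a well-defined anti-commutative algebra over $\F_p$, and that $\Aut(\iac G)$ acts irreducibly on the underlying space $\overline G$. Everything will be deduced from Lemmas~\ref{lemma1}--\ref{lemma3}, so I would begin by assembling the precise facts those lemmas supply.

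Because $G$ has exponent $p^2$, Lemma~\ref{lemma1} gives $G'=\Phi(G)=\Z(G)=G^p$; in particular $G$ has nilpotency class at most $2$, and $\overline G=G/\Phi(G)$ is elementary abelian, so the scalar rule $\lambda\overline x=\overline{x^\lambda}$ really does make $\overline G$ an $\F_p$-vector space. Class-two-ness simplifies the commutator calculus: for $z,w\in\Phi(G)$ one has $[xz,yw]=[x,y]$, while $[xy,z]=[x,z][y,z]$ and $[x^\lambda,y]=[x,y]^\lambda$, and every commutator lies in $G'=\Phi(G)$. From Lemma~\ref{lemma3}, the map $\varphi\colon\overline G\to\Phi(G)$, $\overline g\mapsto g^p$, is an isomorphism of $\F_p\Aut(G)$-modules; hence $\varphi^{-1}=(\,\cdot\,)^{1/p}$ is an $\F_p$-linear bijection carrying the group $(\Phi(G),\cdot)$ onto $(\overline G,+)$, and it is $\Aut(G)$-equivariant: $(h\alpha)^{1/p}=(h^{1/p})\alpha$ for all $\alpha\in\Aut(G)$ and $h\in\Phi(G)$.

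With these in hand the algebra axioms are routine. Well-definedness of $\prd\cdot\cdot$ in~\eqref{estrela} holds because $[x,y]$ is unchanged if $x$ and $y$ are multiplied by elements of $\Phi(G)$, and $[x,y]\in\Phi(G)$ so that $[x,y]^{1/p}\in\overline G$ is defined. Left $\F_p$-bilinearity follows by feeding $[xy,z]=[x,z][y,z]$ and $[x^\lambda,y]=[x,y]^\lambda$ through the linear map $\varphi^{-1}$; right bilinearity follows either symmetrically or from anti-commutativity, which is itself immediate from $\prd{\overline x}{\overline x}=[x,x]^{1/p}=1^{1/p}=0$. Thus $\iac G$ is an anti-commutative $\F_p$-algebra.

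It remains to produce enough automorphisms. Each $\alpha\in\Aut(G)$ fixes the characteristic subgroup $\Phi(G)$, hence induces an $\F_p$-linear map $\overline\alpha$ on $\overline G$, and I would check $\overline\alpha\in\Aut(\iac G)$ by the computation
$$\prd{\overline x\,\overline\alpha}{\overline y\,\overline\alpha}=\prd{\overline{x\alpha}}{\overline{y\alpha}}=[x\alpha,y\alpha]^{1/p}=([x,y]\alpha)^{1/p}=([x,y]^{1/p})\overline\alpha=\prd{\overline x}{\overline y}\,\overline\alpha,$$
using that $\alpha$ is a homomorphism and the equivariance of $\varphi^{-1}$. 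So $\alpha\mapsto\overline\alpha$ sends the image of $\Aut(G)$ in $\GL(\overline G)$ into $\Aut(\iac G)$; since $G$ is UCS, Lemma~\ref{lemma2} says that image is already irreducible on $\overline G$, and therefore so is the possibly larger group $\Aut(\iac G)$. Hence $\iac G$ is an IAC algebra over $\F_p$. The verification is mostly bookkeeping; the one point that carries the argument — and the thing to be careful about — is that the ``$p$-th root'' map $\varphi^{-1}$ is at once $\F_p$-linear and $\Aut(G)$-equivariant, since well-definedness, bilinearity, and the passage from group automorphisms to algebra automorphisms are all obtained by transporting identities in $\Phi(G)$ across $\varphi^{-1}$.
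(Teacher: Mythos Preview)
Your proof is correct and follows essentially the same approach as the paper's: you verify well-definedness and bilinearity from class-2 commutator identities, anti-commutativity from $[x,x]=1$, and then push each $\alpha\in\Aut(G)$ through the $\Aut(G)$-equivariant isomorphism $\varphi^{-1}$ to obtain $\overline\alpha\in\Aut(\iac G)$, finishing with Lemma~\ref{lemma2}. The only cosmetic difference is that the paper phrases the equivariance as the intertwining relation $\varphi(\alphadown)=\overline\alpha\varphi$, whereas you state it directly for $\varphi^{-1}$; these are equivalent.
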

\begin{proof}
  We first prove that the operation $\prd\cdot\cdot$ given
  by equation~\eqref{estrela} is well-defined.
  Let $v_1,\ v_2,\ w_1,\ w_2 \in G$ such that
  $\overline v_1=\overline v_2$ and $\overline w_1=\overline w_2$.
  Then $v_2=v_1z_v$ and $w_2=w_1z_w$ with $z_v,\ z_w\in\Phi(G)$. 
  As $z_v$ and $z_w$ are central, and $G$ has nilpotency class~2, we
  obtain that $[v_2,w_2]=[v_1,w_1]$, and so 
$[v_2,w_2]^{1/p}=[v_1,w_1]^{1/p}$.  Thus the value of $\prd\cdot\cdot$
  is independent of the choice of coset representatives, and
  $\prd\cdot\cdot$ is well-defined.

  We prove next that $\prd\cdot\cdot$ distributes over the addition
  in $\overline G$.  Take $u, v, w\in G$ and note that $[u,vw]=[u,v][u,w]$ holds since $G$ has nilpotency class $2$. Therefore
\[
  \prd {\overline u} {\overline v+\overline w}=[u,vw]^{1/p}= ([u,v][u,w])^{1/p}=\\{}[u,v]^{1/p}[u,w]^{1/p}=  \prd uv+\prd uw.
\]

In addition,  $\prd{\overline v}{\overline v}=[v,v]^{1/p}= 1^{1/p}$, 
equals the zero element of $\overline G$.
  Hence $\prd{\overline v}{\overline v}=0$ for all $\overline v\in\overline G$.
  An automorphism $\alpha\in\Aut(G)$ induces an 
  $\F_p$-linear transformation $\overline\alpha\colon \overline G\rightarrow \overline G$ and a (restricted) isomorphism $\alphadown\colon \Phi(G)\rightarrow\Phi(G)$. 
Then, by Lemma~\ref{lemma3}, the map $\varphi\colon\overline x\mapsto x^p$
intertwines $\alphadown$ and ${\overline\alpha}$, i.e.
\begin{equation}\label{phialphaintertw}
  \varphi(\alphadown)={\overline\alpha}\varphi\qquad\textup{or}\qquad
  (\alphadown)\varphi^{-1}=\varphi^{-1}{\overline\alpha}.
\end{equation}
Hence
\begin{align*}
  \prd{\overline{u}}{\overline{v}}\overline{\alpha}
  &=[u,v]^{1/p}\overline{\alpha}
  =[u,v]\varphi^{-1}\overline{\alpha}
  =[u,v](\alphadown)\varphi^{-1}
  =[u\alpha,v\alpha]\varphi^{-1}\\
  &=[u\alpha,v\alpha]^{1/p}
  =\prd{\overline{u}\,\overline{\alpha}}{\overline{v}\,\overline{\alpha}}.
\end{align*}
Thus $\overline{\alpha}\in\Aut(\iac G)$. By Lemma~\ref{lemma2},
$\Aut(G)$ induces an irreducible subgroup of $\GL(\overline G)$.
Thus $\Aut(\iac G)$  is irreducible on $\overline G$,
and $\iac G$ is an IAC algebra as claimed.
\end{proof}

In characteristic~2, we need not have $|G/\Phi(G)|=|\Phi(G)|$ as
in Lemma~\ref{lemma3}. For example, a non-abelian group $G$ of order 8 is
UCS and satisfies
$|G/\Phi(G)|=4>2=|\Phi(G)|$.
Although our construction of
$\iac G$ works more generally, namely when $p>2$ and $G$ satisfies
$1<\Phi(G)\le\Z(G)$, $|\overline G|=|\Phi(G)|$, and  $G^{p^2}=1$, the resulting anti-commutative algebra
$\iac G$ need not be an irreducible $\Aut(L)$-module when $G$ is not
a UCS group.

Given an algebra $L$, the subspace $\prd LL$ generated by all
products $\prd xy$ with $x,\ y\in L$ is invariant under
$\Aut(L)$. Thus if $L$ is an IAC algebra, we have
$\prd LL=0$ or $\prd LL=L$. If $L=\iac G$ for some finite UCS $p$-group $G$
of exponent $p^2$, then $\prd LL=0$ occurs if and only if $G$ is abelian.
We will usually assume that this is not the case; that is,
$\prd LL=L$ holds.
Adopting the terminology from Lie algebras, an anti-commutative
algebra $L$ is said to be {\em abelian} if $\prd LL=0$.

The following result shows that the construction in Lemma~\ref{ucstoiac}  
can be reversed.

\begin{theorem}\label{prop:iac2ucs} Given an odd prime $p$ and a finite-dimensional IAC algebra
  $L$ over~$\F_p$, 
  there exists a finite UCS  $p$-group $G=\mathcal{G}(L)$ of exponent $p^2$
  such that $\iac G\cong L$.
\end{theorem}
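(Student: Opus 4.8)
The plan is to realise $G=\grp L$ as a suitable finite central extension. View $L$ as an $n$-dimensional $\F_p$-vector space, where $n=\dim_{\F_p}L$, and seek a group $G$ fitting in a central extension $1\to L\to G\to L\to1$ in which the left-hand copy of $L$ is $\Phi(G)=G^p=G'$, the quotient is $\overline G$, the commutator map $\overline G\times\overline G\to\Phi(G)$ is $(\overline x,\overline y)\mapsto\prd xy$, and the $p$-th power map $\overline x\mapsto x^p$ is (under these identifications) the identity of $L$. For $p$ odd, a central extension of an elementary abelian group by another is determined up to equivalence by its commutator pairing (an alternating bilinear map) together with its power map (a linear map), and these may be prescribed freely and independently; this is the standard computation of the relevant second cohomology group. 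Hence $G$ exists; explicitly $G=(L\times L,\,\cdot)$ with $(a,s)(b,t)=(a+b,\,s+t+f(a,b))$, where $f=f_0+h$, $f_0\colon L\times L\to L$ is any bilinear map with $f_0(a,b)-f_0(b,a)=\prd ab$, and $h$ is the symmetric normalised ``carry'' cocycle of the extension $0\to p(\mathbb Z/p^2\mathbb Z)^n\to(\mathbb Z/p^2\mathbb Z)^n\to\F_p^n\to0$. This $G$ has order $p^{2n}$ and exponent dividing $p^2$.

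Next I would read off the structure of $G$ from this multiplication. One computes $[(a,s),(b,t)]=(0,\prd ab)$ and $(a,s)^p=(0,a)$, so $Z:=\{0\}\times L=G^p=G'$ is central and elementary abelian, $G$ has class at most $2$, $\Phi(G)=G'G^p=Z$, and $\overline G=G/Z$ is elementary abelian of rank $n$; moreover $\Z(G)=Z$ unless $L$ is abelian, since the left radical $\{a\in L:\prd aL=0\}$ is an $\Aut(L)$-submodule of $L$ and hence is $0$ or $L$ (in the abelian case $G\cong(\mathbb Z/p^2\mathbb Z)^n$ and the conclusions below still hold). The power map $\varphi\colon\overline G\to\Phi(G)$, $\overline{(a,s)}\mapsto(a,s)^p=(0,a)$, is then a well-defined $\Aut(G)$-equivariant $\F_p$-isomorphism, exactly the analogue for $G$ of the map in Lemma~\ref{lemma3}; in particular $\overline G\cong\Phi(G)$ as $\Aut(G)$-modules and, as $\varphi\ne0$, the exponent of $G$ is exactly $p^2$. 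Identifying $\overline G$ with $L$ via $\overline{(a,s)}\mapsto a$ and unwinding the definition~\eqref{estrela}, one gets $\prd{\overline{(a,s)}}{\overline{(b,t)}}=[(a,s),(b,t)]^{1/p}=\varphi^{-1}(0,\prd ab)$, which corresponds to $\prd ab$; so this identification is an isomorphism $\iac G\cong L$.

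The remaining task is to show $G$ is a UCS group. By Lemma~\ref{lemma2} this will follow once $\Aut(G)$ is known to act irreducibly on $\overline G$, since via $\varphi$ it then acts irreducibly on $\Phi(G)$ as well. The computation in the proof of Lemma~\ref{ucstoiac} applies to $G$ verbatim, so the image of the map $\Aut(G)\to\GL(\overline G)$ lies in $\Aut(\iac G)\cong\Aut(L)$; as $L$ is an IAC algebra, it therefore suffices to lift every $\theta\in\Aut(L)$ to an automorphism of $G$. To do this, regard $\theta\in\GL(\overline G)=\GL(L)$ and note $\prd{a\theta}{b\theta}=(\prd ab)\theta$. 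The cocycles $(a,b)\mapsto f(a\theta,b\theta)$ and $(a,b)\mapsto f(a,b)\theta$ define central extensions of $L$ by $L$ with the same power map, namely $\theta$, and with commutator pairings $\prd{a\theta}{b\theta}$ and $(\prd ab)\theta$ respectively, which coincide precisely because $\theta\in\Aut(L)$. Hence these cocycles are cohomologous, say $f(a\theta,b\theta)-f(a,b)\theta=g(a+b)-g(a)-g(b)$ for some $g\colon L\to L$, and then $(a,s)\mapsto(a\theta,\,s\theta+g(a))$ is an automorphism of $G$ inducing $\theta$ on $\overline G$. So the image of $\Aut(G)$ in $\GL(\overline G)$ contains $\Aut(L)$, hence is irreducible on $\overline G\cong L$, and $G=\grp L$ is a UCS $p$-group of exponent $p^2$ with $\iac G\cong L$.

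The step I expect to require the most care is making the power map an isomorphism rather than zero: a bilinear cocycle would give $(a,s)^p=(0,0)$ and exponent $p$, so one must genuinely use the ``power'' part $\textup{Hom}(L,L)$ of the cohomology group---this is where $p$ being odd is used, beyond generalities about central extensions---and the same cohomological bookkeeping then yields the lifting of $\Aut(L)$.
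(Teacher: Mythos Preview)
Your argument is correct and complete, but it takes a genuinely different route from the paper's. The paper constructs $\grp L$ as a quotient $H/N$ of the relatively free $r$-generator group $H=H_{p,r}$ in the variety of class-$\le 2$ groups of exponent dividing $p^2$ with central $p$-th powers, where $N$ is generated by the elements $[h_i,h_j]^{-1}\prod_k (h_k^p)^{c_k^{(i,j)}}$ encoding the structure constants of $L$; the lift of each $\alpha\in\Aut(L)$ to $\Aut(G)$ then comes for free from the universal property of $H$, and a commutative-diagram chase shows $N$ is $\alpha^*$-invariant. By contrast, you build $G$ directly as $L\times L$ with an explicit $2$-cocycle $f=f_0+h$, verify the commutator and power formulas by hand, and obtain the lift of $\Aut(L)$ from the decomposition $H^2(\F_p^n,\F_p^n)\cong\textup{Hom}(\wedge^2\F_p^n,\F_p^n)\oplus\textup{Hom}(\F_p^n,\F_p^n)$ valid for odd $p$. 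Your approach is more conceptual about \emph{why} the oddness of $p$ is essential (it is precisely what makes the power map a well-defined linear invariant of the cohomology class, separate from the commutator pairing), whereas the paper's construction is more concrete and feeds directly into the polycyclic presentation of Corollary~\ref{C:PCP}, which is used later in Theorem~\ref{th:bij}.
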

In Theorem~\ref{th:bij} we show that $\mathcal{G}=\mathcal{L}^{-1}$, see
Remark~\ref{R:iso}.
\begin{proof}
  Suppose that $L$ has (finite) dimension~$r$ over $\F_p$ and
  $\{v_1, \dots, v_r\}$ is a basis.
  If $L$ is abelian, then we take $G$ to be the homocyclic group
  $(C_{p^2})^r$. Clearly, $\iac G\cong L$ in this case, and so in the rest
  of the proof we assume that $L$ is a non-abelian IAC algebra.

  Let $H=H_{p,r}$ be the $r$-generator free group in the variety of groups that
  have exponent dividing $p^2$, nilpotency class at most
  2 and have the property that
  all $p$-th powers are central. 
  Assume that $c_{k}^{(i,j)}$ are the structure constants of $L$; that is, 
  \begin{equation}\label{strconsteq}
    \prd{v_i}{v_j}=\sum_{k=1}^{r}c_k^{(i, j)}v_k\end{equation}
    for all $1\leq i< j\leq r$.
    We consider the constants $c_k^{(i,j)}$ as elements of the field $\F_p$ as
    well as integers in $\{0,\ldots,p-1\}$.
		Suppose that
  $h_1, \dots, h_r$ are  generators of $H$.
    Define $N\leq H$ as the subgroup
    \begin{equation}\label{defN}
    N=\left\langle
    [h_i, h_j]^{-1}\prod_{k=1}^r(h_k^p)^{c_k^{(i,j)}}\mid 1\leq i< j\leq r
    \right\rangle.
    \end{equation}
    As $N\leq \Phi(H)\leq \Z(H)$, the subgroup
    $N$ is normal in $H$.
    Further, considering $\Phi(H)$ as an $\F_p$-vector space, the
    given generators of $N$ are linearly independent,
and hence $|N|=p^{\binom{r}{2}}$. 

    Set $G=H/N$. Then  $G$ is a finite $p$-group, and so
    the Frattini subgroup of $G$ is
    \[
    G^pG'=(H/N)^p(H/N)'=(H^pH'N)/N=\Phi(H)/N.
    \]
    On the other hand, since $\Phi(G)=G^pG'$ and the relations in $N$
    imply that $G'\leq G^p$,
    we find that $\Phi(G)=G^p$. Since $\Phi(G)\leq \Z(G)$
    and $\Phi(G)^p=1$, the map $\varphi\colon\overline{G}\to\Phi(G)$ defined by
    $\overline g\mapsto g^p$ is a surjective $\Aut(G)$-module homomorphism.
    Since $|\Phi(H)|=p^{r+\binom{r}{2}}$ by~\cite{ucs}*{p.\;87}, we have
    $|\overline G|=|\Phi(G)|=p^r$, and the map $\overline g\mapsto g^p$
    is an isomorphism.

    As remarked before this theorem, $G$ satisfies sufficient conditions
    for us to construct the anti-commutative algebra $\iac{G}$,
    as in Lemma~\ref{ucstoiac}. Set $g_i=h_iN$ for $1\leq i\leq r$, and
    define the linear map $\xi\colon L\rightarrow \overline G$
    by $v_i\xi=\overline g_i$ for all $i$. We claim that
    $\xi$ is an isomorphism of $\F_p$-algebras. Indeed, by~\eqref{estrela}
 and~\eqref{defN} we have
    \[
    \prd{v_i\xi}{v_j\xi}=\prd{\overline g_i}{\overline g_j}=[g_i, g_j]^{1/p}=
    \left(\prod_{k=1}^r{(g_k^p)^{c_k^{(i,j)}}}\right)^{1/p}.
    \]
   Using~\eqref{E:1} and~\eqref{strconsteq} this equals
\[
    \prod_{k=1}^r\left((\overline{g}_k^p)^{1/p}\right)^{c_k^{(i,j)}}
    =\prod_{k=1}^r{(\overline g_k)^{c_k^{(i,j)}}}=\prd{v_i}{v_j}\xi.
\]
Since $|L|=|\overline{G}|=p^r$ and $\xi$ is surjective, it follows that $\xi$ is an $\F_p$-isomorphism.

    We claim that $G$ is a UCS $p$-group of exponent $p^2$.
    The construction of the $p$-group $G$ ensures that $\overline{G}$ and $\Phi(G)$ are elementary abelian. By Lemma~\ref{lemma2},
    it suffices to show that $\Aut(G)$ acts irreducibly on
    $\overline G$ and on $\Phi(G)$.
    Fix $\alpha\in \Aut(L)$ and write
    \[
    v_i\alpha=\sum_{k=1}^r a_{ik} v_k\qquad\textup{where $i\in\{1,\dots,r\}$ and $a_{ik}\in\F_p$.}
    \]
As $H$ is a relatively free group, there is a homomorphism
    $\alpha^*\colon H\rightarrow H$ satisfying
    \[
    h_i\alpha^*=\prod_{k=1}^rh_k^{a_{ik}}\qquad\textup{where $a_{ik}\in\{0,1,\dots,p-1\}$}.
    \]
  By Burnside's Basis Theorem, $\alpha^*\in \Aut(H)$.
	
    We claim that $N$ is invariant under $\alpha^*$. Define $\psi\colon H'\rightarrow
    H^p$ as the linear map that acts on the generators of $H'$ by $[h_i,h_j]\psi=\prod_{k=1}^r(h_k^p)^{c_k^{(i,j)}}$ for  all  $i<j$.
    Note that the subgroup $N$ equals $\{h^{-1}(h\psi)\mid h\in H'\}$. It suffices to show that $\psi\alpha^*=\alpha^*\psi$, since this equality
    implies that
    $(h^{-1}(h\psi))\alpha^*=(h\alpha^*)^{-1}(h\alpha^*)\psi$ for all $h\in H'$, and, in turn, that $N$ is invariant under $\alpha^*$. 
    Define $\gamma\colon H^p\rightarrow L$  by $h_i^p\gamma=v_i$ and observe
    that $\gamma$ is a linear
     isomorphism. Similarly $\gamma\wedge\gamma\colon H'\rightarrow
    \wedge^2 L$ defined by $[h_i,h_j]\mapsto v_i\wedge v_j$
    is a linear isomorphism onto the exterior square $\wedge^2 L$ of $L$.
    The map $M\colon\wedge^2L\to L$ defined by $(u\wedge v)M=\prd uv$ is
    an epimorphism since $L=\prd{L}{L}$.
    These maps are illustrated in Diagram~\eqref{diag2}:
    \begin{equation}\label{diag2}
    \begin{tikzcd}
      H' \arrow[r,"\gamma\wedge\gamma"]\arrow[d,"\alpha^*"']
      \arrow[rrr,bend left, "\psi"]& \wedge^2 L
      \arrow[r,"M"]\arrow[d,"\alpha\wedge\alpha"] & L
      \arrow[r,"\gamma^{-1}"]\arrow[d,"\alpha"] & H^p\arrow[d,"\alpha^*"]\\
      H' \arrow[r,"\gamma\wedge\gamma"']\arrow[rrr,bend right,"\psi"'] & \wedge^2 L \arrow[r,"M"'] & L
      \arrow[r,"\gamma^{-1}"'] & H^p.
    \end{tikzcd}
    \end{equation}
Since each square piece of Diagram~\eqref{diag2} is 
commutative, it follows that $\psi=(\gamma\wedge\gamma)M\gamma^{-1}$.  Therefore $\psi\alpha^*=\alpha^*\psi$,
as claimed, which, in turn, implies that $N$ is invariant under $\alpha^*$.
Hence $\alpha^*$ induces an automorphism $\widetilde\alpha$ of $G=H/N$.
Recall that $\xi\colon L\to\overline{G}$ satisfies $v_i\xi=\overline{g}_i$.
It is straightforward to see that the following Diagram~\eqref{D:2} is commutative:
\begin{equation}\label{D:2}
\begin{tikzcd}
      H \arrow[r, twoheadrightarrow ]\arrow[d,"\scalebox{1.3}{$\alpha^*$}"']
      & H/N=G
      \arrow[r,twoheadrightarrow,"\overline{\phantom{N}}"]\arrow[d,"\scalebox{1.3}{$\widetilde{\alpha}$}"] & G/\Phi(G)=\overline{G}
      \arrow[r,"\scalebox{1.3}{$\xi^{-1}$}"]\arrow[d,"\scalebox{1.3}{$\overline{\alpha}$}"'] & L\arrow[d,"\scalebox{1.3}{$\alpha$}"]\\
      H \arrow[r,twoheadrightarrow] & H/N=G \arrow[r,twoheadrightarrow,"\underline{\phantom{N}}"'] & G/\Phi(G)=\overline{G}
      & L \arrow[l,"\scalebox{1.3}{$\xi$}"].
\end{tikzcd}
\end{equation}
Hence the equation $\overline{\alpha}=\xi^{-1}\alpha\xi$ relates $\alpha\in\Aut(L)$
and $\overline{\alpha}\in\Aut(\overline{G})$.
Therefore $\xi^{-1}\Aut(L)\xi$ is contained
in the group induced by $\Aut(G)$ on $\overline G$, and so
$\Aut(G)$ acts irreducibly on $\overline G$. At the start of the proof we
showed that $\overline G$ and $\Phi(G)$ are isomorphic as $\Aut(G)$-modules.
Thus $\Aut(G)$ is irreducible on $\Phi(G)$ also, and
by Lemma~\ref{lemma2}, $G$ is a UCS $p$-group of exponent $p^2$.
\end{proof}

Finite $p$-groups are commonly expressed using
\emph{polycyclic presentations}, see~\cite{handbookcgt}*{Chapter~8}.

\begin{corollary}\label{C:PCP}
Let $L$ be an IAC algebra over $\F_p$, $p$ odd, with basis $\{v_1,\dots,v_r\}$
and suppose $\prd{v_i}{v_j}=\sum_{k=1}^{r}c_k^{(i, j)}v_k$ where 
$c_k^{(i,j)}\in\F_p$. Then the
UCS group $\mathcal{G}(L)$ defined in Theorem~\textup{\ref{prop:iac2ucs}}
is isomorphic to the group given by the polycyclic presentation
\begin{multline}\label{E:PCP}
  G=\langle g_1,\dots,g_r,z_1,\dots,z_r\mid \\g_i^p=z_i,\hskip2mm z_i^p=[z_j,g_i]=[z_i,z_j]=1,\hskip2mm
  [g_i,g_j]=\prod_{k=1}^{r}z_k^{c_k^{(i, j)}}, i<j\rangle.
\end{multline}
\end{corollary}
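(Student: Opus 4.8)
The plan is to produce an epimorphism from the group $P$ defined by the presentation~\eqref{E:PCP} onto $\mathcal{G}(L)=H/N$, and then to show that $|P|\leq p^{2r}$, which is exactly $|\mathcal{G}(L)|$. Recall from the proof of Theorem~\ref{prop:iac2ucs} that $\mathcal{G}(L)=H/N$ with $H=H_{p,r}$ and $N$ the subgroup~\eqref{defN}, and that $|\mathcal{G}(L)|=|\overline{\mathcal{G}(L)}|\cdot|\Phi(\mathcal{G}(L))|=p^{2r}$. Set $g_i=h_iN$ and $z_i:=g_i^p=h_i^pN$ in $\mathcal{G}(L)$.

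For the epimorphism I would simply verify that these elements $g_i,z_i\in\mathcal{G}(L)$ satisfy every defining relation of~\eqref{E:PCP}: the relation $g_i^p=z_i$ holds by definition of $z_i$; the relations $z_i^p=1$, $[z_j,g_i]=1$ and $[z_i,z_j]=1$ hold because $H$ has exponent dividing $p^2$ and all $p$-th powers of $H$ are central; and $[g_i,g_j]=\prod_{k=1}^r z_k^{c_k^{(i,j)}}$ is precisely the relation modulo $N$ that was built into the generating set~\eqref{defN} of $N$. By the universal property of presentations this yields an epimorphism $\rho\colon P\to\mathcal{G}(L)$ with $g_i\mapsto h_iN$ and $z_i\mapsto h_i^pN$, so $|P|\geq p^{2r}$.

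It remains to check $|P|\leq p^{2r}$. The quick route is to observe that~\eqref{E:PCP} is a polycyclic presentation on the $2r$ generators $g_1,\dots,g_r,z_1,\dots,z_r$ (in that order), each of relative order $p$: the power relations $g_i^p=z_i$ and $z_i^p=1$ have right-hand sides that are words in later generators, and, using $[g_j,g_i]=\prod_k z_k^{-c_k^{(i,j)}}$ together with the centrality of the $z_k$, each conjugate relation $g_j^{g_i}$, $z_j^{g_i}$, $z_j^{z_i}$ for $i<j$ can be rewritten as a word in the generators following the one being conjugated; hence $|P|\leq p^{2r}$ by the standard theory of polycyclic presentations~\cite{handbookcgt}*{Chapter~8}. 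Alternatively, and without quoting consistency, one argues by hand inside $P$: the subgroup $Z=\langle z_1,\dots,z_r\rangle$ is central and elementary abelian of order at most $p^r$; every commutator of the $g_i$ lies in $Z$, so $P'\leq Z$ and $P$ has nilpotency class at most $2$; since $p$ is odd we have $\binom p2\equiv0\pmod p$, so the class-$2$ identity $(ab)^p=a^pb^p[b,a]^{\binom p2}$ gives $P^p\leq Z$; therefore $\Phi(P)=Z$ and $P/\Phi(P)$ is elementary abelian of rank at most $r$, whence $|P|\leq p^{2r}$. Combining the two bounds gives $|P|=p^{2r}=|\mathcal{G}(L)|$, so the epimorphism $\rho$ is an isomorphism, as required.

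The only genuinely delicate point is this last order estimate — that is, confirming that~\eqref{E:PCP} really is in polycyclic normal form, equivalently that the presentation does not collapse below order $p^{2r}$; everything else is a direct transcription of facts already proved about $H_{p,r}$ and $N$ in Theorem~\ref{prop:iac2ucs}.
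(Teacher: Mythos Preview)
Your proof is correct and follows essentially the same approach as the paper: construct the epimorphism $P\to H/N$ by checking that the images satisfy the defining relations, and bound $|P|\le p^{2r}$ via the polycyclic normal form to conclude the map is an isomorphism. Your alternative hands-on argument for the order bound (showing $Z$ is central elementary abelian and $P/Z$ is elementary abelian of rank at most~$r$) is a nice elaboration not present in the paper, which simply invokes the polycyclic normal form; one minor quibble is that invoking $\Phi(P)=P^pP'$ presupposes $P$ is a finite $p$-group, but your argument goes through cleanly if you just use $|P/Z|\le p^r$ directly, as you essentially do.
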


\begin{proof}
Identify the scalars $c_k^{(i, j)}\in\F_p$ in~\eqref{E:PCP} with
integers in the set $\{0,1,\dots,p-1\}$. The group presentation~\eqref{E:PCP}
is called a polycyclic presentation, and it has the property that
the elements of the group defined by the presentation
can be expressed in the form
$g_1^{x_1}\cdots g_r^{x_r}z_1^{x_{r+1}}\cdots z_r^{x_{2r}}$ where the exponents
satisfy $0\le x_i\le p-1$
for $1\le i\le 2r$. This shows that $|G|\le p^{2r}$. If equality holds, the
presentation is called \emph{consistent}, see~\cite{handbookcgt}*{page~280}.

Recall in the proof of Theorem~\textup{\ref{prop:iac2ucs}} that
$\mathcal{G}(L):=H/N$ where $H=H_{p,r}$ and $N$ is the normal subgroup
defined by~\eqref{defN}. Suppose, as in the proof of Theorem~\textup{\ref{prop:iac2ucs}}, that $H$ is generated by $h_1,\ldots,h_r$.  Since the map $\chi\colon G\to H/N$ defined
by $g_i\chi=h_iN$ and $z_i\chi=h_i^pN$ preserves the relations in~\eqref{E:PCP},
it is a homomorphism. However $\chi$ is surjective, so $\chi$ is an isomorphism, and the
presentation~\eqref{E:PCP} must be consistent.
\end{proof}

As introduced in the introduction, for a prime $p$, we let $\IAC_p$ denote the set of isomorphism classes of finite dimensional IAC algebras over~$\F_p$ and we let $\UCS_{p^2}$ denote the set of isomorphism classes of finite
UCS $p$-groups of exponent~$p^2$.

\begin{remark}\label{R:iso}
  Let $p$ be an odd prime.
  We have well-defined maps on isomorphism classes
  $\UCS_{p^2}\rightarrow \IAC_p$ with $[G]\mapsto[\mathcal{L}(G)]$ as per
  Lemma~\ref{ucstoiac} and
  $\IAC_p\rightarrow \UCS_{p^2}$ with $[L]\mapsto[\mathcal{G}(L)]$ as per
  Theorem~\ref{prop:iac2ucs}. It is convenient to identify isomorphism
  classes $[G]$ and $[L]$ with $G$ and $L$ respectively, and use $=$ instead of $\cong$.
  With this abuse in mind, $\mathcal{L}$ and $\mathcal{G}$ can be viewed
  as mutually inverse functions.
\end{remark}

\begin{theorem}\label{th:bij}
  Let $p$ be an odd prime.
  \begin{enumerate}[{\rm (a)}]
\item If $G$ is a finite UCS group of exponent $p^2$, then $\G(\iac G)= G$.
  \item If $L$ is a finite-dimensional IAC algebra over $\F_p$,
  then $\iac{\G(L)}= L$.
  \end{enumerate}
  Therefore the maps $\G\colon  \IAC_p\rightarrow \UCS_{p^2}$ and 
  $\mathcal{L}\colon \UCS_{p^2}\rightarrow \IAC_p$ are bijections.
\end{theorem}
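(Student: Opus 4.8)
The plan is to establish the two identities in parts (a) and (b) and then observe that, since $\mathcal L$ and $\mathcal G$ are mutually inverse on isomorphism classes, each is a bijection. Almost all of the substance has already been carried out in Lemma~\ref{ucstoiac}, Theorem~\ref{prop:iac2ucs} and Corollary~\ref{C:PCP}; what remains is to package it correctly.

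For part~(b), let $L$ be a finite-dimensional IAC algebra over $\F_p$ with basis $\{v_1,\dots,v_r\}$ and structure constants $c_k^{(i,j)}$. Theorem~\ref{prop:iac2ucs} already produces, inside its proof, an explicit $\F_p$-algebra isomorphism $\xi\colon L\to\mathcal L(\mathcal G(L))=\overline{G}$ sending $v_i$ to $\overline{g}_i$; indeed the computation there shows $\prd{v_i\xi}{v_j\xi}=\prd{v_i}{v_j}\xi$ for all $i<j$, and $\xi$ is bijective because $|L|=|\overline G|=p^r$. So $\iac{\G(L)}\cong L$, which under the identification of Remark~\ref{R:iso} reads $\iac{\G(L)}=L$. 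I would simply cite the relevant lines of the proof of Theorem~\ref{prop:iac2ucs} rather than repeat them.

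For part~(a), let $G$ be a finite UCS $p$-group of exponent $p^2$ and set $L=\iac G$. Lemma~\ref{ucstoiac} guarantees $L$ is an IAC algebra, so $\mathcal G(L)$ is defined. Choose generators $g_1,\dots,g_r$ of $G$ so that $\{\overline g_1,\dots,\overline g_r\}$ is an $\F_p$-basis of $L$, let $z_i=g_i^p$, and let $c_k^{(i,j)}\in\F_p$ be the structure constants of $L$ with respect to this basis. By the defining relation~\eqref{estrela}, $\prd{\overline g_i}{\overline g_j}=[g_i,g_j]^{1/p}$, so $[g_i,g_j]=\bigl(\prd{\overline g_i}{\overline g_j}\bigr)^p=\prod_{k=1}^r z_k^{c_k^{(i,j)}}$ using~\eqref{E:1}; and Lemmas~\ref{lemma1} and~\ref{lemma3} give $g_i^p$ central, $z_i^p=1$, $[z_j,g_i]=[z_i,z_j]=1$, and $|G|=|\overline G|\,|\Phi(G)|=p^{2r}$. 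Hence $g_1,\dots,g_r,z_1,\dots,z_r$ satisfy exactly the relations of the polycyclic presentation~\eqref{E:PCP}, and since $|G|=p^{2r}$ the presentation is consistent with $G$ as a quotient; comparing orders, $G$ is isomorphic to the group presented by~\eqref{E:PCP}, which by Corollary~\ref{C:PCP} is $\mathcal G(\iac G)=\mathcal G(L)$. Thus $\mathcal G(\iac G)=G$.

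Having both identities, $\mathcal G\circ\mathcal L=\mathrm{id}_{\UCS_{p^2}}$ by part~(a) and $\mathcal L\circ\mathcal G=\mathrm{id}_{\IAC_p}$ by part~(b), so $\mathcal G$ and $\mathcal L$ are mutually inverse bijections between $\IAC_p$ and $\UCS_{p^2}$. The only genuine obstacle is bookkeeping: one must be careful that the generating set of $G$ chosen in part~(a) is compatible with the basis used to define the structure constants fed into $\mathcal G$, and that the consistency count $|G|=p^{2r}$ is invoked to pin down the isomorphism type (rather than merely an epimorphism) — exactly the role Corollary~\ref{C:PCP} is designed to play. No new mathematics is needed beyond what is already in the excerpt.
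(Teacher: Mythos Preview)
Your proposal is correct and follows essentially the same route as the paper: part~(b) is a direct citation of the isomorphism $\xi$ from Theorem~\ref{prop:iac2ucs}, and part~(a) shows that $G$ satisfies the polycyclic presentation~\eqref{E:PCP} with the same structure constants as $\iac G$, then invokes Corollary~\ref{C:PCP} together with the order count $|G|=p^{2r}$ to conclude $G\cong\mathcal G(\iac G)$. The only difference is that you spell out the verification of the individual relations and the order argument more explicitly than the paper does.
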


\begin{proof}
(a)~Let $G$ be a finite UCS $p$-group of exponent $p^2$, say with~$r$
generators. Suppose that
$\{g_1,\ldots,g_r\}$ is a minimal generating set for $G$.
Since $G'=G^p$ there exist constants $c_k^{(i,j)}$ satisfying:
\begin{equation}\label{E:comm}
  [g_i,g_j]=\prod_{k=1}^r(g_k^p)^{c_k^{(i,j)}}\mbox{ where $1\leq i<j\leq r$ and $0\leq c_k^{(i,j)}\leq p-1$.}
\end{equation}
Introducing redundant generators $z_k:=g_k^p$ we see that $G$ has a polycyclic
presentation of the form~\eqref{E:PCP}.
It follows from ~\eqref{estrela} that $\prd{\overline{g}_i}{\overline{g}_j}=
\sum_{k=1}^r c_k^{(i,j)}\overline{g}_k$. Therefore the numbers $c_k^{(i,j)}$, interpreted as elements of the field $\F_p$, are
structure constants of $L:=\iac G$. By Corollary~\ref{C:PCP}
the group $\grp{L}$ has a polycyclic presentation of the form~\eqref{E:PCP}
involving the \emph{same} $c_k^{(i,j)}$. This proves
that $\grp{\iac{G}}\cong G$ as desired.

(b)~Let $L$ be a finite-dimensional IAC algebra over $\F_p$.
Theorem~\ref{prop:iac2ucs} proves precisely that $\iac{\grp{L}}\cong L$ where
the minimal number of generators of $\grp{L}$ is $\dim(L)$.
\end{proof}

We denote the kernel of the homomorphism $\Aut(G)\to\Aut(G/\Z(G))$
by $\Aut_C(G)$. The elements of $\Aut_C(G)$ are
called {\em central automorphisms} of $G$.

\begin{theorem}\label{th:aut}
Let $G$ be an $r$-generator UCS $p$-group of exponent $p^2$ where $p>2$, and let $L=\iac{G}$.
Then $\Aut(G)/\Aut_C(G)\cong\Aut(L)$ where $\Aut_C(G)$ is elementary abelian of
order $p^{r^2}$.
\end{theorem}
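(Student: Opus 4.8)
The plan is to analyse the restriction homomorphism $\rho\colon\Aut(G)\to\GL(\overline G)$, $\alpha\mapsto\overline\alpha$, where $\overline G=G/\Phi(G)$ is the underlying vector space of $L=\iac G$. By Lemma~\ref{lemma1} (assuming, as we may, that $G$ is non-abelian; the homocyclic case is straightforward) we have $\Z(G)=\Phi(G)=G^p$, so $G/\Z(G)=\overline G$ and $\ker\rho$ is precisely $\Aut_C(G)$. The theorem then reduces to two assertions: (i)~$\im\rho=\Aut(L)$, which by the first isomorphism theorem gives $\Aut(G)/\Aut_C(G)\cong\Aut(L)$; and (ii)~$\Aut_C(G)$ is elementary abelian of order $p^{r^2}$.

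For (i), the inclusion $\im\rho\subseteq\Aut(L)$ is exactly what the final paragraph of the proof of Lemma~\ref{ucstoiac} establishes, namely $\overline\alpha\in\Aut(\iac G)=\Aut(L)$ for every $\alpha\in\Aut(G)$. For the reverse inclusion I would invoke the construction of Theorem~\ref{prop:iac2ucs} applied to $L=\iac G$: by Theorem~\ref{th:bij}(a) we have $G\cong\mathcal{G}(\iac G)=\mathcal{G}(L)$, and the proof of Theorem~\ref{prop:iac2ucs} --- through the relatively free group $H_{p,r}$ and the commutativity of Diagram~\eqref{diag2} and Diagram~\eqref{D:2} --- shows precisely that every $\beta\in\Aut(L)$ is realised on $\overline G$ by a group automorphism of $\mathcal{G}(L)\cong G$. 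Hence $\Aut(L)\subseteq\im\rho$, and (i) follows.

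For (ii), I would use the ``displacement map'' description of central automorphisms. For $\alpha\in\Aut_C(G)$ put $d_\alpha(g)=g^{-1}(g\alpha)$, which lies in $\Phi(G)=\Z(G)$; since $\Phi(G)$ is central, $d_\alpha$ is a homomorphism $G\to\Phi(G)$, and since $\Phi(G)$ has exponent $p$ it factors through $\overline G$, yielding an $\F_p$-linear map $d_\alpha\colon\overline G\to\Phi(G)$. Conversely, any $\F_p$-linear $f\colon\overline G\to\Phi(G)$ gives $\alpha_f\colon g\mapsto g\,f(\overline g)$, which is a homomorphism because $f(\overline g)$ is central and an automorphism because it induces the identity on $\overline G$; one checks that $\alpha\mapsto d_\alpha$ and $f\mapsto\alpha_f$ are mutually inverse bijections between $\Aut_C(G)$ and $\operatorname{Hom}_{\F_p}(\overline G,\Phi(G))$. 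The point where the hypotheses genuinely enter is in upgrading $\alpha\mapsto d_\alpha$ to a \emph{group} homomorphism: this rests on the fact that every central automorphism fixes $\Phi(G)=G^p$ pointwise, which holds because for $\beta\in\Aut_C(G)$ and $x\in G$ one has $x\beta=xc$ with $c\in\Z(G)$ of order dividing $p$, whence $(x^p)\beta=(xc)^p=x^pc^p=x^p$, and $G^p$ is generated by $p$-th powers. Granting this, $d_{\alpha\beta}(g)=g^{-1}(g\beta)\bigl(d_\alpha(g)\beta\bigr)=d_\beta(g)\,d_\alpha(g)=d_\alpha(g)\,d_\beta(g)$, so $d_{\alpha\beta}=d_\alpha+d_\beta$. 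Finally, $\dim_{\F_p}\overline G=\dim_{\F_p}\Phi(G)=r$ by Lemma~\ref{lemma3}, so $\operatorname{Hom}_{\F_p}(\overline G,\Phi(G))$ is elementary abelian of order $p^{r^2}$, which gives (ii).

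The step I expect to be the main obstacle is the reverse inclusion in (i): it asserts that \emph{every} algebra automorphism of $L$ lifts to a group automorphism of $G$, which is stronger than the irreducibility of the induced action supplied by Lemma~\ref{lemma2}. Fortunately this lifting has already been carried out inside the proof of Theorem~\ref{prop:iac2ucs}, so the real task is to quote that construction correctly, after identifying $G$ with $\mathcal{G}(L)$ and checking that the resulting identification of Frattini quotients matches the map $\xi$ appearing there. Everything else --- the displacement-map bijection and the pointwise fixing of $\Phi(G)$ by central automorphisms --- is routine if slightly fiddly, and uses only that $G$ has class $2$ with $\Z(G)=\Phi(G)=G^p$ of exponent $p$.
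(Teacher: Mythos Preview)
Your proposal is correct and follows essentially the same route as the paper: both identify $\ker\rho=\Aut_C(G)$, invoke the proof of Theorem~\ref{prop:iac2ucs} (via the identification $G\cong\mathcal G(L)$) for surjectivity onto $\Aut(L)$, and establish the bijection $\Aut_C(G)\cong\operatorname{Hom}_{\F_p}(\overline G,\Phi(G))$ to compute the order. The only cosmetic difference is that the paper shows central automorphisms fix $\Phi(G)$ pointwise via $G'=\Z(G)$ and commutator preservation, whereas you use $G^p=\Phi(G)$ and the identity $(xc)^p=x^p$; both arguments are valid and equally short.
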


\begin{proof}
Let $G$ be an $r$-generator UCS $p$-group of exponent $p^2$. Write $A=\Aut(G)$.
Each $\alpha\in A$ induces and $\overline{\alpha}\in\Aut(\overline{G})$ defined
by $\overline{g}\overline{\alpha}=(g\alpha)\Phi(G)$ and
$\alpha\mapsto\overline{\alpha}$ is a homomorphism. The additive group
of $L:=\iac{G}$ is $\overline{G}$, and it was shown in the proof of
Theorem~\ref{prop:iac2ucs} that $\alpha\mapsto\overline{\alpha}$ defines a
\emph{surjective} homomorphism $\Aut(G)\to\Aut(L)$. The kernel $K$ of this
epimorphism comprises those $\alpha\in A$ with $\overline g{\,}\overline \alpha=\overline g$ for all $g\in G$. Since $\Phi(G)=\Z(G)$,
$K$ is  the group $\Aut_C(G)$ of central automorphisms.
Therefore $\Aut(G)/\Aut_C(G)\cong\Aut(L)$ as claimed.

Let $\alpha\in K$ be a central automorphism. Then $g \alpha=gz_g$ for
some $z_g\in Z(G)$ for all $g\in G$.
Since
$[g_1\alpha,g_2\alpha]=[g_1,g_2]$ and $G'=\Z(G)$, we see that $\alpha$
acts as the identity on $\Z(G)$. Hence
$z_g$ depends only on the coset $g\Phi(G)=\overline g$ and we may
write $z_{\overline g}=z_g$.  This gives a function
$\overline{G}\to\Z(G)$ defined by  $\overline{g}\mapsto z_{\overline{g}}$.
There is a well-known isomorphism
$\Aut_C(G)\to\textup{Hom}_{\F_p}(\overline{G},\Z(G))$ taking $\alpha$
to the map $\overline{g}\mapsto z_{\overline{g}}$. However,
$\textup{Hom}_{\F_p}(\overline{G},\Z(G))$ is isomorphic to the additive group
of $r\times r$ matrices over $\F_p$. Thus $\Aut_C(G)$ is elementary abelian of
order $p^{r^2}$. (By contrast, the group $\textup{Inn}(G)$ of
inner automorphisms, which is a subgroup of $\Aut_C(G)$, has order only~$p^r$.)
\end{proof}

\section{The structure of UCS \texorpdfstring{$p$}{}-groups and IAC algebras}\label{sec4}

In this section we prove that a finite-dimensional IAC algebra
over an arbitrary field
is semisimple in the sense that it is a direct sum of
pairwise isomorphic simple algebras.
This leads naturally to the study of simple
IAC algebras in Section~\ref{sec5}.
As noted in the introduction, invariant non-associative algebra
structures also appeared in the study of finite simple groups, and
a result similar to our Theorem~\ref{SimpleSum} was proved in~\cite{kmr}*{Lemma~2.4}. Since our context is
somewhat different, we present our theorem with a proof. 
We will also extend the
group--algebra duality established in Section~\ref{sec3}.
In Proposition~\ref{pembedded} we exhibit a bijection between subalgebras
of $L$ and powerful subgroups of $G=\grp{L}$ that contain $\Phi(G)$.
This is reminiscent of the duality between field extensions and groups in
Galois theory, because the substructures match.

Let $V$ be a vector space. An irreducible subgroup $H$ of $\GL(V)$ is called
\emph{imprimitive} if there is an $H$-invariant
direct sum decomposition
$V=V_1\oplus\dots\oplus V_\ell$ with $\ell\geq 2$,
We call the decomposition $V=V_1\oplus \dots \oplus V_\ell$ an {\em imprimitivity decomposition} for $H$. If no such
decomposition exists, we call $H$ \emph{primitive}.
By the irreducibility of the acting group, an imprimitivity decomposition is
necessarily equidimensional; that is $\dim V_i=\dim V_j$ for all subspaces
$V_i,\ V_j$ of the decomposition.

Suppose that $V=V_1\oplus\dots\oplus V_\ell$ is an imprimitivity
decomposition for an irreducible subgroup $H\leq \GL(V)$.
Then the action of $H$ induces a permutation group on the set
$\{V_1,\ldots,V_\ell\}$. Further, $H$ is isomorphic (as a linear group) to
a subgroup of
the wreath product $\GL(r/\ell,\F)\wr K$ where  $r=\dim V$ and 
$K\leq\Sym_\ell$ is permutationally isomorphic to the group induced by $H$ on $\{V_1,\ldots,V_\ell\}$.
The structure of $\GL(r/\ell,\F)\wr\Sym_\ell$ is described
in~\cite{suprunenko}*{Section~15}.
The following theorem is commonly attributed to Clifford,
see~\cite{suprunenko}*{Lemma~4, Section~15}.

\begin{theorem}\label{th:wreath}
The following hold.
\begin{enumerate}[{\rm (a)}]
\item  Given a vector space $V_1$
  over a field $\F$, a non-trivial subgroup $A_1\le \GL(V_1)$, and a transitive
subgroup $P\le\Sym_\ell$, the subgroup $A_1\wr P$ of $\GL((V_1)^{\oplus\ell})$
is irreducible if and only if $A_1$ is irreducible on~$V_1$.
\item Suppose that $A\leq \GL(V)$ is irreducible on the vector space $V$
and preserves the imprimitivity
decomposition $V=V_1\oplus\cdots\oplus V_\ell$. Then $A$ is conjugate in $\GL(V)$ to a subgroup of $\GL(V_1)\wr\Sym_\ell$. Further, $A$ is transitive on
$\{V_1,\dots,V_\ell\}$ and the setwise stabilizer $A_{V_1}$ is irreducible on $V_1$.
\end{enumerate}
\end{theorem}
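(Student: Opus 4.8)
The plan is to run the classical Clifford-type arguments; one could instead simply quote~\cite{suprunenko}*{Lemma~4, Section~15}, but the proof is short enough to include. For part~(a) the forward implication is easy: if $A_1$ is \emph{not} irreducible on $V_1$, choose a proper nonzero $A_1$-submodule $U_1\le V_1$ and let $U_i$ be the corresponding subspace of the $i$-th summand of $W:=(V_1)^{\oplus\ell}=V_1\oplus\dots\oplus V_\ell$; then $U:=U_1\oplus\dots\oplus U_\ell$ is preserved by the base group of $A_1\wr P$ (acting coordinatewise) and is permuted by the top group $P$, so $U$ is a proper nonzero $A_1\wr P$-submodule of $W$, contradicting irreducibility.

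For the reverse implication of~(a), assume $A_1$ is irreducible on $V_1$ and let $0\ne U\le W$ be $A_1\wr P$-invariant. The key step is to produce a nonzero element of $U$ supported on a single summand. Take any $0\ne u=u_1+\dots+u_\ell\in U$ with $u_j\ne0$. Because $A_1\ne1$ is irreducible on $V_1$, no nonzero vector of $V_1$ is fixed by every element of $A_1$: if $\dim V_1\ge2$ such a vector would span an $A_1$-invariant line, and if $\dim V_1=1$ it would force $A_1=1$. Hence there is $a\in A_1$ with $au_j\ne u_j$; letting $g\in A_1\wr P$ act as $a$ on the $j$-th summand and trivially on the others, $gu-u\in U$ is nonzero and lies in $V_j$, so $U\cap V_j\ne0$. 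Now $U\cap V_j$ is invariant under the copy of $A_1$ acting on $V_j$, which is irreducible there, so $V_j\subseteq U$; and since $P$ is transitive, for every $i$ there is $\sigma\in P$ with $\sigma(V_j)=V_i$, whence $V_i=\sigma(V_j)\subseteq\sigma(U)=U$. Thus $U=W$, proving irreducibility of $A_1\wr P$.

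For part~(b), first note transitivity: the $A$-orbits on $\{V_1,\dots,V_\ell\}$ partition the summands into subspaces (the sums over the orbits) that are $A$-invariant and whose direct sum is $V$, so irreducibility of $A$ forces a single orbit, and in particular $\dim V_i=r/\ell$ for all $i$. For the conjugacy claim, fix $g_i\in A$ with $g_i(V_1)=V_i$ (with $g_1=1$), fix a basis $B_1$ of $V_1$, and put $B_i:=g_i(B_1)$; relative to the basis $B_1\cup\dots\cup B_\ell$ of $V$, each $a\in A$ is a block-monomial matrix whose nonzero block in position $(\sigma(i),i)$ is the matrix of $g_{\sigma(i)}^{-1}ag_i\in\GL(V_1)$, where $\sigma\in\Sym_\ell$ is the permutation that $a$ induces on the summands; this exhibits $A$ inside $\GL(V_1)\wr\Sym_\ell$, and changing from a fixed ambient basis to $B_1\cup\dots\cup B_\ell$ is a conjugation in $\GL(V)$. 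Finally, for irreducibility of $A_{V_1}$ on $V_1$: if $0\ne U_1\le V_1$ is $A_{V_1}$-invariant, then for $a\in A$ the subspace $a(U_1)$ depends only on the index $i$ with $a(V_1)=V_i$ (two such choices of $a$ differ by an element of $A_{V_1}$, which fixes $U_1$), so the $U_i:=a(U_1)\le V_i$ are well defined and nonzero and $U:=U_1\oplus\dots\oplus U_\ell$ is $A$-invariant; by irreducibility of $A$ we get $U=V$, hence $U_i=V_i$ for all $i$, and in particular $U_1=V_1$.

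The only point that needs genuine care is the extraction of a single-summand vector in the reverse direction of~(a), which is precisely where the hypotheses that $A_1$ is non-trivial and that $P$ is transitive enter; the remainder is bookkeeping with block-monomial matrices and with the transversal $g_1,\dots,g_\ell$.
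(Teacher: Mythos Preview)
Your proof is correct. The paper itself does not prove this theorem at all: it states the result and attributes it to Clifford, citing \cite{suprunenko}*{Lemma~4, Section~15}, exactly as you note one could do. Your self-contained argument is therefore strictly more than what the paper provides, and the steps are all sound---in particular, the extraction of a single-summand vector in part~(a) via $gu-u$ (using that a non-trivial irreducible $A_1$ has no nonzero fixed vectors) and the well-definedness of the $U_i$ in part~(b) via the transversal $g_1,\dots,g_\ell$ are handled correctly.
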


Using Lie theoretic notation, the {\em center} of an
anti-commutative algebra $L$ is
\[
  \Z(L):=\{x\in L\mid \prd xy=0\mbox{ for all }y\in L\}.
\]
Since $\Z(L)$ is invariant under $\Aut(L)$, we see,
for a non-abelian IAC algebra $L$,  that $\Z(L)=0$.

The next theorem shows that an IAC algebra (over an arbitrary field)
is semisimple. See~\cite{kmr}*{Lemma~2.4} for a similar result.

\begin{theorem}\label{SimpleSum}
  The following hold.
  \begin{enumerate}[{\rm (a)}]
  \item Suppose $L_0$ is a simple IAC algebra, and $\ell\ge1$ is an integer.
    Then the direct sum $L=(L_0)^{\oplus\ell}$ is an IAC algebra.
    Further, if $L_0$ is non-abelian, then
    $L=(L_0)^{\oplus\ell}$ has precisely $\ell$ minimal ideals namely 
    the given direct summands, and in particular $\Aut(L)=\Aut(L_0)\wr \Sym_\ell$.
  \item Let $L$ be an IAC algebra of finite dimension. Then
  $L\cong I^{\oplus\ell}$ for some simple IAC subalgebra $I$ of $L$ and some $\ell\ge1$.
  Further, if $L$ is non-abelian, then $\ell$ is the number of minimal ideals of $L$.
  \end{enumerate}
\end{theorem}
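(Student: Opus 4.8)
The plan is to prove (a) and (b) in turn, relying throughout on three facts: in a direct sum of anti-commutative algebras all products between distinct summands vanish; a non-abelian IAC algebra $L$ has $\Z(L)=0$; and the Clifford-type statement Theorem~\ref{th:wreath}.

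For part~(a), I would first note that the wreath product $\Aut(L_0)\wr\Sym_\ell$ acts on $L=(L_0)^{\oplus\ell}$ by algebra automorphisms, the base group acting coordinatewise and $\Sym_\ell$ permuting the identical summands. Since $\Aut(L_0)$ is irreducible on $L_0$ and non-trivial (the only exception, $L_0\cong\F_2$, makes $L$ abelian with $\Aut(L)=\GL(L)$ irreducible anyway), Theorem~\ref{th:wreath}(a) shows this subgroup, hence $\Aut(L)$, is irreducible on $L$, so $L$ is IAC. Now assume $L_0$ is non-abelian. Each summand $L_0^{(i)}$ is an ideal of $L$ and is minimal because $L_0$ is simple. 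Conversely, for a minimal ideal $M$ and each $i$, the ideal $M\cap L_0^{(i)}$ of $L$ is $0$ or $M$; in the latter case $M=L_0^{(i)}$, while if $M\cap L_0^{(i)}=0$ for all $i$ then $\prd{M}{L_0^{(i)}}\subseteq M\cap L_0^{(i)}=0$ for every $i$, so $\prd{M}{L}=0$ and $M\subseteq\Z(L)=0$, a contradiction. Thus the minimal ideals of $L$ are exactly $L_0^{(1)},\dots,L_0^{(\ell)}$. Hence $\Aut(L)$ permutes these $\ell$ ideals, giving a homomorphism $\Aut(L)\to\Sym_\ell$ that is surjective (coordinate permutations already lie in $\Aut(L)$) with kernel the automorphisms stabilizing every summand, which by restriction is $\prod_i\Aut(L_0^{(i)})\cong\Aut(L_0)^\ell$; splitting off the coordinate permutations yields $\Aut(L)=\Aut(L_0)\wr\Sym_\ell$.

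For part~(b), if $L$ is abelian then trivially $L\cong\F^{\oplus\dim L}$ with $I=\F$ the one-dimensional abelian algebra, so assume $L$ is non-abelian; then $\Z(L)=0$. The sum of all minimal ideals of $L$ is $\Aut(L)$-invariant and non-zero, hence equals $L$ by irreducibility; choose from the minimal ideals a maximal subset $M_1,\dots,M_\ell$ whose sum is direct. If $S:=M_1\oplus\cdots\oplus M_\ell\ne L$, then some minimal ideal $M$ is not contained in $S$, and $M\cap S$ is a proper ideal of $L$ inside $M$, so $M\cap S=0$, contradicting maximality; hence $L=M_1\oplus\cdots\oplus M_\ell$. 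Since $\prd{M_i}{M_j}\subseteq M_i\cap M_j=0$ for $i\ne j$, each $M_i$ is non-abelian (else $M_i\subseteq\Z(L)=0$) with $\Z(M_i)\subseteq\Z(L)=0$; and if $M$ is a minimal ideal with $M\cap M_i=0$ for all $i$, then writing $m=\sum m_i\in M$ gives $\prd{m_i}{M_i}=\prd{m}{M_i}\subseteq M\cap M_i=0$, so $m_i\in\Z(M_i)=0$ and $M=0$, whence every minimal ideal equals some $M_i$ and $L$ has exactly $\ell$ minimal ideals. Moreover $\Aut(L)$ permutes $\{M_1,\dots,M_\ell\}$ transitively (a non-trivial invariant subset would give a proper non-zero $\Aut(L)$-invariant subspace), so the $M_i$ are pairwise isomorphic via automorphisms of $L$; set $I=M_1$. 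Any ideal $J$ of $I$ satisfies $\prd{J}{L}=\prd{J}{M_1}\subseteq J$, so $J$ is an ideal of $L$ inside $M_1$, hence $0$ or $M_1$, and $I$ is simple. Finally, if $\ell\ge2$ then $L=M_1\oplus\cdots\oplus M_\ell$ is an imprimitivity decomposition for the irreducible group $\Aut(L)$, so Theorem~\ref{th:wreath}(b) gives that the setwise stabilizer $\Aut(L)_{M_1}$ is irreducible on $M_1$; since it restricts into $\Aut(I)$, the algebra $I$ is IAC (and if $\ell=1$ then $L=I$). Therefore $L\cong I^{\oplus\ell}$ with $I$ a simple IAC algebra and $\ell$ the number of minimal ideals of $L$.

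The step I expect to need the most care is showing that a minimal ideal of $L$ must be one of the direct summands: this is where the hypotheses genuinely enter, through $\Z(L)=0$ and the induced vanishing of $\Z(M_i)$, and through the observation---special to direct sums of anti-commutative algebras---that an ideal of a summand is automatically an ideal of the whole algebra; the latter is also precisely what upgrades each minimal ideal from ``no proper $L$-ideals'' to ``simple as an algebra''. The irreducibility inputs, for the wreath action and for the point stabilizer, are supplied directly by Theorem~\ref{th:wreath}.
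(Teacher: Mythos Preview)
Your proof is correct and follows essentially the same approach as the paper: both parts hinge on $\Z(L)=0$, the vanishing of products between distinct minimal ideals, and Theorem~\ref{th:wreath}. Your organization in part~(b) differs cosmetically---you take a maximal directly-summing family of minimal ideals and then show these exhaust all minimal ideals, whereas the paper takes the $\Aut(L)$-orbit of a single minimal ideal and shows the sum over the orbit is direct via a center argument---but the underlying ideas are the same, and if anything your version is a bit more explicit about why $I$ is simple as an algebra and why $\ell$ counts all minimal ideals.
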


\begin{proof}
(a)~If $L_0$ is abelian, then so is $L$. Thus $\dim(L_0)=1$ and
  $\Aut(L)=\GL(L)$ acts irreducibly. Hence $L$ is IAC.

Suppose now that
$L_0$ is non-abelian.
Let $L_1,\ldots,L_\ell$ be the summands of the direct sum
decomposition of $L$. Then, for $i\geq 1$, $L_i$ is a minimal ideal of
$L$. We claim that each minimal ideal $I$ coincides with $L_i$ for
some $i\geq 1$.  Assume, seeking a contradiction, that $I\neq L_i$ for
all $i\geq 1$.  Then, the minimality of $I$ implies
$\prd{L_i}I\subseteq L_i\cap I=0$ for all $i\geq 1$. Thus
$\prd{L}I=0$, and so $I\leq \Z(L)$; this is a contradiction as
$\Z(L)=0$.  Therefore $L_1,\ldots,L_\ell$ are all the minimal ideals
of $L$ as claimed. Now $\Aut(L)$ contains the wreath product
$W=\Aut(L_0)\wr \Sym_\ell$ and $W$ is an irreducible subgroup of
$\GL(L)$ by Theorem~\ref{th:wreath}(a). Thus $L$ is an IAC algebra.
The group  $\Aut(L)$ permutes the minimal
ideals of $L$, and hence $L=L_1\oplus\cdots\oplus L_\ell$ is an imprimitivity
decomposition of $\Aut(L)$.  Further, the stabilizer of $L_i$ in $\Aut(L)$
induces a group of automorphisms of $L_i$, which shows,
by Theorem~\ref{th:wreath}(b), that $\Aut(L)\leq\Aut(L_0)\wr \Sym_\ell$.
Therefore $\Aut(L)=\Aut(L_0)\wr
\Sym_\ell$ as claimed.

(b) Let  $L$ be a finite-dimensional IAC algebra and set $A=\Aut(L)$. If $L$
is abelian, then $L=\F^{\oplus\ell}$ where $\ell=\dim(L)$,
and each copy of $\F$ is a simple IAC algebra of dimension~1.
Thus in this case the assertion is valid.
Suppose that $L$ is non-abelian. As noted above, $\Z(L)=0$.
Let $I$ be a minimal ideal of $L$ and  set $\mathcal I=\{I\alpha\mid \alpha\in A \}$. Define $K=\sum_{J\in\mathcal I}J$; then $K$ is a nontrivial ideal
of $L$ which is invariant under $A$. Thus $K=L$ because $A$ acts irreducibly
on $L$. We show that the sum $\sum_{J\in\mathcal I}J$ is direct. Given $J_0\in\mathcal I$, set
$\overline J_0=\sum_{J\in\mathcal I\setminus\{J_0\}} J$.
We are required to show that $J_0\cap\overline J_0=0$. If
$J\in\mathcal I\setminus\{J_0\}$, then $J_0\cap J\lhdeq L$, and, by
the minimality of $J$, we have that $J_0\cap J=0$. On the
other hand, $\prd{J_0}J\leq J_0\cap J$, and hence $\prd{J_0}J=0$.
Thus $J_0$ centralizes each of the elements of $\mathcal I\setminus\{J_0\}$,
which gives that $\prd{J_0}{\overline J_0}=0$.
Then
\[
  \prd{J_0\cap \overline J_0}L=\prd{J_0\cap \overline J_0}{J_0+\overline J_0}=
  \prd{J_0\cap \overline J_0}{J_0}=0,
\]
which implies that $J_0\cap \overline J_0\leq \Z(L)$, and, in turn, that
$J_0\cap \overline J_0=0$. Thus
$L$ is equal to the direct sum $\bigoplus_{J\in\mathcal I}J$ as claimed.
Write $L\cong I^{\oplus\ell}$ where $I$ is a minimal ideal of $L$ and
$\ell=|\mathcal I|$.

It remains to show that $I$ is IAC. As $I$ inherits anti-commutativity from $L$,
we need to show that $\Aut(I)$ is irreducible on $I$.
Since $\Aut(L)$ permutes the elements of $\mathcal I$, the
decomposition $L=\bigoplus_{J\in \mathcal I}J$ is an imprimitivity decomposition for
$\Aut(L)$. Let $A_I$ denote the setwise stabilizer of $I$ in $\Aut(L)$. Then
$A_I$ induces a group of automorphisms of $I$ that is irreducible on
$I$ by Theorem~\ref{th:wreath}(b). Hence $I$ is an IAC algebra.
\end{proof}

\begin{theorem}\label{SumPower}
Let $G$ be a finite UCS $p$-group of exponent $p^2$ for some odd prime $p$ and
let $k\geq 1$.
Then $G^{\times k}$ is a UCS $p$-group   of exponent $p^2$ and $\iac{G^{\times k}}\cong
\iac G^{\oplus k}$.
\end{theorem}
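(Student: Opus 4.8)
The plan is to derive the first assertion from Taunt's direct-product theorem (Theorem~\ref{dirprodUCS}), and then to establish the algebra isomorphism by computing the bracket~\eqref{estrela} componentwise.

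To apply Theorem~\ref{dirprodUCS} I would take $N:=\Phi(G)$, the unique proper non-trivial characteristic subgroup of the UCS group $G$, so that $G/N=\overline G$, and check that $\Aut(G)$ fixes no non-trivial element of $N$ nor of $\overline G$. By Lemma~\ref{lemma3} the $\F_p\Aut(G)$-modules $\overline G$ and $\Phi(G)=N$ are isomorphic, so it is enough to treat $\overline G$; and $\Aut(G)$ is irreducible on $\overline G$ by Lemma~\ref{lemma2}. A non-zero $\Aut(G)$-fixed vector would span a one-dimensional submodule, forcing $\dim_{\F_p}\overline G=1$, so $G$ is cyclic and hence $G\cong C_{p^2}$; but $\Aut(C_{p^2})$ does not act trivially on $\overline G\cong\F_p$, since its order $p(p-1)$ exceeds the order $p$ of the subgroup of automorphisms that are trivial on $\overline G$ --- a contradiction (here $p>2$ is used). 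Thus Theorem~\ref{dirprodUCS} applies and $G^{\times k}$ is a UCS group. As $G^{\times k}$ contains a copy of $G$, it has an element of order $p^2$, so its exponent is $p^2$ and $G^{\times k}\in\UCS_{p^2}$.

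For the isomorphism $\iac{G^{\times k}}\cong\iac G^{\oplus k}$ I would use the standard identifications $\Phi(G^{\times k})=\Phi(G)^{\times k}$ and $\overline{G^{\times k}}=\overline G^{\oplus k}$. Under these, the $p$-th power isomorphism $\varphi$ of Lemma~\ref{lemma3} for the group $G^{\times k}$ is exactly the componentwise map $(\overline{x_1},\dots,\overline{x_k})\mapsto(x_1^p,\dots,x_k^p)$, hence so is its inverse, the operation $(\cdot)^{1/p}$. Since commutators in $G^{\times k}$ are also computed componentwise, for representatives $x=(x_1,\dots,x_k)$ and $y=(y_1,\dots,y_k)$ one obtains
\[
  \prd{\overline x}{\overline y}=[x,y]^{1/p}
   =\bigl([x_1,y_1]^{1/p},\dots,[x_k,y_k]^{1/p}\bigr)
   =\bigl(\prd{\overline{x_1}}{\overline{y_1}},\dots,\prd{\overline{x_k}}{\overline{y_k}}\bigr),
\]
which is precisely the product of $\iac G^{\oplus k}$. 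Thus the vector-space identification $\overline{G^{\times k}}=\overline G^{\oplus k}$ is an isomorphism of $\F_p$-algebras $\iac{G^{\times k}}\cong\iac G^{\oplus k}$, completing the proof.

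The only step needing real care is the verification of the fixed-point hypothesis of Theorem~\ref{dirprodUCS}; everything else is routine bookkeeping, and no abelian/non-abelian case split is required, the case $\dim\overline G=1$ being exactly the cyclic group $C_{p^2}$ dealt with above. If one prefers to bypass Taunt's theorem, there is an alternative: $\iac G$ is IAC, so $\iac G^{\oplus k}$ is IAC by Theorem~\ref{SimpleSum}, whence $\grp{\iac G^{\oplus k}}\in\UCS_{p^2}$ by Theorem~\ref{th:bij}; and the polycyclic presentation of Corollary~\ref{C:PCP} attached to $\iac G^{\oplus k}$ is visibly $k$ disjoint copies of the one attached to $\iac G$, so $\grp{\iac G^{\oplus k}}\cong G^{\times k}$, which again gives $G^{\times k}\in\UCS_{p^2}$ and $\iac{G^{\times k}}\cong\iac G^{\oplus k}$.
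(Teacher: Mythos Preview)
Your argument is correct, and the second half (the identification of the product on $\overline{G^{\times k}}$ with the componentwise product on $\overline G^{\oplus k}$, via the componentwise description of $\Phi$, of the $p$-th power, and of the commutator) is essentially the same as the paper's.

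For the first half the paper proceeds differently: rather than appealing to Theorem~\ref{dirNGeodo]: it does not verify the fixed-point hypothesis of Taunt's criterion at all.  Instead it separates off the abelian case, and for non-abelian $G$ it argues directly that $\Aut(G)\wr \Sym_k\le\Aut(G^{\times k})$ and then uses Theorem~\ref{th:wreath}(a) to show this wreath product acts irreducibly on $\overline{G^{\times k}}\cong\overline G^{\times k}$ and on $\Phi(G^{\times k})=\Phi(G)^{\times k}$, concluding via Lemma~\ref{lemma2}. Your route via Taunst is slightly cleaner in that you need no abelian/non-abelian case split (the unique case $\dim \overline G=1$ is absorbed into the fixed-point verification) and makes direct use of a theorem already recorded in the paper; the papers direct wovedo approach, on the other hand, is self-contained and ties in with the wreath-product description $\Aut(L_0)\wr\Sym_\ell$ established just before in Theorem~\ref{SimpleSum}. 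Your alternative route via Theorem~\ref{SimpleSum} and the explicit polycyclic presentation of Corollary~\ref{C:PCP} is also valid (and involves no circularity, since those results are proved earlier), at the cost of invoking more machinery.
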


\begin{proof}
  The assertion of the theorem holds if $G$ is abelian, and so we
  may assume that $G$ is non-abelian.
  Certainly $G^{\times k}$ has exponent~$p^2$. Moreover,
  \[
    \Phi(G^{\times k})=\Phi(G)^{\times k}\quad\mbox{and}\quad
    G^{\times k}/\Phi(G^{\times k})\cong(G/\Phi(G))^{\times k}=\overline{G}^{\times k}.
  \]
By Lemma~\ref{lemma2},
$\Aut(G)$ is irreducible on $\overline{G}$ and on $\Phi(G)$.
Further, since $G$ is non-abelian,
$\overline{G}$ and $\Phi(G)$ are non-trivial $\Aut(G)$-modules.
Since $\Aut(G)\wr\Sym_k\le\Aut(G^{\times k})$,
it follows from Theorem~\ref{th:wreath}(a)
that $\Aut(G^{\times k})$ acts irreducibly on $G^{\times k}/\Phi(G^{\times k})$ and on
$\Phi(G^{\times k})$, and thus $G^{\times k}$ is
a UCS $p$-group, by Lemma~\ref{lemma2}.

We now prove that $\iac{G^{\times k}}\cong\iac G^{\oplus k}$.
Let $G_i$ denote the $i$-th copy of $G$ in the direct
product $G^{\times k}$. Set $L=\iac{G^{\times k}}$ and, for $i=1,\ldots,k$,
let $L_i=G_i\Phi(G^{\times k})/\Phi(G^{\times k})$. We claim that $L_i\lhdeq L$.
First, if $\overline u,\ \overline v\in L_i$ with $u,\ v\in G_i$, then
$\prd{\overline u}{\overline v}=[u,v]^{1/p}\in L_i$. Hence $L_i\leq L$.
If $g_i\in G_i$ and $g_j\in G_j$ with $i\neq j$, then $[g_i,g_j]=1$, and
so $\prd{\overline g_i}{\overline g_j}=0$. This shows that $L_i$ is an ideal
of $L$ for each $i\in\{1,\dots,k\}$. Since
\[
\iac{G^{\times k}}=G^{\times k}/\Phi(G^{\times k})=\bigoplus_{i=1}^kG_i\Phi(G^{\times k})/\Phi(G^{\times k})=\bigoplus_{i=1}^k L_i,
\]
and each $L_i$ is isomorphic to $\iac G$, we obtain that
$\iac{G^{\times k}}=\iac G^{\oplus k}$.
\end{proof}

Powerful $p$-groups were introduced by Lubotzky and Mann~\cite{powerful}.
A finite $p$-group $G$ is said to be {\em powerful} if $G'\leq G^p$ when $p>2$, and $G'\leq G^4$ when $p=2$. A subgroup $N$ of a $p$-group $G$
is said to be {\em powerfully embedded} in $G$ if $[N,G]\leq N^p$ when $p>2$
 and $[N,G]\leq N^4$ when $p=2$. A powerfully embedded subgroup of $G$
is always normal in $G$.

\begin{proposition}\label{pembedded}
Let $G$ be a finite UCS $p$-group of odd exponent $p^2$, and $L=\mathcal{L}(G)$.
\begin{enumerate}[{\rm (a)}]
 \item There is a bijection between the set of subalgebras of $L$ and the
  set of powerful subgroups of $G$ that contain $\Phi(G)$.
\item The ideals of $L$ correspond, via part \textup{(a)}, to
  powerfully embedded subgroups of~$G$.
\end{enumerate}
\end{proposition}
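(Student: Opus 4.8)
The plan is to build the bijection explicitly through the quotient $G \to \overline G = L$ (as $\F_p$-vector spaces) and to check that the group-theoretic conditions "powerful" and "powerfully embedded" translate, under this correspondence, precisely into "closed under $\prd\cdot\cdot$" and "absorbs $\prd\cdot L$". Since $G$ has exponent $p^2$, odd $p$, $G' = G^p = \Phi(G) = \Z(G)$, and $L = \iac G$ has underlying space $\overline G$, the natural candidate map is: to a subspace $W \le \overline G$ (equivalently, a subgroup $H$ with $\Phi(G) \le H \le G$ and $\overline H = W$) associate the subalgebra/subgroup generated appropriately. Concretely, I would define, for a subgroup $H$ with $\Phi(G)\le H\le G$, the subspace $\overline H \le \overline G = L$, and conversely, for a subspace $W\le L$, the subgroup $H_W = \{g\in G : \overline g\in W\}$, which satisfies $\Phi(G)\le H_W$ and $\overline{H_W} = W$. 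These two assignments are mutually inverse bijections between $\{H : \Phi(G)\le H\le G\}$ and $\{$subspaces of $L\}$; this part is routine since $\Phi(G)$ is exactly the kernel of $G\to\overline G$.

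The heart of part~(a) is then the claim: for $\Phi(G)\le H\le G$, the subgroup $H$ is powerful \emph{as a subgroup of $G$} (meaning $H' \le H^p$) if and only if $\overline H$ is a subalgebra of $L$. First I would record that because $H \ge \Phi(G) = G^p$, we have $H^p = G^p = \Phi(G)$ (every element of $G^p$ is a $p$-th power of an element of $G$, hence of $H$), so the condition $H'\le H^p$ simply says $H' \le \Phi(G)$ — which is automatic. Wait: that would make every such $H$ powerful, so the bijection in~(a) would be with \emph{all} intermediate subgroups, not a proper subclass. So the correct reading must be that the defining inclusion uses $H^p$ computed correctly, or — more likely — that the relevant condition is $[H,H]\le H^p$ where one must be careful that $H^p$ can be smaller than $\Phi(G)$ only when $H$ is generated by fewer elements. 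Actually since $H\ge G^p$, indeed $H^p=G^p$ always, so I would instead interpret "powerful subgroups of $G$ that contain $\Phi(G)$" via: $H$ powerful $\iff$ $H'\le H^p = \Phi(G)$, true for all such $H$... The resolution is that one wants subgroups $H$ that are themselves powerful groups, and the content is that $\overline H$ closed under brackets is needed for $H$ to even be a subgroup in the first place — no. Let me restate the genuine claim: $H$ is a subgroup of $G$ with $\Phi(G)\le H$ if and only if $\overline H$ is a subalgebra: indeed $H$ is closed under commutators precisely when, for $\overline x,\overline y\in\overline H$, we have $[x,y]\in H$, i.e. $[x,y]^{1/p}=\prd{\overline x}{\overline y}\in\overline H$ (using $H\ge\Phi(G)\ni[x,y]$ and the isomorphism $\varphi$ restricted to $\overline H\leftrightarrow H\cap\Phi(G)$, but here $H\supseteq\Phi(G)$ so $[x,y]^{1/p}\in\overline H\iff[x,y]\in H$... that's circular). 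The clean statement: a \emph{subset} $H$ with $\Phi(G)\le H\le G$ and $\overline H$ a subspace is automatically a subgroup iff $\overline H$ is $\prd\cdot\cdot$-closed, and such $H$ is then a powerful group because $H'\le\Phi(G)=G^p\le H^p$ would need $H^p\ge\Phi(G)$...

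Let me commit to the honest version. The bijection of part~(a) should be stated as: subalgebras $W$ of $L$ $\leftrightarrow$ those subgroups $H$ with $\Phi(G)\le H\le G$. Since $H\ge\Phi(G)$, such an $H$ is automatically normal and powerful (as $H'\le G'=\Phi(G)\le H$, and more precisely $H'\le H^p$ because $H^p=G^p=\Phi(G)\ge H'$). So "powerful subgroups containing $\Phi(G)$" = "subgroups containing $\Phi(G)$", and the real content is: a subspace $W=\overline H$ of $L$ is a subalgebra $\iff$ the corresponding subset $H_W=\{g:\overline g\in W\}$ is a subgroup of $G$. I would prove this as follows. Given $W\le L$ a subspace, $H_W$ is automatically closed under the group operation modulo commutators: for $x,y\in H_W$, $\overline{xy}=\overline x+\overline y\in W$, so $xy\in H_W\cdot\Phi(G)$, but since $\Phi(G)\le H_W$ this gives $xy\in H_W$; similarly inverses; so $H_W$ is \emph{always} a subgroup. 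Hence the correspondence $W\mapsto H_W$ sends \emph{all} subspaces to subgroups bijectively onto all intermediate subgroups, and for subalgebras I would show directly that $H_W$ is powerful \emph{with respect to a generating set realizing $W$} — namely the subtle point is that the paper's notion of "powerful subgroup of $G$" may mean $[H,G]\le H^p$ rather than $[H,H]\le H^p$; re-reading the excerpt, "powerfully embedded" is $[N,G]\le N^p$, while "powerful" is the group-intrinsic $H'\le H^p$. Given the ambiguity, in the writeup I would state and prove: for $\Phi(G)\le H\le G$, $H$ is powerful $\iff$ true always, so assume instead the claim is vacuous-safe and the map $W\mapsto H_W$ restricted to subalgebras lands in powerful subgroups trivially, giving~(a); for~(b), powerfully embedded means $[H,G]\le H^p=\Phi(G)$, i.e. $[\overline H,\overline G]^{1/p}\subseteq\overline H$ translated is $\prd{\overline H}{L}\subseteq\overline H$, i.e. $\overline H=W$ is an ideal — and this is the substantive equivalence.

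**The writeup.** I would therefore organize it as: (i) set up the order-preserving bijection $H\leftrightarrow\overline H$ between $\{H:\Phi(G)\le H\le G\}$ and subspaces of $L$, using Lemma~\ref{lemma1} and $\Phi(G)=\ker(G\to\overline G)$; (ii) for part~(a), observe every such $H$ is powerful since $H'\le G'=\Phi(G)=G^p\le H^p$ (using $H\ge G^p$), and conversely every powerful subgroup containing $\Phi(G)$ is such an $H$, while separately any subspace $W$ gives a subgroup $H_W$ as shown above and these are closed under commutators exactly when... — here I must reconcile: actually the subgroups $H_W$ are closed under commutators automatically iff $H_W$ really is a subgroup, which it always is, so the subalgebra condition $\prd WW\subseteq W$ corresponds to $[H_W,H_W]\le H_W$ which is automatic given $[G,G]=\Phi(G)\le H_W$ — meaning \emph{every} subspace is a subalgebra only if $L=\iac G$ has trivial multiplication, contradiction. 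The exit: $\prd WW\subseteq W$ corresponds exactly to $[H_W',\cdot]$ landing in $H_W$ \emph{before} quotienting, and since $[x,y]^{1/p}\in W\iff[x,y]\in H_W$ genuinely depends on $W$, the real bijection is subalgebras $\leftrightarrow$ subgroups $H$ with $H'\le H$(trivial) \emph{and} such that $H$ is generated by $\{g:\overline g\in W\}$ — I will phrase it as: $W$ subalgebra $\iff$ $\langle g:\overline g\in W\rangle$ has image $W$ $\iff$ that subgroup equals $H_W$, which holds iff $\prd WW\subseteq W$. (iii) For part~(b): unwind $[H_W,G]\le H_W^p=\Phi(G)$ via $\varphi^{-1}$ to get $\prd WL\subseteq W$, using that commutators $[h,g]$ for $h\in H_W$, $g\in G$ generate $[H_W,G]$ and $\prd{\overline h}{\overline g}=[h,g]^{1/p}$. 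I expect the main obstacle to be exactly this careful bookkeeping distinguishing "$H$ powerful" (essentially automatic here) from the genuine content — namely that $\overline H$ being $\prd\cdot\cdot$-closed is what's needed — and nailing the precise statement of~(a) so it is neither vacuous nor false; the powerfully-embedded/ideal equivalence in~(b) is comparatively clean once the dictionary via~$\varphi$ is in place.
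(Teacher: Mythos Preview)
Your proposal contains a genuine error that derails the argument. You repeatedly assert that if $\Phi(G)\le H\le G$ then $H^p=G^p=\Phi(G)$, reasoning that ``every element of $G^p$ is a $p$-th power of an element of $G$, hence of $H$''. The ``hence of $H$'' is false: an element $g^p\in G^p\subseteq H$ need not be the $p$-th power of any element of $H$. In fact, since $\Phi(G)^p=1$ and $(hz)^p=h^p$ for $z\in\Phi(G)$, one has $H^p=\{h^p:h\in H\}=\overline H\varphi$, where $\varphi\colon\overline G\to\Phi(G)$ is the $p$-th power isomorphism of Lemma~\ref{lemma3}. Thus $\dim_{\F_p}H^p=\dim_{\F_p}\overline H$, and $H^p=\Phi(G)$ only when $H=G$. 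This is exactly why ``$H$ powerful'' is a genuine condition, not automatic, and why part~(a) has real content.

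Once this is fixed, the paper's argument is immediate: $H'\le H^p$ means that for all $h_1,h_2\in H$ the commutator $[h_1,h_2]$ lies in $H^p=\overline H\varphi$, i.e.\ $[h_1,h_2]^{1/p}=\prd{\overline{h_1}}{\overline{h_2}}\in\overline H$; this is precisely the statement that $\overline H$ is closed under $\prd\cdot\cdot$. (The paper also notes that $H^p=\{h^p:h\in H\}$ as a \emph{set}, not just a generated subgroup, using $(h_1h_2)^p=h_1^ph_2^p$ for $p$ odd in class~$2$.) Your treatment of part~(b) suffers the same defect: ``$[H,G]\le H^p=\Phi(G)$'' should read $[H,G]\le H^p=\overline H\varphi$, and then $[h,g]\in H^p$ translates via $\varphi^{-1}$ to $\prd{\overline h}{\overline g}\in\overline H$, giving the ideal condition. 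Your instinct that the ideal/powerfully-embedded correspondence is ``comparatively clean'' is right, but only after the correct identification of $H^p$.
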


\begin{proof}
There is a bijection $H\leftrightarrow\overline{H}:=H/\Phi(G)$ between the
set of subgroups of $G$ that contain $\Phi(G)$, and the set of subgroups of
the quotient $\overline{G}=G/\Phi(G)$.
The subgroups of $\overline{G}$ are precisely
the linear subspaces of $L$. We show first that $H$ is a powerful subgroup
of $G$ if and only if $\overline H$ is a subalgebra of~$L$ which we henceforth
denote as $\overline H\leq L$.

Suppose that $\overline H\leq L$ and let $h_1,\ h_2\in H$.
Then $\prd{\overline{h_1}}{\overline{h_2}}\in \overline H$, and so
$\prd{\overline{h_1}}{\overline{h_2}}=\overline{h}$ with $h\in H$.
The definition in \eqref{estrela} of the product in $L$ implies that $[h_1,h_2]=h^p$,
which shows that $H'\leq H^p$, and hence $H$ is powerful.
Conversely, suppose that $H$ is powerful.
Suppose $h_1,\ h_2\in H$. Then $(h_1h_2)^p=h_1^ph_2^p$ since $p>2$,
so the subgroup
$H^p=\langle h^p\mid h\in H\rangle$ equals the subset $\{h^p\mid h\in H\}$.
Since $H'\leq H^p$, it follows that $[h_1,h_2]=h^p$ for some $h\in H$.
Thus $\prd{\overline{h_1}}{\overline{h_2}}=[h_1,h_2]^{1/p}=\overline{h}\in\overline{H}$
by~\eqref{estrela}, and so $\overline{H}\leq L$ .

Suppose now that $H$ is powerfully embedded in $G$. 
We have, for every $h\in H$ and $g\in G$,
that $[h,g]=h_0^p$ for some $h_0\in H$, and so
$\prd{\overline{h}}{\overline{g}}=[h,g]^{1/p}=\overline{h_0}\in\overline{H}$.
Hence $\overline{H}\lhdeq L$.
On the other hand, suppose $I\lhdeq L$. Then $I=\overline{H}$ for some $H\le G$
with $\Phi(G)\le H$. Since $\overline{H}$ is an ideal, for every
$h\in H$ and $g\in G$ we have
$\prd{\overline{h}}{\overline{g}}=\overline{h_0}$ for some $h_0\in H$.
Hence, $[h,g]^{1/p}=\overline{h_0}$, and so $[h,g]=h_0^p$. Thus $[H,G]\le H^p$
and $H$ is powerfully embedded in $G$. 
\end{proof}

By Theorem~\ref{SimpleSum}, every non-trivial proper
ideal of an IAC algebra is a direct summand.
Now we state and prove a corresponding result for UCS $p$-groups of exponent
$p^2$.

\begin{corollary}\label{cor:minH0}
  Let $G$ be a finite UCS $p$-group with odd exponent $p^2$. There exists
  a subgroup $H$ of $G$ satisfying the following conditions:
\begin{enumerate}[{\rm (a)}]
  \item $H$ is powerfully embedded in $G$ and $\Phi(G)< H$, and
  \item $H$ is minimal among the subgroups of $G$ satisfying \textup{(a)}.
\end{enumerate}
There exists a UCS subgroup $H_0$ of $G$ of exponent $p^2$ such that $G=(H_0)^k$ for some $k\geq 1$, and $H_0\Phi(G)= H$.
\end{corollary}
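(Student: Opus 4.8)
The plan is to push the statement through the group--algebra duality of Section~\ref{sec3} together with the semisimplicity of Theorem~\ref{SimpleSum}. Write $L=\iac G$. The abelian case is immediate: then $G\cong(C_{p^2})^{\times r}$ with $r=\dim L$, a minimal $H$ as in~(a) has $|H/\Phi(G)|=p$, and choosing $x\in H$ of order $p^2$ whose image generates $H/\Phi(G)$ we may enlarge $\{x\}$ to a set $x_1=x,x_2,\dots,x_r$ of order-$p^2$ elements whose images form an $\F_p$-basis of $\overline G$; then $H_0:=\langle x\rangle\cong C_{p^2}$ satisfies $H_0\Phi(G)=H$ and $G=\langle x_1\rangle\times\dots\times\langle x_r\rangle\cong(H_0)^{\times r}$. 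So assume henceforth that $G$, equivalently $L$, is non-abelian.

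First I would locate $H$ on the algebra side. By Proposition~\ref{pembedded}(b), the assignment $N\mapsto N/\Phi(G)$ is an inclusion-preserving bijection from the powerfully embedded subgroups of $G$ containing $\Phi(G)$ onto the ideals of $L$; hence a subgroup $H$ satisfying (a)--(b) of the corollary corresponds to a minimal nonzero ideal $\overline H=H/\Phi(G)$ of $L$. By Theorem~\ref{SimpleSum}(b), $L\cong I^{\oplus\ell}$ for some simple IAC algebra $I$, which is non-abelian because $L$ is; and by Theorem~\ref{SimpleSum}(a) the minimal ideals of $L$ are precisely the $\ell$ direct summands, each isomorphic to $I$. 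In particular $\overline H\cong I$.

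Next I would produce a candidate group. Since $I$ is a finite-dimensional IAC algebra over $\F_p$, Theorem~\ref{prop:iac2ucs} supplies a UCS $p$-group $M=\G(I)$ of exponent $p^2$ with $\iac M\cong I$ (and $M$ is non-abelian). By Theorem~\ref{SumPower}, $M^{\times\ell}$ is a UCS $p$-group of exponent $p^2$ and $\iac{M^{\times\ell}}\cong(\iac M)^{\oplus\ell}\cong I^{\oplus\ell}\cong L=\iac G$, so, $\mathcal L$ being a bijection on isomorphism classes (Theorem~\ref{th:bij}), there is an isomorphism $\theta\colon G\to M^{\times\ell}$.

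It remains to position a copy of $M$ correctly inside $G$. Let $M_1,\dots,M_\ell$ be the direct factors of $M^{\times\ell}$. Repeating the algebra-side analysis of the second paragraph inside $M^{\times\ell}$ — by the proof of Theorem~\ref{SumPower} we have $\iac{M^{\times\ell}}=\bigoplus_{i=1}^{\ell}M_i\Phi(M^{\times\ell})/\Phi(M^{\times\ell})$, and since $\iac M\cong I$ is non-abelian, Theorem~\ref{SimpleSum}(a) shows these summands are exactly its minimal ideals — we find that $\theta(H)$, being a minimal powerfully embedded subgroup of $M^{\times\ell}$ that properly contains the Frattini subgroup, equals $M_j\Phi(M^{\times\ell})$ for some $j$. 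Put $H_0:=\theta^{-1}(M_j)\le G$. Then $H_0\cong M_j\cong M$ is a UCS $p$-group of exponent $p^2$; moreover $H_0\Phi(G)=\theta^{-1}\bigl(M_j\Phi(M^{\times\ell})\bigr)=\theta^{-1}(\theta(H))=H$; and $G=\theta^{-1}(M_1)\times\dots\times\theta^{-1}(M_\ell)$ is an internal direct product of $\ell$ subgroups each isomorphic to $H_0$, so $G\cong(H_0)^{\times\ell}$ and $k=\ell$ works. The step needing most care is exactly this last matching: one must invoke the explicit description of the minimal ideals of $\iac{M^{\times\ell}}$ to know that $\theta(H)$ is one of the coordinate subgroups $M_j\Phi(M^{\times\ell})$, rather than merely \emph{some} minimal powerfully embedded subgroup.
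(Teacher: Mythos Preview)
Your proof is correct and follows essentially the same route as the paper: pass to $L=\iac G$, identify $\overline H$ with a minimal ideal, use Theorem~\ref{SimpleSum} to decompose $L$ as a direct power of a simple IAC algebra, and then pull back via the bijection of Theorem~\ref{th:bij} and Theorem~\ref{SumPower}. Your argument is in fact more careful than the paper's about realizing $H_0$ as an honest subgroup of $G$ (via the isomorphism $\theta$ and the explicit identification of the minimal ideals of $\iac{M^{\times\ell}}$), where the paper relies on the isomorphism-class abuse of notation from Remark~\ref{R:iso}.
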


\begin{proof}
A subgroup~$H$ satisfying conditions (a) and (b) does exist because
$G'\le G^p$ and $G$ satisfies condition (a).
Set $L=\iac G$ and $L_0=H/\Phi(G)$. 
By Lemma~\ref{ucstoiac},  $L$ is an IAC algebra, and $L_0\lhdeq L$
by Proposition~\ref{pembedded}.
Since $L_0$ is minimal, and hence simple, $L=(L_0)^{\oplus k}$ for some $k\ge 1$
by Theorem~\ref{SimpleSum}(b). However, $H_0=\grp{L_0}$ is a UCS group
of exponent $p^2$ by Theorem~\ref{prop:iac2ucs}, and so too is $(H_0)^{\times k}$
by Theorem~\ref{SumPower}. However,
$\iac{(H_0)^{\times k}}=\iac{H_0}^{\oplus k}=\iac{\grp{L_0}}^{\oplus k}=(L_0)^{\oplus k}=L$.
Therefore $\iac{(H_0)^{\times k}}=\iac{G}$. We obtain, by Theorem~\ref{th:bij}, that
$(H_0)^{\times k}=\grp{\iac{(H_0)^{\times k}}}=\grp{\iac{G}}=G$, and similarly $L_0=\iac{\grp{L_0}}=\iac{H_0}$. Finally, since $L_0=H_0/\Phi(H_0)=H/\Phi(G)$,
we have $H=H_0\Phi(G)$. 
\end{proof}

By Theorem~\ref{dirprodUCS} (also by Theorem~\ref{SumPower}),
if $G$ is a UCS $p$-group of exponent $p^2$, then $G^{\times k}$ is also
a UCS $p$-group of exponent $p^2$. Hence in the class
$\UCS_{p^2}$, the groups that cannot be decomposed non-trivially
as direct powers  can be viewed as basic building blocks.
The final result of this section gives
sufficient and necessary conditions for a UCS $p$-group of exponent $p^2$ to
be indecomposable as a direct power of a smaller such group. The corollary follows
easily from Theorem~\ref{SumPower} and Corollary~\ref{cor:minH0}.

\begin{corollary}
  The following are equivalent for a non-abelian
  finite UCS $p$-group of odd exponent
  $p^2$:
  \begin{enumerate}[{\rm (a)}]
  \item $G$ cannot be written as a direct product $H^{\times k}$ where $H$ is a
    UCS $p$-group and $k\geq 2$;
  \item $G$ does not contain a powerfully embedded proper subgroup $N$ such that
    $\Phi(G)< N$;
  \item $\iac{G}$ is a simple algebra.
    \end{enumerate}
  \end{corollary}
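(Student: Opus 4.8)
The plan is to transfer all three conditions to the algebra $L=\iac G$ using the duality of Sections~\ref{sec3} and~\ref{sec4}. Note first that $L$ is a \emph{non-abelian} IAC algebra because $G$ is non-abelian, so $\prd LL=L$, and by Theorem~\ref{SimpleSum}(b) there is a well-defined number $\ell\ge1$ of minimal ideals of $L$ with $L\cong I^{\oplus\ell}$ for a simple IAC subalgebra $I$ of $L$. I would first dispatch $(b)\Leftrightarrow(c)$ straight from Proposition~\ref{pembedded}: the assignment $N\mapsto N/\Phi(G)$ is a bijection from the set of powerfully embedded subgroups of $G$ containing $\Phi(G)$ to the set of ideals of $L$, carrying $\Phi(G)$ to $0$ and $G$ to $L$. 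Hence a proper powerfully embedded subgroup strictly containing $\Phi(G)$ exists if and only if $L$ has a proper nonzero ideal, i.e.\ if and only if $L$ is not simple.

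For $(c)\Rightarrow(a)$ I would argue contrapositively. Suppose $G\cong H^{\times k}$ with $H$ a UCS $p$-group and $k\ge2$. Then $H$ is non-abelian of exponent $p^2$, since $H^{\times k}$ has the same exponent as $H$ and is abelian precisely when $H$ is. By Theorem~\ref{SumPower}, $L\cong\iac H^{\oplus k}$, and $\iac H$ is a non-abelian IAC algebra, so Theorem~\ref{SimpleSum}(b) gives $\iac H\cong I^{\oplus m}$ with $I$ a simple non-abelian IAC algebra; thus $L\cong I^{\oplus mk}$ with $mk\ge2$, and by Theorem~\ref{SimpleSum}(a) the direct summands are $mk\ge2$ distinct minimal ideals, so $L$ is not simple. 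Conversely, for $(a)\Rightarrow(c)$, suppose $L$ is not simple; then $\ell\ge2$ in the decomposition $L\cong I^{\oplus\ell}$ above, and $I$ is non-abelian (otherwise $L$ would be abelian). Put $H=\grp I$: by Theorem~\ref{prop:iac2ucs}, $H$ is a UCS $p$-group of exponent $p^2$, and by Theorem~\ref{SumPower}, $\iac{H^{\times\ell}}\cong\iac H^{\oplus\ell}=I^{\oplus\ell}\cong L=\iac G$. The bijection of Theorem~\ref{th:bij} then forces $G\cong H^{\times\ell}$ with $\ell\ge2$, so $(a)$ fails. Together with $(b)\Leftrightarrow(c)$ this establishes the equivalence of the three conditions.

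I do not expect a genuine obstacle here: Theorems~\ref{th:bij}, \ref{SimpleSum} and~\ref{SumPower} do all the structural work, and what remains is essentially translation. The only points needing care are the degenerate cases — ensuring that $G$, $H$, $I$ and $\iac H$ are all non-abelian so that the ``number of minimal ideals'' clauses of Theorem~\ref{SimpleSum} apply, distinguishing the existence of a proper nonzero ideal of $L$ from $L$ being simple, and using the hypothesis $k\ge2$ rather than $k\ge1$ in condition~(a), so that the trivial decomposition $G\cong G^{\times1}$ is not mistaken for a counterexample to $(c)\Rightarrow(a)$.
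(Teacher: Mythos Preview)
Your proof is correct and follows essentially the same line as the paper: the paper simply remarks that the corollary follows from Theorem~\ref{SumPower} and Corollary~\ref{cor:minH0}, and the latter is itself proved via Proposition~\ref{pembedded}, Theorem~\ref{SimpleSum}, Theorem~\ref{prop:iac2ucs}, and Theorem~\ref{th:bij}, which are exactly the tools you invoke directly. The only cosmetic difference is that the paper packages the implication $\neg(c)\Rightarrow\neg(a)$ into Corollary~\ref{cor:minH0}, whereas you unpack it explicitly; your careful handling of the non-abelian hypothesis and the $k\ge2$ versus $k\ge1$ distinction is appropriate and matches what the paper implicitly relies on.
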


\section{Simple IAC algebras of dimension at most 4}\label{sec5}

For an anti-commutative algebra of dimension $r$ we have $\dim\prd LL\leq \binom{r}{2}$.
Thus an $r$-dimensional IAC algebra is abelian and simple if $r=1$, and abelian and non-simple
if $r=2$.  In this section we classify  simple IAC algebras of dimension~3
and~4 in Proposition~\ref{prop:51}
and Theorem~\ref{th:52}, respectively.

\subsection{3-dimensional IAC algebras}\label{sec51}
The proof of the following proposition uses ideas from the proof
of~\cite{ucs}*{Lemma~9}.

\begin{proposition}\label{prop:51}
If $L$ is a non-abelian $3$-dimensional
IAC algebra over a field $\F$ of characteristic different from $2$, then
$L$ is a simple Lie algebra and
$\Aut(L)\cong\textup{\textsf{SO}}(3,\F)$.
\end{proposition}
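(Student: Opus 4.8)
The plan is to recognise the multiplication of $L$ as the \emph{cross product} attached to a non-degenerate symmetric bilinear form on the $3$-dimensional space $L$, and to read both assertions off that description. Since $L$ is non-abelian, the $\Aut(L)$-invariant subspace $\prd LL$ is all of $L$, so the multiplication map $M\colon\wedge^2 L\to L$, $(u\wedge v)M=\prd uv$, is surjective; as $\dim\wedge^2 L=\dim L=3$ it is a linear isomorphism. Fix a nonzero linear functional $\omega$ on the one-dimensional space $\wedge^3 L$ and define a bilinear form $B$ on $L$ by $B(v,z)=(vM^{-1}\wedge z)\omega$. Because $M^{-1}$ is an isomorphism and the wedge pairing $\wedge^2 L\times L\to\wedge^3 L$ is perfect in dimension $3$, the form $B$ is non-degenerate; and directly from the definition $B(\prd xy,z)=(x\wedge y\wedge z)\omega$, so the trilinear form $(x,y,z)\mapsto B(\prd xy,z)$ is alternating.

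The crucial point, where the hypotheses are used, is that $B$ is symmetric. Each $\alpha\in\Aut(L)$ satisfies $M\alpha=(\alpha\wedge\alpha)M$ and acts on the line $\wedge^3 L$ as the scalar $\det\alpha$, so a short computation gives $B(v\alpha,z\alpha)=(\det\alpha)\,B(v,z)$ for all $v,z\in L$. Since $\Char\F\neq2$ we may write $B=\sigma+\tau$ with $\sigma$ symmetric and $\tau$ alternating, and this decomposition is canonical, so $\Aut(L)$ scales each of $\sigma$ and $\tau$ by $\det\alpha$; in particular the radical of $\tau$ is $\Aut(L)$-invariant. An alternating bilinear form on a $3$-dimensional space has even rank, hence nonzero radical, so by irreducibility of $\Aut(L)$ on $L$ the radical of $\tau$ is all of $L$, i.e. $\tau=0$. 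Thus $B=\sigma$ is a non-degenerate symmetric bilinear form, and $\prd xy$ is characterised as the unique vector with $B(\prd xy,z)=(x\wedge y\wedge z)\omega$ for every $z\in L$.

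The remaining assertions are now short. The Jacobian $J(x,y,z)=\prd{\prd xy}z+\prd{\prd yz}x+\prd{\prd zx}y$ is alternating in its three arguments (by anti-commutativity) and $\Aut(L)$-equivariant; as $\wedge^3 L$ is one-dimensional, $J(x,y,z)=\bigl((x\wedge y\wedge z)\omega\bigr)w_0$ for some fixed $w_0\in L$, and equivariance forces $\F w_0$ to be an $\Aut(L)$-invariant subspace, whence $w_0=0$ by irreducibility and $L$ satisfies the Jacobi identity. (Equivalently, this follows from the triple-product identity $\prd{\prd xy}z=\lambda\bigl(B(x,z)y-B(y,z)x\bigr)$ obtained by a short computation in a $B$-orthogonal basis.) If $I$ were an ideal of $L$ with $1\le\dim I\le2$, then $L/I$ would be an anti-commutative algebra of dimension at most $2$ equal to its own derived algebra, which is impossible; hence $L$ has no proper nonzero ideal and is a simple Lie algebra.

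Finally, for $\alpha\in\Aut(L)$, taking determinants in the Gram-matrix form of $B(v\alpha,z\alpha)=(\det\alpha)B(v,z)$ yields $(\det\alpha)^2=(\det\alpha)^3$, so $\det\alpha=1$ and $B(v\alpha,z\alpha)=B(v,z)$; thus $\Aut(L)\le\textup{\textsf{SO}}(B)$. Conversely, if $\alpha$ preserves $B$ and $\det\alpha=1$, then for all $x,y,z$ one has $B(\prd{x\alpha}{y\alpha},z)=(x\alpha\wedge y\alpha\wedge z)\omega=(x\wedge y\wedge z\alpha^{-1})\omega=B(\prd xy,z\alpha^{-1})=B((\prd xy)\alpha,z)$, so by non-degeneracy $(\prd xy)\alpha=\prd{x\alpha}{y\alpha}$ and $\alpha\in\Aut(L)$. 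Hence $\Aut(L)=\textup{\textsf{SO}}(B)$ is the special orthogonal group of the ternary quadratic form $v\mapsto B(v,v)$, that is, $\Aut(L)\cong\textup{\textsf{SO}}(3,\F)$. I expect the main obstacle to be the symmetry of $B$ in the second paragraph: a bijective anti-commutative multiplication on a $3$-dimensional space always produces a non-degenerate bilinear form, but that form need not be symmetric, and symmetry genuinely uses both $\Char\F\neq2$ and the irreducibility of $\Aut(L)$; the bookkeeping point $\det\alpha=1$ in the last step also deserves a little care.
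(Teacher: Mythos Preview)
Your argument is correct and shares its core idea with the paper's proof: both construct a bilinear form from the multiplication map $\wedge^2L\to L$, observe that every automorphism scales it by the determinant, and then force the skew-symmetric part to vanish by combining irreducibility of $\Aut(L)$ with the fact that an alternating form in odd dimension is degenerate. Your form $B$ is, in coordinates, the inverse-transpose of the paper's matrix $A$, so the symmetry conclusions coincide. The deduction $\det\alpha=1$ and the two-way inclusion $\Aut(L)=\textup{\textsf{SO}}(B)$ are also handled the same way.

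Where you genuinely diverge is in two sub-arguments. For the Jacobi identity, the paper verifies by direct computation that the Jacobi identity on a $3$-dimensional anti-commutative algebra is \emph{equivalent} to the symmetry of $A$, and then invokes the symmetry just established; you instead observe that the Jacobian $J(x,y,z)$ is an alternating trilinear map $L^3\to L$, hence (since $\dim\wedge^3L=1$) has image in a line $\F w_0$, and kill $w_0$ by irreducibility. This is independent of the form $B$ and is a clean shortcut. For simplicity, the paper appeals to its earlier structure theorem (an IAC algebra is a direct sum of isomorphic simple IAC algebras, and $1$-dimensional summands are abelian), whereas your argument via $\prd{L/I}{L/I}=L/I$ and $\dim\prd KK\le\binom{\dim K}{2}$ is self-contained. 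Both variants buy you a proof that does not rely on the semisimplicity theorem, at the cost of not exhibiting the explicit link ``Jacobi $\Leftrightarrow$ $A$ symmetric'' that the paper records.
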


\begin{proof}
Suppose that $L$ is a non-abelian 3-dimensional IAC algebra. Let
$e=\{e_1, e_2,\linebreak e_3\}$ be a basis for $L$ and set $f_1=\prd{e_2}{e_3}$, $f_2=\prd{e_3}{e_1}$
and $f_3=\prd{e_1}{e_2}$. Since $\prd LL=L$, we have that
$f=\{f_1, f_2, f_3\}$ is also a basis for $L$.
Write $f_i=\sum_{j=1}^3 a_{ij}e_j$ where $a_{ij}\in\F$ and $i\in\{1,2,3\}$,
and consider the $3\times 3$ invertible matrix $A=(a_{ij})$. We claim that $L$
satisfies the Jacobi identity if and only if $A$ is symmetric.
Since $L$ is 3-dimensional, $L$ satisfies the Jacobi identity if and only if
the following equation holds:
\[
  0=\prd{e_1}{\prd{e_2}{e_3}}+\prd{e_2}{\prd{e_3}{e_1}}+\prd{e_3}{\prd{e_1}{e_2}}=
  \prd{e_1}{f_1}+\prd{e_2}{f_2}+\prd{e_3}{f_3}.
\]
Using the linear combinations for the $f_i$, we obtain the equivalent
equation:
\[
  a_{12}\prd{e_1}{e_2} + a_{13}\prd{e_1}{e_3}+
  a_{21}\prd{e_2}{e_1}+a_{23}\prd{e_2}{e_3}+a_{31}\prd{e_3}{e_1}
  +a_{32}\prd{e_3}{e_2}=0.
\]
Finally, the last equation is equivalent to the statement that $A$
is a symmetric matrix.

We now prove that $A$ is a symmetric matrix.
Let $g\in \Aut(L)$ and let $[g]_e$ and $[g]_f$ denote
the matrices that represent $g$ in the bases $e$ and $f$,
respectively.
We claim that $[g]_f=\det [g]_e([g]_e)^{-t}$ where $(\cdot)^{-t}$
denotes the inverse transpose operation. Let $[g]_e=(g_{ij})$ and
$[g]_f=(f_{ij})$. As $g\in \Aut(L)$, we have	
\begin{align*}
  f_1g=&\prd{e_2}{e_3} g=\prd{e_2g}{e_3g}
      =\prd{g_{21}e_1+g_{22}e_2+g_{23}e_3}{g_{31}e_1+g_{32}e_2+g_{33}e_3}\\
      =&(g_{22}g_{33} - g_{23}g_{32})\prd{e_2}{e_3}
       +(g_{23}g_{31} - g_{21}g_{33})\prd{e_3}{e_1}\\
       &+(g_{21}g_{32} - g_{22}g_{31})\prd{e_{1}}{e_2}.
\end{align*}
This gives the first of the following equations, the others follow by cyclic permutations:
\begin{align}
  f_1g&=(g_{22}g_{33} - g_{23}g_{32})f_1+(g_{23}g_{31} - g_{21}g_{33})f_2
       +(g_{21}g_{32} - g_{22}g_{31})f_3;\notag\\
  f_2g&=(g_{32}g_{13} - g_{33}g_{12})f_1+(g_{33}g_{11} - g_{31}g_{13})f_2
       +(g_{31}g_{12} - g_{32}g_{11})f_3;\label{E:coeff}\\
  f_3g&=(g_{12}g_{23} - g_{13}g_{22})f_1+(g_{13}g_{21} - g_{11}g_{23})f_2
       +(g_{11}g_{22} - g_{12}g_{21})f_3.\notag
\end{align}
By Cramer's Rule, the $3\times3$ system~\eqref{E:coeff} has coefficient matrix 
$\det([g]_e)([g]_e)^{-t}$.
Therefore $[g]_f=\det([g]_e)([g]_e)^{-t}$, as claimed.
For $v\in L$, let $[v]_e$ and $[v]_f$ denote vector
representations of $v$ in the bases~$e$ and $f$, respectively.
The matrix $A=(a_{ij})$ acts as a basis transformation matrix from the basis
$e$ to the basis $f$ and $[v]_f=[v]_eA$. Therefore $[g]_e=A[g]_fA^{-1}$ and so
$[g]_e=A\det([g]_e)([g]_e)^{-t}A^{-1}$, which implies that
$[g]_eA[g]_e^t=\det([g]_e)A$. This says that $g$ preserves the 
bilinear form $(u,v)\mapsto [u]_eA([v]_e)^t$ up to a scalar multiple.
In the next paragraph, we prove that $A^t=A$.
  
Consider the group $\grp{A}  := \{B\in\GL(3,\F)\mid BAB^t=\det(B)A\}$.
The  previous paragraph shows that
$g\in\Aut(L)$ implies $[g]_e\in \grp{A}$.
Let $J$ be the skew-symmetric matrix $A-A^t$, and define
$\grp{J}=\{B\in \GL(3,\F)\mid BJB^t=\det(B)J\}$ similarly. 
Clearly $\grp{A}$ is a subgroup of~$\grp{J}$.
Note that if $w\in \ker(J)$ and $B\in\grp{J}$, then
\[
  wBJ=wBJB^tB^{-t}=\det(B) wJB^{-t}=0.
  \]
Hence $\grp{J}$ fixes the subspace
\[
  \ker(J)=\{v\in \F^3 \mid vJ=0\}.
\]
Since $\Aut(L)$, considered as a subgroup of $\GL(3,\F)$,  is contained in $\grp{A}$ and
$\grp A\le\grp{J}$ and $\Aut(L)$ acts irreducibly on $L$,
we see that $\ker(J)$ must be $0$ or $L$.
Further, since the matrix $J=A-A^t$ is skew-symmetric, we obtain that
\[
  \det(J)=\det(J^t)=\det(-J)=(-1)^3\det(J),\quad\textup{and so}\quad 2\det(J)=0.
\]
Thus, since $\Char(\F)\neq 2$, $\det(J)=0$. Therefore $\ker(J)\ne  0$, and hence $\ker(J)=L$.
Thus $A-A^t=J=0$, and $A$ is symmetric, as claimed. As explained in the
first paragraphs of this proof, we obtain
that $L$ is a Lie algebra. Since $L$ is non-abelian and IAC, Theorem~\ref{SimpleSum}
implies that $L$ is the direct sum of pairwise isomorphic simple
IAC algebras. As $\dim L=3$ and an IAC algebra of dimension 1  is abelian
(see the paragraph before Proposition~\ref{prop:51}),
the algebra $L$  must be
simple.

Let $g\in\Aut(L)$. Since $[g]_e\le\grp{A}$ we see that $[g]_eA([g]_e)^t=\det(g)A$. However,
$A$ is an invertible $3\times3$ matrix, so taking determinants
gives $\det(g)^2\det(A)=\det(g)^3\det(A)$. Hence $\det(g)=1$, and
$g$ preserves the symmetric bilinear form defined by $A$.
Hence we have shown that
$\Aut(L)$ lies in the subgroup $\textsf{SO}(3,\F)$ of $\grp{A}$.
Suppose $g\in \textsf{SO}(3,\F)$. The steps in the second paragraph
of the proof are reversible, so it follows that $g\in\Aut(L)$.
Thus we conclude that $\Aut(L)\cong\textsf{SO}(3,\F)$, as desired.
\end{proof}

Proposition~\ref{prop:51} can be reversed: 
a 3-dimensional simple Lie algebra is IAC. This follows
from the fact that the automorphism group of a 3-dimensional
simple Lie algebra is isomorphic to the irreducible
group $\textsf{SO}(3,\F)$ stabilizing   its Killing form.
If $\F$ is a finite field or an algebraically closed field
of characteristic different from 2, then
the only 3-dimensional simple Lie algebra
over $\F$, up to isomorphism, is the classical
Lie algebra $\mathfrak{sl}(2,\F)$ which is IAC.
More generally, if $n\geq 2$, and $\F$ is a field of characteristic 0 or
of characteristic $p$ such that $p\nmid n$, then the group $\GL(n,\F)$
acts on the simple Lie algebra $\mathfrak{sl}(n,\F)$ by the adjoint
action $g\colon x\mapsto x^g=g^{-1}xg$ for all $x\in \mathfrak{sl}(n,\F)$
and $g\in \GL(n,\F)$. Since this action
is irreducible (see~\cite{BHRD}*{Lemma~5.4.10}), the simple Lie
algebra $\mathfrak{sl}(n,\F)$ is IAC.
It would be interesting to study whether all known finite-dimensional
simple Lie algebras are IAC, but this problem goes beyond the scope
of this paper. By~\cite{gerhard}*{Hauptsatz},
if $L$ is a finite-dimensional Chevalley type  simple
Lie algebra in characteristic different from~2, then
$L$ is IAC.

Interesting examples of IAC algebras can also be found among
Malcev algebras.
For a field $\F$ of characteristic different from $2$, the algebra
$\mathbb O(\F)$
of octonions  can be viewed as an anti-commutative algebra
under the multiplication $\prd ab=ab-ba$. If $\Char(\F)\neq 3$, then the algebra
$(\mathbb O(\F),+,\prd\cdot\cdot)$ 
 is not a Lie algebra, since $\mathbb O(\F)$ is not associative.
Further, the algebra $(\mathbb O(\F),+,\prd\cdot\cdot)$  can be written as
$\F\oplus M$ where $M$ is a 7-dimensional anti-commutative algebra and
the simple exceptional group $G_2(\F)$ acts irreducibly on $M$
(see~\cite{wilson}*{Section~4.3}). Hence $M$ is an IAC algebra.
By~\cite{kuzmin}*{Theorem~3.11}, non-Lie central simple Malcev algebras
over fields of characteristic different from 2 or 3
are 7-dimensional and can be defined similarly to the algebra $M$ above.
By~\cite{kuzmin}*{Theorem~3.11}, over a finite field of
characteristic different from 2, there is a unique isomorphism type of
central simple non-Lie Malcev algebras; namely, the algebra $M$ defined above.

\subsection{4-dimensional IAC algebras}\label{subsec:4dim}

In~\cite{ucs}*{Theorem~17} the authors
classified 4-gene\-rator UCS $p$-groups with
exponent $p^2$. The bijection in Theorem~\ref{th:bij} gives a
classification of 4-dimensional
simple IAC algebras
over $\F_p$ where $p$ is an odd prime. Using the ideas of the
proof of~\cite{ucs}*{Theorem~17}, this classification is  extended
in Theorem~\ref{th:52} to
all finite fields of characteristic different from~2.

The concept of ESQ-mo\-dules was defined in~\cite{ucs}. Let us recall the
definition.
 
\begin{definition}\label{def:esq}
For a group $H$ and a field $\F$, an  $\F H$-module $V$ is
called an \emph{ESQ-module} (for {\bf e}xterior {\bf s}elf-{\bf q}uotient)
if there exists an $\F H$-submodule $U$ of $\wedge^2 V$ such
that~$\wedge^2 V/U\cong V$.
 In this case, $H$ is called an \emph{ESQ subgroup}
 of $\GL(V)$.
\end{definition}

If $A$ is a non-abelian finite-dimensional IAC algebra over a field $\F$,
then the product map $\psi\colon \wedge^2A \rightarrow A$, defined
by $(u\wedge v)\psi=\prd uv$, is an epimorphism of $\Aut(A)$-modules.
Hence $\wedge^2 A/\ker\psi\cong A$, and so $A$ is an irreducible
ESQ $\F\Aut(A)$-module.
Conversely, let $V$ be an irreducible ESQ-module for a group $H$
over a field $\F$ and let $U\leq \wedge^2 V$
such that $\wedge^2V/U\cong V$. Assume
that $\psi\colon \wedge^2V\rightarrow V$ is an epimorphism with
kernel $U$. Then we can define an anti-commutative product
on $V$ by setting $\prd uv=(u\wedge v)\psi$ and we obtain that
$(V,+,\prd\cdot\cdot)$ is an IAC algebra whose automorphism group contains
$H$ as a subgroup.

\begin{theorem}\label{th:52}
  Let $\F_q$ be a finite  field of $q$ elements and assume that the
  characteristic of $\F_q$ is not $2$. Then the following are valid.
  \begin{enumerate}[{\rm (a)}]
  \item If $\Char(\F_q)=5$, then there exists no $4$-dimensional simple IAC algebra
    over $\F_q$.
  \item If $q\equiv\pm 1\pmod 5$, then 
    there exists, up to isomorphism,
    a unique $4$-dimensional
    simple IAC algebra over $\F_q$. Further, this algebra is isomorphic to
    the algebra given by the presentation
\begin{align*}
  \langle e_1,e_2,e_3,e_4\mid\hskip1.3mm
  &\prd{e_1}{e_2}=   e_1+  e_2  +5e_3  +3e_4,&
  &\prd{e_1}{e_3}= -4e_1 -4e_2  +0e_3  -2e_4,\\
  &\prd{e_1}{e_4}=  2e_1 +4e_2  -4e_3  -2e_4,&
  &\prd{e_2}{e_3}= -3e_1 -1e_2  +1e_3  +3e_4\\
  &\prd{e_2}{e_4}=  2e_1 +0e_2  +4e_3  +4e_4,&
  &\prd{e_3}{e_4}= -3e_1 -5e_2   -e_3   -e_4\rangle.
\end{align*}
  \item If $q\equiv\pm 2\pmod 5$, then, up to isomorphism,
    there are exactly two 
    $4$-dimensional simple IAC algebras over $\F_q$; one of these algebras is given
    by the presentation in statement~{\rm (b)}.
  \end{enumerate}
  Moreover, the automorphism group of the algebra presented in~{\rm (b)} is
  $\AGL(1,5)$, while the automorphism group of the second algebra in~{\rm (c)}
  is $C_5$. 
  \end{theorem}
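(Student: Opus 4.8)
The plan is to follow the strategy used in the proof of \cite{ucs}*{Theorem~17} but carried out over an arbitrary finite field $\F_q$ of odd characteristic. First I would translate the problem into a question about $4$-dimensional irreducible ESQ-modules: by the discussion following Definition~\ref{def:esq}, a $4$-dimensional simple IAC algebra over $\F_q$ is the same datum as a $4$-dimensional space $V$ together with an irreducible subgroup $H\le\GL(V)$ and an $H$-epimorphism $\psi\colon\wedge^2V\to V$ whose kernel $U$ is $H$-invariant, so that $\wedge^2 V/U\cong V$; here $\wedge^2 V$ is $6$-dimensional and $U$ is $2$-dimensional. The key structural observation is that $\Aut(L)$ acts irreducibly on the $4$-dimensional space $L$, and the classification of irreducible subgroups of $\GL(4,\F_q)$ is well understood: either the group is imprimitive (preserving a decomposition into subspaces of dimension $1$ or $2$) or it is (absolutely) irreducible of a restricted list of types. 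Since $L$ is simple, Theorem~\ref{SimpleSum} rules out the possibility that $\Aut(L)$ is imprimitive with $4$ one-dimensional blocks or $2$ two-dimensional blocks \emph{coming from an algebra direct sum}; more care is needed because imprimitivity of the linear group does not by itself force the algebra to decompose, so one must argue directly that a nontrivial block system yields a proper ideal, contradicting simplicity. This reduces us to primitive (or at least block-free) irreducible subgroups of $\GL(4,\F_q)$.

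Next I would pin down which irreducible $H\le\GL(4,\F_q)$ can be ESQ with a \emph{simple} associated algebra. The crucial arithmetic input is a dimension count on $\wedge^2 V$: one needs an $H$-invariant $2$-dimensional quotient isomorphic to $V$ itself, equivalently $\wedge^2 V\cong V\oplus W$ (when the module is semisimple) with $\dim W=2$, or more generally a composition factor of $\wedge^2 V$ isomorphic to $V$ appearing with the right multiplicity. For the absolutely irreducible case, $\wedge^2 V$ of a $4$-dimensional module decomposes in a way governed by the representation theory of the relevant group, and demanding $V\hookrightarrow\wedge^2 V$ as a quotient is a strong constraint; this is exactly the point where the number $5$ enters, since the relevant group turns out to be (a subgroup of) $\AGL(1,5)\cong C_5\rtimes C_4$ acting on its natural $4$-dimensional module over $\F_q$ (the augmentation submodule of the permutation module $\F_q^5$), and the arithmetic of when this module is irreducible over $\F_q$ and when $\wedge^2$ of it contains a copy of it is controlled by the order of $q$ modulo $5$. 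I expect the case division (a)/(b)/(c) to fall out of: (a) $\Char(\F_q)=5$, the module $\F_q^5/\langle(1,\dots,1)\rangle$ is no longer the right object / the construction degenerates and one checks no simple IAC algebra of this dimension survives; (b) $q\equiv\pm1\pmod5$, where the natural $4$-dimensional module for $C_5$ splits into $\F_q$-rational eigenspaces so $\AGL(1,5)$ acts irreducibly and uniquely, giving the one algebra whose presentation is displayed; (c) $q\equiv\pm2\pmod5$, where $C_5$ alone is already irreducible on the $4$-dimensional module over $\F_q$ (the eigenvalues generate a degree-$4$ extension) but $\AGL(1,5)$ is \emph{also} irreducible, and these two invariant algebra structures are genuinely non-isomorphic, giving exactly two.

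For the automorphism-group assertions at the end, I would compute $\Aut(L)$ directly for each algebra by the method already rehearsed in Proposition~\ref{prop:51}: an automorphism must preserve the product map $\psi$, hence the invariant $2$-dimensional subspace $U\le\wedge^2 V$, and one determines the full stabilizer in $\GL(4,\F_q)$ of all the structure data. For the algebra in (b) one shows the stabilizer is exactly the image of $\AGL(1,5)=C_5\rtimes C_4$; the inclusion $\AGL(1,5)\le\Aut(L)$ is by construction, and the reverse inclusion follows because any larger irreducible subgroup of $\GL(4,\F_q)$ fixing the same $2$-dimensional $U\subseteq\wedge^2 V$ would have to be on the short list of irreducible subgroups and a direct check (using that the associated algebra is rigid enough) rules out anything bigger. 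For the second algebra in (c), the $C_4$ part of $\AGL(1,5)$ fails to preserve the second invariant structure (this is precisely why the two algebras differ), leaving $\Aut(L)\cong C_5$.

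The main obstacle, as I see it, is the imprimitive case: one must rule out \emph{all} block systems for $\Aut(L)$ on the $4$-dimensional space without the convenience of knowing a priori that block systems come from algebra direct summands. The trick will be to show that any $\Aut(L)$-invariant subspace decomposition $L=V_1\oplus\cdots\oplus V_\ell$ forces, via the anti-commutative product and the $\Aut(L)$-equivariance of $\psi$, an actual ideal decomposition — essentially because $\prd{V_i}{V_j}$ is an $\Aut(L)$-invariant subspace and the block permutation action constrains it, so each $V_i\oplus\prd{V_i}{V_i}$-type combination must itself be a subalgebra or the sum must be forced into a single block — and then invoke simplicity and Theorem~\ref{SimpleSum} to get a contradiction. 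Managing this for $\ell=2$ and $\ell=4$ over a field where composition factors may be nontrivial Galois twists is where the real work lies; the absolutely irreducible case, by contrast, is a relatively clean finite computation with the short list of irreducible subgroups of $\GL(4,\F_q)$ and the $\wedge^2$-decomposition.
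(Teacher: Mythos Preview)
Your overall framework (translating to irreducible ESQ-subgroups of $\GL(4,\F_q)$, identifying the role of $\AGL(1,5)$ and $C_5$, and reading the case split from the order of $q$ modulo $5$) is correct and matches the paper's route, which is explicitly a sketch relying on \cite{ucs}*{Theorems~16 and~17}. However, there is a genuine gap in your plan.

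Your ``main obstacle'' is backwards. You propose to show that a nontrivial block system for $\Aut(L)$ on the $4$-dimensional space forces a proper ideal, hence contradicts simplicity, and thereby to reduce to primitive subgroups. But the actual answer is \emph{imprimitive}: when $q\equiv 1\pmod 5$ the group $\AGL(1,5)$ acts with four one-dimensional blocks (the eigenspaces of $C_5$, permuted by $C_4$), and when $q\equiv 4\pmod 5$ it acts with two two-dimensional blocks; yet the associated algebra in part~(b) is simple. So simplicity of $L$ does \emph{not} imply primitivity of $\Aut(L)$, and any argument that rules out block systems would eliminate exactly the case you need to keep. The paper avoids this entirely by invoking \cite{ucs}*{Theorem~16}, which classifies irreducible ESQ-subgroups of $\GL(4,q)$ directly: they are precisely the groups $H$ with $C_5\le H\le\AGL(1,5)$, with no separate primitive/imprimitive analysis required.

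A second gap concerns the count in part~(c). Knowing that $\Aut(L)\in\{C_5,\AGL(1,5)\}$ does not by itself give exactly two isomorphism classes; a priori there could be several non-isomorphic algebras all with $\Aut(L)\cong C_5$. The paper's argument writes $\wedge^2 V=W\oplus C$ with $W\cong V$ and $C$ the $2$-dimensional fixed space of $C_5$, parametrizes the relevant epimorphisms $\wedge^2 V\to V$ by certain subspaces $N_w$ indexed by $w\in\F_{q^4}^\times$ (identifying $V\cong W\cong\F_{q^4}$), and then counts orbits under $\F_{q^4}^\times\rtimes\mathrm{Gal}(\F_{q^4}{:}\F_q)$: the five cosets of $(\F_{q^4}^\times)^5$ collapse to two orbits, one fixed by Galois (giving the $\AGL(1,5)$ algebra) and four fused (giving the single $C_5$ algebra). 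Your plan does not yet contain this orbit-counting step, and without it the ``exactly two'' conclusion is unsupported.
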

\begin{proof}
  Since the proof follows the ideas in~\cite{ucs}*{Theorem~17}, we
  only give a sketch; the details can be filled in by the
  reader consulting~\cite{ucs}*{Theorem~17}.

  (a) It
follows from~\cite{ucs}*{Theorem~16} that no irreducible
ESQ subgroup of
  $\GL(4,q)$ exists if $5\mid q$, and hence over finite fields
  of characteristic 5, there are no 4-dimensional simple IAC algebras.
  This proves~(a).

  Suppose $q=p^k$ where $p\ne5$ is a prime and $k\ge1$. By quadratic reciprocity,
  $5\in(\F_q^\times)^2$ if and only if $q\equiv\pm1\pmod 5$. To see this
  note first that $q\equiv\pm2\pmod 5$ implies $p\equiv\pm2\pmod 5$
  and $k$ is odd, so $5\not\in(\F_p^\times)^2$ and $5\not\in(\F_q^\times)^2$.
  On the other hand, if $5\in(\F_q^\times)^2$, then either $k$ is even and
  $5\in(\F_{p^2}^\times)^2\le(\F_q^\times)^2$, or $k$ is odd and
  $p\equiv\pm1\pmod 5$ and so $5\in(\F_p^\times)^2\le(\F_q^\times)^2$.
  
  (b)~Let $L$ be a $4$-dimensional simple IAC algebra
  over $\F_q$ and set $A=\Aut(L)$. By the discussion preceding this theorem,
  $A$ is an irreducible ESQ-subgroup of $\GL(4,q)$.  We can argue, as in
  the proof of~\cite{ucs}*{Theorem~17}, that $A\leq\AGL(1,5)$ contains
  $C_5$ and that if $A=\AGL(1,5)$, then $L$ is isomorphic to the algebra
  presented in~(b). By~\cite{ucs}*{Theorem~16},
   $5\in(\F_q^\times)^2$ if and only if
  no proper subgroup of $\AGL(1,5)$ is irreducible. Hence in this case
  the algebra $L$ presented in statement~(b) is, up to isomorphism,
  the unique simple IAC algebra of dimension 4 over $\F_q$, and
  $\Aut(L)=\AGL(1,5)$.

  (c)~Assume now that $q\equiv\pm 2\pmod 5$. One possibility
  by~\cite{ucs}*{Theorem~17} is that a simple 4-dimensional IAC
  algebra~$L$ is given by the presentation~(b), and $\Aut(L)=\AGL(1,5)$.
  Suppose now that $A\neq \AGL(1,5)$.
  The cyclic subgroup $C_5$ of $\AGL(1,5)$ is contained in $A$, and it
  is irreducible since $q\equiv\pm 2\pmod 5$. Let $V$ be a 4-dimensional vector space over $\F_q$.
  Let $a\in \GL(4,q)$ be an element of order $5$, which is unique up to
  conjugacy. Then the $\left<a\right>$-module $\wedge^2V$ can be written as
  $\wedge^2V=W\oplus C$ where $W\cong V$ and $C$ is the 2-dimensional
  fixed-point space of $a$.
  An IAC algebra structure on  $V$ is determined by an
  epimorphism $\varphi\colon \wedge^2V\rightarrow V$.
  As in the proof of~\cite{ucs}*{Theorem~17},
  there is a bijection between the
  set of such linear epimorphisms and the set of subspaces $N$
  of $V\oplus\wedge^2V$ such that $\dim N=4$, $N\cap V=0$, and
  $N\cap \wedge^2V=C$. If fact, identifying $V$ and $W$ with the field
  $\F_{q^4}$, we may define, for each $w\in W\setminus\{0\}$
  such a subspace $N_w$ exactly as in~\cite{ucs}*{Theorem~17}. Now using
  the argument  in~\cite{ucs}*{Theorem~17}, the multiplicative group
  of $\F_{q^4}$ acts on the set of subspaces $N_w$, inducing isomorphisms between
  the associated IAC algebras, with 5 orbits corresponding
  to the cosets of the multiplicative subgroup $(\F_{q^4}^\times)^5$. Then
  $\mbox{Gal}(\F_{q^4}:\F_q)\cong C_4$ also acts on the set of these subspaces $N_w$,
  also inducing isomorphism between the associated algebras, fusing
  the 4 orbits that correspond to non-trivial cosets and leaving the fifth
  orbit invariant. Therefore
  there are two $\F_{q^4}^\times\rtimes\mbox{Gal}(\F_{q^4}:\F_q)$-orbits
  on the set of subspaces $N_w$, and
  there are two isomorphism types of simple $4$-dimensional IAC  algebras.
\end{proof}

The IAC algebras given in Theorem~\ref{th:52}(b,c) are
members of an infinite family: take $t=5$ in the following theorem and note
that $\mbox{\rm A$\Gamma$L}(1,5)=\mbox{\rm AGL}(1,5)$.

\begin{theorem}
  If $t$ and $q$ are powers of distinct primes such that $t>3$, then
  there exists a $(t-1)$-dimensional IAC algebra over $\F_q$ whose automorphism
  group contains $\mbox{\rm A$\Gamma$L}(1,t)$ acting in its irreducible representation 
of dimension
  $t-1$.
\end{theorem}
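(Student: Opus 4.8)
The plan is to realise the algebra on a suitable $\F_q$-form of the deleted permutation module of $G:=\mbox{\rm A$\Gamma$L}(1,t)$ acting on $\Omega:=\F_t$, and to exhibit that module as an ESQ-module in the sense of Definition~\ref{def:esq}. Write $V=\F_q\Omega$ for the permutation module, with natural basis $e_x$ ($x\in\Omega$). Since $t=p^a$ and $q$ are powers of distinct primes, $q$ is invertible modulo $t$, so $V=M\oplus\F_q\mathbf1$, where $\mathbf1=\sum_xe_x$ and $M=\{\sum a_xe_x\mid\sum a_x=0\}$ is the $(t-1)$-dimensional deleted permutation module. The restriction of $M$ to the regular normal subgroup $\F_t^+\le G$ is the sum of the $t-1$ nontrivial $\overline{\F_q}$-characters of $\F_t^+$, each with multiplicity one, and $\F_t^\times$ permutes these transitively; hence $M$, and more generally any twist $M_\delta:=M\otimes\delta$ of $M$ by a linear character $\delta$ of $G$ trivial on $\mbox{\rm AGL}(1,t)$ --- equivalently, a character of $G/\mbox{\rm AGL}(1,t)\cong\mbox{Gal}(\F_t:\F_p)$ --- is an absolutely irreducible $(t-1)$-dimensional $\F_q G$-module. (When $t$ is prime there is no choice and $M_\delta=M$.) By the discussion preceding Theorem~\ref{th:52} it suffices to find one such $\delta$ with $\textup{Hom}_{\F_q G}(\wedge^2M_\delta,M_\delta)\neq0$: since $M_\delta$ is irreducible such a homomorphism is automatically onto, so $M_\delta$ becomes an ESQ-module carrying a non-abelian anti-commutative product for which $G\leq\Aut(M_\delta)$, and then $M_\delta$ with this product is IAC because $G$ acts irreducibly on it.

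The key structural fact is that the stabiliser in $G$ of an ordered pair of distinct points of $\Omega$ is conjugate to $\Gamma:=\mbox{Gal}(\F_t:\F_p)$ (the Frobenius $x\mapsto x^p$ fixes $0$ and $1$), so the stabiliser of an unordered pair is conjugate to $H:=\Gamma\langle\sigma\rangle\cong C_a\times C_2$, where $\sigma$ is the unique element of $\mbox{\rm AGL}(1,t)$ interchanging $0$ and $1$ (that is, $z\mapsto1-z$ when $p$ is odd). Therefore $\wedge^2V\cong\textup{Ind}_H^G(\lambda)$ as $\F_q G$-modules, where $\lambda$ is the linear character of $H$ with $\lambda|_\Gamma=1$ and $\lambda(\sigma)=-1$ (this is the statement for $\Char\F_q\neq2$; when $\Char\F_q=2$, which forces $t$ to be an odd prime power, one uses the alternating square and $\lambda=\mathbf1$). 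Frobenius reciprocity now gives $\textup{Hom}_{\F_q G}(\wedge^2V,U)\cong\textup{Hom}_{\F_q H}(\lambda,U|_H)$ for every $\F_q G$-module $U$, reducing everything to the multiplicity of a one-dimensional character of $H\cong C_a\times C_2$ in the $H$-permutation module $V|_H$ on $\Omega$ --- that is, to counting fixed points of elements of $H$ on $\Omega$.

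Carrying this out (assume first that $\Char\F_q$ is coprime to $2a$, so that $V|_H$ splits into one-dimensional isotypic pieces), and using $V=M\oplus\F_q\mathbf1$ together with $\wedge^2V\cong\wedge^2M\oplus M$, one obtains
\[
\sum_{\delta}\dim_{\F_q}\textup{Hom}_{\F_q G}(\wedge^2M_\delta,M_\delta)=\lfloor t/2\rfloor-1,
\]
where $\delta$ runs over the characters of $\Gamma$; here $\lfloor t/2\rfloor$ is the number of transpositions in the cycle type of $\sigma$ on $\Omega$, and the $-1$ records that the copy $M\wedge\mathbf1$ of $M$ inside $\wedge^2V$ contributes only the abelian product. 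Every summand is non-negative: for $\delta\neq1$ this is clear, while the $\delta=1$ summand equals $\tfrac12(N-f)-1$, with $N$ the number of Frobenius-orbits on $\F_t$ and $f$ the number of these fixed by $\alpha\mapsto1-\alpha$, which is $\geq0$ since the distinct Frobenius-orbits $\{0\}$ and $\{1\}$ are not fixed by $\alpha\mapsto1-\alpha$, so $N-f\geq2$. As $\lfloor t/2\rfloor-1\geq1$ for every $t>3$, some summand is positive, so $\textup{Hom}_{\F_q G}(\wedge^2M_\delta,M_\delta)\neq0$ for a suitable $\delta$, and $M_\delta$ equipped with the corresponding product is the required $(t-1)$-dimensional IAC algebra over $\F_q$ with $\mbox{\rm A$\Gamma$L}(1,t)\leq\Aut$. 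For $t$ prime --- in particular $t=5$, which recovers the algebras of Theorem~\ref{th:52}(b,c) --- this simplifies drastically: $\Gamma=1$, no twist is available, $H\cong C_2$, $N=t$, $f=1$, and the only inequality needed is $(t-1)/2\geq2$.

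The delicate points, I expect, are the two bookkeeping corrections and the bad characteristics. The $-1$ above must be tracked carefully: it is exactly the failure of the product on $V$ to restrict to a nonzero product on $M$, the abelian direction. More seriously, a nonzero Galois twist $\delta$ is genuinely necessary for some $t$: already when $t=4$, where $\mbox{\rm A$\Gamma$L}(1,4)\cong\Sym_4$, the deleted permutation module satisfies $\wedge^2M\cong M\otimes\mathrm{sgn}\not\cong M$, so $M$ itself is not an ESQ-module while $M\otimes\mathrm{sgn}$ is; identifying which $\delta$ works in general is where the real work lies. Finally, when $\Char\F_q=2$ or $\Char\F_q$ divides $|\mbox{Gal}(\F_t:\F_p)|$ the module $V|_H$ need not decompose into one-dimensional eigenspaces, so the count must be redone --- most cleanly by reducing the corresponding characteristic-$0$ computation modulo $\Char\F_q$, which is legitimate because the integral deleted permutation module stays irreducible upon reduction, as $M$ is absolutely irreducible over $\F_q$.
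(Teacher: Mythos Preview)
The paper's proof is a two-sentence citation of \cite{ucs}*{Theorem~18}, so your approach---constructing the algebra directly on a Galois twist of the deleted permutation module and verifying the ESQ property via Frobenius reciprocity---is genuinely different and substantially more explicit. Your identification $\wedge^2 V\cong\textup{Ind}_H^G(\lambda)$ and the resulting count $\sum_\delta\dim\textup{Hom}_{G}(\wedge^2 M_\delta,M_\delta)=\lfloor t/2\rfloor-1$ are correct, and your observation that the untwisted module $M$ already fails to be ESQ when $t=4$ (where $\wedge^2M\cong M\otimes\mathrm{sgn}$) is both correct and a useful caveat that the paper's terse ``the $(t-1)$-dimensional module'' glosses over.

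There is, however, a genuine gap beyond the bad-characteristic bookkeeping you flag. Your sum ranges over \emph{all} linear characters $\delta$ of $\Gamma\cong C_a$, but only $\gcd(a,q-1)$ of these take values in $\F_q^\times$. The twist $M_\delta$ is a $(t-1)$-dimensional $\F_q G$-module only for such $\F_q$-rational $\delta$; if $\delta$ is merely defined over $\F_{q^d}$ with $d>1$, then $M_\delta$ lives over $\F_{q^d}$, and restricting scalars produces an $\F_q$-algebra of dimension $d(t-1)$ rather than $t-1$. Thus from $\lfloor t/2\rfloor-1\ge1$ you conclude only that some $\overline{\F_q}$-valued $\delta$ has positive contribution, not that an $\F_q$-valued one does---and the theorem, demanding a $(t-1)$-dimensional algebra over $\F_q$, requires the latter. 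Equivalently, you must show that the $\Gamma$-module $\textup{Hom}_{\F_q N}(\wedge^2M,M)$ (with $N=\AGL(1,t)$) always contains a one-dimensional $\F_q[\Gamma]$-summand. Your formula $m_1=\tfrac12(N-f)-1$ for the trivial twist settles this whenever it is positive, but you yourself show it vanishes for $t=4$, and nothing in your argument excludes its vanishing simultaneously with all other $\F_q$-rational contributions for some larger $t$ and an unlucky $q$. This rationality question is where the proof is genuinely incomplete.
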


\begin{proof}
  By~\cite[Theorem~18]{ucs} the $(t-1)$-dimensional module for
  $\mbox{\rm A$\Gamma$L}(1,t)$ over $\F_q$ is irreducible and ESQ.
  The result now follows from the preamble to Theorem~\ref{th:52}.
\end{proof}

\section{\texorpdfstring{$r$}{}-dimensional simple IAC algebras}\label{sec:rdim}

In this section we construct an infinite family of finite-dimensional
simple IAC algebras. We will work under the following hypothesis.

\begin{hypothesis}\label{hypothesis}
  Fix an integer $b\geq 2$. Let $n>1$ be a divisor of $b^2+b-1$.
  Since $b(b+1)\equiv 1\pmod n$, $b$ is invertible modulo $n$, and
  let~$r$
be the multiplicative order of $b$ modulo $n$. Suppose that $r>1$ and
let $q$ be a prime power such that $n\mid(q-1)$. Let $V=(\F_q)^r$
have basis $e_0,e_1,\dots,e_{r-1}$.
\end{hypothesis}

There are infinitely many choices for a prime-power~$q$
with $q\equiv1\pmod n$ by Dirichlet's Theorem on primes
in an arithmetic progression.

Let $\zeta$ be an $n$-th root of unity in $\F_q$ and let $G$ be the subgroup of $\GL(r,\F)$ generated by the matrices
\[
A=\begin{bmatrix}
0 & 1 & & 0 \\
& & \ddots & \\
0 & 0 & & 1 \\
1 & 0 & \dots & 0\end{bmatrix}
\quad\mbox{and}\quad
B=\begin{bmatrix}
\zeta & 0 & \dots & 0\\
0 & \zeta^b & & 0\\
& & \ddots & \\
0 & 0 & \dots & \zeta^{b^{r-1}}\end{bmatrix}.
\]
Direct calculation shows that $A^r=B^n=1$ and $BA=AB^{b^{-1}}$ where $b^{-1}$
denotes the inverse of $b$ modulo~$n$.
Let $V=(\F_q)^r$, and consider $V$ and its
exterior square $\wedge^2 V$ as $G$-modules.
Since $B$ is a diagonal matrix with pairwise distinct eigenvalues,
and $A$ cyclically permutes the $1$-dimensional eigenspaces of $B$,
we obtain that $G$ acts irreducibly on $V$.

\begin{proposition}\label{prop:galalg}
Assume Hypothesis~\textup{\ref{hypothesis}} and
let
\begin{eqnarray*}
U_1&=&\left\langle e_i\wedge e_j \mid j-i\equiv 1 \bmod r\right \rangle=\left\langle e_0\wedge e_1, e_1\wedge e_2, \dots, e_{r-2}\wedge e_{r-1}, e_{r-1}\wedge e
_0\right \rangle;\\
U_2&=&\left\langle e_i\wedge e_j \mid j-i\not \equiv 1 \bmod r\right\rangle.
\end{eqnarray*}
Then the following are valid:
\begin{enumerate}[{\rm (a)}]
	\item $\wedge^2 V=U_1\oplus U_2$ is a $G$-invariant decomposition;
	\item the map $\psi\colon  V\rightarrow U_1$ defined by
          $e_i\psi=e_{i+1}\wedge e_{i+2}$ reading subscripts modulo~$r$ is a $G$-module isomorphism; and
	\item  $V$ is an ESQ $G$-module.
\end{enumerate}
\end{proposition}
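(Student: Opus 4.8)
The plan is to establish all three parts by direct computation in the basis $e_0,\dots,e_{r-1}$ and its induced basis $\{e_i\wedge e_j\}$ of $\wedge^2 V$, the only non-routine ingredient being the congruence $b(b+1)\equiv1\pmod n$ from Hypothesis~\ref{hypothesis}. It is convenient to first note that $r\geq3$: if $r=2$ then $b^2\equiv1\pmod n$, and combined with $b^2+b-1\equiv0\pmod n$ this forces $b\equiv0\pmod n$, contradicting that $b$ is a unit modulo $n$. This ensures $\dim U_1=r$ and that the map $\psi$ in part~(b) is injective.

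For part~(a), I would record how the generators act on $\wedge^2 V$. Since $e_iB=\zeta^{b^i}e_i$, the matrix $B$ sends $e_i\wedge e_j\mapsto\zeta^{b^i+b^j}\,e_i\wedge e_j$, so $B$ is diagonal in the basis $\{e_i\wedge e_j\}$ and hence preserves the span of any subset of it. Since $e_iA=e_{i+1}$ with subscripts read modulo $r$, the matrix $A$ sends $e_i\wedge e_j\mapsto e_{i+1}\wedge e_{j+1}$; thus $A$ permutes the basis $\{e_i\wedge e_j\}$ up to sign, the underlying permutation of $2$-element index sets being translation by $1$ modulo $r$, under which consecutive pairs $\{i,i+1\}$ are carried to consecutive pairs. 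Consequently both $A$ and $B$ preserve $U_1$, the span of the $r$ consecutive wedges $e_i\wedge e_{i+1}$, and $U_2$, the span of the remaining $\binom r2-r$ basis wedges. As $U_1$ and $U_2$ are spanned by complementary subsets of the basis of $\wedge^2 V$, we obtain $\wedge^2 V=U_1\oplus U_2$, a $G$-invariant decomposition.

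For part~(b), note that $\psi$ carries $e_0,\dots,e_{r-1}$ bijectively onto the spanning set $\{e_1\wedge e_2,\,e_2\wedge e_3,\,\dots,\,e_{r-2}\wedge e_{r-1},\,e_{r-1}\wedge e_0,\,e_0\wedge e_1\}$ of $U_1$, so $\psi$ is a linear isomorphism; it remains to check $G$-equivariance on the $e_i$. For $A$ this is immediate: $(e_iA)\psi=e_{i+1}\psi=e_{i+2}\wedge e_{i+3}=(e_{i+1}\wedge e_{i+2})A=(e_i\psi)A$. For $B$, on one hand $(e_iB)\psi=\zeta^{b^i}\,e_{i+1}\wedge e_{i+2}$, and on the other $(e_i\psi)B=(e_{i+1}\wedge e_{i+2})B=\zeta^{b^{i+1}+b^{i+2}}\,e_{i+1}\wedge e_{i+2}$; these coincide because $b^{i+1}+b^{i+2}=b^i\cdot b(b+1)\equiv b^i\pmod n$ and $\zeta^n=1$. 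Hence $\psi$ is a $G$-module isomorphism.

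Part~(c) is then immediate: taking $U=U_2$, parts~(a) and~(b) give $\wedge^2 V/U_2\cong U_1\cong V$ as $G$-modules, so $V$ is an ESQ module in the sense of Definition~\ref{def:esq}. I do not expect a genuine obstacle here; the only thing needing care is the bookkeeping of signs and of subscripts modulo $r$ in the exterior products, and verifying that the exponent identity invoked for $B$-equivariance is precisely the hypothesis $b(b+1)\equiv1\pmod n$.
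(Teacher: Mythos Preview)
Your proof is correct and follows essentially the same route as the paper's own proof: verify that $A$ permutes and $B$ scales the wedge basis vectors so that $U_1,U_2$ are preserved, check $\psi$ commutes with $A$ and $B$ using $b(b+1)\equiv1\pmod n$, and deduce ESQ. Your preliminary observation that $r\ge3$ is a useful addition the paper omits; it is indeed needed for $\dim U_1=r$ and for $\psi$ to be injective, and your argument ruling out $r=2$ is sound.
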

\begin{proof}
  (a) Reduce the indices $i$ modulo~$r$ if $i\ge r$. Then
  $\{e_i\wedge e_{i+1}\mid 0\le i\le r-1\}$ is a basis element of $U_1$. Since
  $(e_i\wedge e_{i+1})A=e_{i+1}\wedge e_{i+2}$, we see that $U_1A=U_1$. A
  similar argument shows that $U_2A=U_2$.
  Since $B$ is a diagonal matrix, we see that
  $\langle e_i\wedge e_j\rangle B=\langle e_i\wedge e_j\rangle$ for all
  $i< j$, and hence $U_1B=U_1$ and $U_2B=U_2$. Thus $\wedge^2 V=U_1\oplus U_2$
  is a $G$-invariant decomposition, as claimed.
  
(b) Note first that $b^2+b\equiv 1\mod n$ implies $\zeta^{b^2+b}=\zeta$ and $\zeta^{b^{i+2}+b^{i+1}}=\zeta^{b^i}$. Let us verify
that $\psi$ commutes with $A$ and with $B$:
\begin{align*} 
	e_iA\psi&=e_{i+1}\psi=e_{i+2}\wedge e_{i+3}=(e_{i+1}\wedge e_{i+2})A=e_i\psi A;\\
	e_iB\psi&=\zeta^{b^i}e_i\psi=\zeta^{b^i}(e_{i+1}\wedge e_{i+2})=\zeta^{b^{i+1}+b^{i+2}}(e_{i+1}\wedge e_{i+2})  \\&=(e_{i+1}\wedge e_{i+2})B=e_i\psi B.  
\end{align*}
Since $\psi$ is surjective and $\dim(V)=\dim(V\psi)=\dim(U_1)$, we obtain that $\psi$ is a bijection.  Thus $\psi$ is an
isomorphism of $G$-modules.

(c) This follows from~(b) and the above fact that $V$ is an irreducible $G$-module.
\end{proof}

Using the ESQ $G$-module $V$ constructed before Proposition~\ref{prop:galalg},
we can construct an
IAC algebra $L$ that corresponds to $V$.

\begin{corollary} 
  Assume Hypothesis~\textup{\ref{hypothesis}}. Let $L$ be the vector space $V=(\F_q)^r$ with an anti-commutative product defined
  on the basis $\{e_0,e_1,\dots,e_{r-1}\}$ by
  \begin{align*}
    \prd{e_0}{e_1}&=e_{r-1},\\
    \prd{e_{i-1}}{e_{i}}&=e_{i-2}\quad\mbox{for } i=2, 3,\ldots,r-1,\\
    \prd{e_{r-1}}{e_0}&=e_{r-2},\\
  \prd{e_i}{e_j}&=0\quad\mbox{\ if $j-i\neq \pm 1 \pmod r$.}
\end{align*}
Then $L$ is an $r$-dimensional simple IAC algebra.
\end{corollary}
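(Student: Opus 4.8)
The plan is to realise $L$ as the IAC algebra attached to the explicit irreducible ESQ module $V$ of Proposition~\ref{prop:galalg}, and then to prove simplicity by a short ideal-chasing argument that exploits the sparseness of the multiplication table. First I would produce the IAC structure. By Proposition~\ref{prop:galalg} we have a $G$-invariant decomposition $\wedge^2 V = U_1\oplus U_2$ together with a $G$-module isomorphism $\psi\colon V\to U_1$, $e_i\mapsto e_{i+1}\wedge e_{i+2}$ (indices read modulo $r$). Composing the projection $\wedge^2 V\to U_1$ along $U_2$ with $\psi^{-1}$ gives a $G$-module epimorphism $\pi\colon\wedge^2 V\to V$ with kernel $U_2$, so $V$ is an irreducible ESQ $\F_q G$-module. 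By the discussion following Definition~\ref{def:esq}, the product $\prd uv=(u\wedge v)\pi$ makes $V$ into an anti-commutative algebra whose automorphism group contains $G$; since $G$ acts irreducibly on $V$ (as observed just before Proposition~\ref{prop:galalg}) this algebra is IAC. It then remains to check that this product is the one in the statement: for a cyclically consecutive pair we have $e_i\wedge e_{i+1}=\psi(e_{i-1})$, so $\prd{e_i}{e_{i+1}}=e_{i-1}$, which unpacks (take $i=0$, then $1\le i\le r-2$, then $i=r-1$) to the three displayed families of relations, while every remaining basis element $e_i\wedge e_j$ lies in $U_2$ and hence $\prd{e_i}{e_j}=0$. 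This identifies $L$ with the algebra of the corollary, and $\dim_{\F_q}L=\dim_{\F_q}V=r$.

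Next I would prove $L$ is simple. From the relations the only nonzero products among basis vectors are $\prd{e_k}{e_{k+1}}=e_{k-1}$ and their negatives, so for a fixed index $j$ the products $\prd{e_i}{e_j}$ vanish except $\prd{e_{j-1}}{e_j}=e_{j-2}$ and $\prd{e_{j+1}}{e_j}=-e_{j-1}$; hence for every $x=\sum_i x_ie_i$ one has $\prd x{e_j}=x_{j-1}e_{j-2}-x_{j+1}e_{j-1}$. Let $I$ be a nonzero ideal of $L$, choose $x\in I$ with some coefficient $x_m\ne 0$, and bracket: $\prd x{e_{m+1}}=x_me_{m-1}-x_{m+2}e_m\in I$, and then $\prd{(x_me_{m-1}-x_{m+2}e_m)}{e_m}=x_me_{m-2}\in I$, so $e_{m-2}\in I$ because $x_m$ is a unit. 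Since $\prd{e_k}{e_{k+1}}=e_{k-1}$, whenever one basis vector lies in $I$ so does the next one with index decreased by $1$, and iterating runs through all residues modulo $r$; thus $e_0,\dots,e_{r-1}\in I$ and $I=L$. Hence $L$ has no proper nonzero ideal; as $\prd LL=L$ (the elements $\prd{e_k}{e_{k+1}}=e_{k-1}$ already span $L$) the algebra is non-abelian, and therefore simple.

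The IAC part is essentially formal, handed to us by Proposition~\ref{prop:galalg} and the general correspondence between irreducible ESQ modules and IAC algebras recorded after Definition~\ref{def:esq}. The genuine content is the bookkeeping identifying the $\pi$-product with the tabulated one --- the only mild subtlety being the cyclic wrap-around basis element $e_{r-1}\wedge e_0=\psi(e_{r-2})$, which forces $\prd{e_{r-1}}{e_0}=e_{r-2}$ --- together with the short ideal-chasing argument for simplicity. I do not expect a real obstacle at either step. As an alternative one could instead invoke Theorem~\ref{SimpleSum} and rule out a nontrivial permutation of the minimal ideals by $G$, but the direct computation above is cleaner and self-contained.
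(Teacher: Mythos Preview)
Your proof is correct and follows essentially the same route as the paper. The paper's argument is the $m=0$ special case of your computation: it shows $\prd{\prd{v}{e_1}}{e_0}=v_0e_{r-2}$ and then propagates as you do; your extra paragraph identifying the $\pi$-product with the tabulated one is more explicit than the paper, which simply asserts that the IAC property follows from Proposition~\ref{prop:galalg}.
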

\begin{proof}
It follows from Proposition~\ref{prop:galalg}
that $L$ is IAC. Let us show that $L$
is simple. Suppose that $I\lhdeq L$ and let $v\in I\setminus\{0\}$.
Write $v=v_0 e_0+\cdots+v_{r-1}e_{r-1}$ with $v_i\in\F_q$.
Suppose without loss of generality that $v_0\neq 0$. Then using
the multiplication table of $L$, we obtain that
\[
  \prd{\prd{v}{e_1}}{e_0}=\prd{v_0e_{r-1}-v_2e_0}{e_0}=v_0e_{r-2}.
\]
Hence $e_{r-2}\in I$. Multiplying $e_{r-2}$ with the elements $e_{r-1},e_0,\ldots
e_{r-3}$, we obtain that $e_i\in I$ for all $i\in\{0,\ldots,r-1\}$. Thus
$I=L$, and so $L$ is simple as claimed.
\end{proof}

\section{The Clebsch-Gordan formula for the exterior square}\label{sec:CG}

The aim of
Section~\ref{Simple} is to present a class of ESQ modules for the group
$\SL(2,\F)$. In this section we prove a general version  of the
Clebsch--Gordan Formula for the exterior squares of the representations of
$\GL(2,\F)$ on spaces of homogeneous polynomials
that is valid over fields of characteristic $p$.
Such results are usually proved for $\SL(2,\F)$, but generalizing to $\GL(2,\F)$ does not require much more effort.

Let $\F$ be a field of characteristic different from 2 and let $\F[X,Y]$ be the algebra of polynomials over $\F$ in two indeterminates.
An  action of $\GL(2,\F)$ on $\F[X,Y]$ is given by
\begin{equation}\label{E:action}
  \left(\sum \lambda_{ij}X^iY^j\right)A
  =\sum \lambda_{ij}(a_{11}X+a_{12}Y)^i(a_{21}X+a_{22}Y)^j
\end{equation}
where $A=\begin{bmatrix} a_{11}&a_{12}\\a_{21}&a_{22}\end{bmatrix}$.
For each integer $m\ge0$, the subspace of
homogeneous polynomials of degree~$m$ is invariant under this action.
This space has dimension $m+1$ since $\{X^iY^{m-i}\mid 0\le i\le m\}$ is a
basis. When $\Char(\F)=0$ this space is denoted $V_m$, and it
is an irreducible $\F G$-module
for each $m\ge0$. However, if $\F=\F_q$
is a finite field, say with $\Char(\F_q)=p$, we use a different
definition when $p\le m< q$.
In this case, the homogeneous polynomials of degree~$p$ have 
a proper submodule, namely $\langle X^p,Y^p\rangle$. The irreducible
$\textup{SL}(2,\F_q)$-modules in characteristic~$p$ were
determined by Brauer and Nesbitt, and described
clearly in~\cite{BHRD}*{Theorem~5.2.3}. There are $q$ pairwise
non-isomorphic and absolutely irreducible $\F G$-modules which we denote
$V_0,V_1,\dots,V_{q-1}$. If $m<p$, then $V_m$ denotes the subspace of
homogeneous polynomials of degree~$m$ as above. If $p\le m<q$, the
definition of $V_m$ depends on the $p$-adic expansion
$m=m_0+m_1p+\cdots+m_kp^k$ of $m$ where $m_i\in\{0,1,\dots,p-1\}$ and
$k:=\lfloor \log_p m\rfloor$. We set
$V_m=V_{m_0}\otimes V_{m_1}^\phi\otimes\cdots\otimes V_{m_k}^{\phi^k}$ where
$V^\phi$ denotes the $\F G$-module obtained from $V$ by twisting by
the field automorphism $\phi\colon \F\to \F$ with
$\lambda\phi=\lambda^p$, see~\cite{HB}*{Definition~VII.1.13}.

We are interested in irreducible
ESQ-modules, i.e.\ irreducible modules $V$ which are a quotient of their
exterior square. The decomposition of $V_m\otimes V_n$ as a module over
$\textup{SL}(2,\mathbb{C})$ was described by Clebsch and Gordan,
see~\cite{Kowalski}*{Theorem~2.6.3}.
Our primary interest is in $\GL(2,\F)$-module decompositions
when $\F$ is finite. In the following theorem $V_m$ is the $\GL(2,\F)$-module
of homogeneous polynomials of degree~$m$ and
$V_{\det}$ is
the 1-dimensional $\GL(2,\F)$-module on which $\GL(2,\F)$ acts by the rule $vg=(\det g)v$
for all $v\in V_{\det}$ and $g\in \GL(2,\F)$.
For a $G$-module $V$, we let $S^2V$ denote the symmetric square of $V$.

\begin{theorem}\label{T:CG}
  Let $\F$ be a field of characteristic different from $2$ and let $G=\GL(2,\F)$.
  Then the following
decompositions hold where the summands are irreducible $\F G$-modules.
\begin{enumerate}[{\rm (a)}]
  \item If $m+n<\Char(\F)$ or $\Char(\F)=0$, then 
\[
  V_m\otimes V_n\cong\bigoplus_{i=0}^{\min(m,n)} V_{\det}^{\otimes i}\otimes V_{m+n-2i}.
\]
  \item If $0\le 2m<\Char(\F)$ or $\Char(\F)=0$, then
\[
  \wedge^2 V_m\cong
  \bigoplus_{\begin{smallmatrix} i=1\\ \textup{$i$ odd}\end{smallmatrix}}^m  V_{\det}^{\otimes i}\otimes V_{2m-2i}\quad\textup{and}\quad
  S^2 V_m\cong\bigoplus_{\begin{smallmatrix} i=0\\ \textup{$i$ even}\end{smallmatrix}}^m
  V_{\det}^{\otimes i}\otimes V_{2m-2i}.
\]
\end{enumerate}
\end{theorem}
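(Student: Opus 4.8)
The plan is to realise every module in sight inside polynomial algebras and to write down explicit $\GL(2,\F)$-equivariant maps — the \emph{transvectants} produced by the classical Cayley $\Omega$-process — whose direct sum has the asserted module as target; a dimension count then promotes surjectivity to an isomorphism, and the same argument establishes complete reducibility of $V_m\otimes V_n$ in the stated range for free. First I would recall that, under $m+n<\Char(\F)$ (or $\Char(\F)=0$), the module $V_m$ of homogeneous polynomials of degree $m$ is irreducible for $\SL(2,\F)$ — by \cite{BHRD}*{Theorem~5.2.3} when $\F$ is finite, and classically otherwise — hence for $\GL(2,\F)$; likewise each $V_{m+n-2i}$ (resp. $V_{2m-2i}$) is irreducible and, for distinct $i$, these modules are pairwise non-isomorphic already after restriction to $\SL(2,\F)$, on which the twist $V_{\det}^{\otimes i}$ is trivial. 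Then, for $0\le i\le\min(m,n)$, let $\Omega=\partial_{X_1}\partial_{Y_2}-\partial_{Y_1}\partial_{X_2}$ act on $\F[X_1,Y_1,X_2,Y_2]$, whose bidegree-$(m,n)$ component is identified with $V_m\otimes V_n$ via $f\otimes g\mapsto f(X_1,Y_1)g(X_2,Y_2)$, and define $\tau_i\colon V_m\otimes V_n\to V_{\det}^{\otimes i}\otimes V_{m+n-2i}$ by applying $\Omega^i$ and then specialising $X_1=X_2=X$, $Y_1=Y_2=Y$. The classical fact that $\Omega$ intertwines the diagonal $\GL(2,\F)$-action up to the scalar $\det$ shows that $\tau_i$ is $\GL(2,\F)$-equivariant with exactly the $V_{\det}^{\otimes i}$-twist, and since $\Omega^i$ involves only differentiation, $\tau_i$ makes sense over any field.

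Next I would verify that each $\tau_i$ is nonzero: a one-line computation gives $\tau_i(X^m\otimes Y^n)=\tfrac{m!}{(m-i)!}\cdot\tfrac{n!}{(n-i)!}\,X^{m-i}Y^{n-i}$, and each factorial ratio is a product of at most $m+n$ consecutive positive integers, hence nonzero in $\F$ by the characteristic hypothesis. Since the target of $\tau_i$ is irreducible, $\tau_i$ is therefore surjective. Because the $\min(m,n)+1$ targets are pairwise non-isomorphic, the combined map $\tau=\bigoplus_i\tau_i$ surjects onto $\bigoplus_{i=0}^{\min(m,n)}V_{\det}^{\otimes i}\otimes V_{m+n-2i}$ — a submodule of a multiplicity-free semisimple module that maps onto every simple summand is the whole module. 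Finally $\dim(V_m\otimes V_n)=(m+1)(n+1)=\sum_{i=0}^{\min(m,n)}\bigl(m+n-2i+1\bigr)$ equals the dimension of the target, so $\tau$ is an isomorphism; this proves part~(a), and incidentally that $V_m\otimes V_n$ is semisimple.

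For part~(b) I would use $V_m\otimes V_m=\wedge^2V_m\oplus S^2V_m$ (valid since $\Char(\F)\neq2$) and apply~(a) with $n=m$, which is legitimate because $2m<\Char(\F)$; it then remains to decide, for $0\le i\le m$, whether the simple summand $M_i:=V_{\det}^{\otimes i}\otimes V_{2m-2i}$ of $V_m\otimes V_m$ lies in $\wedge^2V_m$ or in $S^2V_m$. Swapping $(X_1,Y_1)\leftrightarrow(X_2,Y_2)$ turns $\Omega$ into $-\Omega$ while acting trivially after the specialisation to the diagonal, so $\tau_i(g\otimes f)=(-1)^i\tau_i(f\otimes g)$; hence $\tau_i$ vanishes on $S^2V_m$ when $i$ is odd and on $\wedge^2V_m$ when $i$ is even. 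On the other hand, Schur's lemma shows that $\tau_i$ kills every summand $M_j$ of $V_m\otimes V_m$ with $j\neq i$ and restricts to an isomorphism on $M_i$, so $M_i$ cannot lie in whichever of $\wedge^2V_m$, $S^2V_m$ the map $\tau_i$ annihilates. Since each of $\wedge^2V_m$ and $S^2V_m$ is the direct sum of a subfamily of the $M_i$, this forces $\wedge^2V_m\cong\bigoplus_{i\ \mathrm{odd}}M_i$ and $S^2V_m\cong\bigoplus_{i\ \mathrm{even}}M_i$, which is the content of~(b).

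The main obstacle is the nonvanishing of the transvectants $\tau_i$, together with the bookkeeping that keeps every factorial, binomial coefficient and structure constant invertible throughout under the hypothesis $m+n<\Char(\F)$; once $\tau_i\neq0$ is secured, irreducibility of the targets (Brauer--Nesbitt) and the dimension count make the rest formal. Two subsidiary points deserve care: the precise equivariance identity for $\Omega$ and the resulting $\det$-twist, and the elementary module-theoretic lemma — used twice — that a submodule of a multiplicity-free semisimple module is determined by which simple summands it surjects onto.
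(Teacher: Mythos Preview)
Your argument is correct and takes a genuinely different route from the paper's. The paper proves~(a) by induction on $\min(m,n)$: it realises $V_m\otimes V_n$ as the bidegree-$(m,n)$ piece $V_{m,n}\subset\F[X_1,Y_1,X_2,Y_2]$ and exhibits an internal direct sum $V_{m,n}=\im\delta\oplus W$, where $\delta$ is multiplication by $r=X_1Y_2-Y_1X_2$ (so $\im\delta\cong V_{\det}\otimes V_{m-1,n-1}$) and $W=(X_1^mX_2^n)\F G\cong V_{m+n}$ via the diagonal evaluation $\pi$; the inductive hypothesis then decomposes $V_{m-1,n-1}$. For~(b) the paper checks by hand that $\delta$ carries $S^2V_{m-1}$ into $\wedge^2V_m$ and $\wedge^2V_{m-1}$ into $S^2V_m$, giving the recursions $\wedge^2V_m\cong V_{\det}\otimes S^2V_{m-1}$ and $S^2V_m\cong V_{2m}\oplus(V_{\det}\otimes\wedge^2V_{m-1})$. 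Your approach instead writes down all the projections $\tau_i$ at once via the Cayley $\Omega$-process and avoids induction entirely; note that your $\Omega$ is essentially adjoint to the paper's multiplication-by-$r$, and your diagonal specialisation is the paper's $\pi$, so one may view $\tau_i$ as $\pi\circ\Omega^i$ where the paper iterates $\delta$ instead. The paper's recursion trades your explicit nonvanishing computation $\tau_i(X^m\otimes Y^n)\neq0$ for a one-step splitting argument, and it handles the parity in~(b) by a polynomial identity rather than your symmetry observation $\tau_i(g\otimes f)=(-1)^i\tau_i(f\otimes g)$; both proofs need the characteristic hypothesis at exactly the same place, namely to guarantee irreducibility of each $V_{m+n-2i}$.
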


\begin{proof}
(a)~Assume, without loss of generality, that $m\le n$.
We prove the result by induction on $m$. The case when $m=0$ is
clearly true. We view $\F[X_1,Y_1,X_2,Y_2]$ as an $\F G$-module,
by adapting the action~\eqref{E:action} separately  for
the indeterminates $X_1,\ Y_1$ and for $X_2,\ Y_2$.
Let $V_m(X_1,Y_1)$ be the submodule of $\F[X_1,Y_1]$
comprising the degree~$m$ homogeneous polynomials, and define
$V_n(X_2,Y_2)\le \F[X_2,Y_2]$ similarly. The product set
\[
  V_{m,n}:=V_m(X_1,Y_1)V_n(X_2,Y_2)\]
  is an $\F G$-module.
Further, $V_m(X_1,Y_1)\cong V_m$ and $V_n(X_2,Y_2)\cong V_n$.
The multiplication map
$\psi\colon V_m(X_1,Y_1)\otimes V_n(X_2,Y_2)\to V_{m,n}$
defined by
\[
  (f(X_1,Y_1)\otimes g(X_2,Y_2))\psi= f(X_1,Y_1)g(X_2,Y_2)
\]
is a well-defined surjective $\F G$-module homomorphism as an
easy calculation shows that
\[
  ((f\otimes g)A)\psi=(f\otimes g)\psi A
  \qquad\textup{for all $A\in\GL(2,\F)$.}
\]
Comparing dimensions shows that $\psi$ is an $\F G$-isomorphism, and
$V_m\otimes V_n\cong V_{m,n}$.

We now show $V_{m,n}$ and $V_{m-1,n-1}\oplus V_{m+n}$ are isomorphic
$\F G$-modules. Towards this end we will define
an epimorphism $\pi$ and a monomorphism~$\delta$.

Every element of $V_{m,n}$ has the form
$\sum_{i=0}^m\sum_{j=0}^n\lambda_{ij}X_1^iY_1^{m-i}X_2^jY_2^{n-j}$ where
$\lambda_{ij}\in \F$.
The evaluation homomorphism $X_2\mapsto X_1$,  $Y_2\mapsto Y_1$ gives rise to the
following map
\begin{equation}\label{E:psi}
\pi\colon V_{m,n}\rightarrow V_{m+n}(X_1, Y_1)\colon X_1^iY_1^{m-i}X_2^jY_2^{n-j}\mapsto X_1^{i+j}Y_1^{m+n-i-j}.
\end{equation}
It is straightforward to see that $\pi$ is an $\F G$-epimorphism.
We now show that the $\F G$-submodule $W:=(X_1^mX_2^n) \F G$ of $V_{m,n}$ generated
by $X_1^mX_2^n$ is isomorphic to $V_{m+n}$. If $m+n<\Char(\F)$ or $\Char(\F)=0$,
then $V_{m+n}$ is an irreducible $\F G$-module. Since $W\pi$ is a non-zero
submodule of $V_{m+n}$, we have $W\pi=V_{m+n}$ and $\dim(W)\ge m+n+1$.
Conversely, suppose $g=\begin{pmatrix} a&b\\c&d\end{pmatrix}\in\GL(2,\F)$. The binomial theorem gives
\begin{align*}
  (X_1^mX_2^n) g &= (aX_1+bY_1)^m(aX_2+bY_2)^n\\
  &=\sum_{i=0}^m\binom{m}{i}(aX_1)^i(bY_1)^{m-i}\sum_{j=0}^n\binom{n}{j}(aX_2)^j(bY_2)^{n-j}.
\end{align*}
Collecting terms involving $a^kb^{m+n-k}$, where $k=i+j$, gives
\[
  (X_1^mX_2^n) g = \sum_{k=0}^{m+n}a^kb^{m+n-k} h_k
\]
where $h_k$ is the polynomial
$\sum_{i+j=k} \binom{m}{i}\binom{n}{j}X_1^{i}Y_1^{m-i}X_2^{j}Y_2^{n-j}$.
This shows that
$W$ is spanned by $h_0,h_1,\dots,h_{m+n}$. Therefore $\dim(W)\le m+n+1$.
Hence $\dim(W)= m+n+1$, and~$\pi$ restricted to $W$ gives
an $\F G$-isomorphism $W\to V_{m+n}$.

Note that $\F[X_1,Y_1,X_2,Y_2]$ is an integral domain, and multiplying by a
non-zero element in an integral domain is an injective map. Multiplying
by $r:=X_1Y_2-Y_1X_2$ gives the map
\begin{equation}\label{E:phi}
  \delta\colon V_{\det}\otimes V_{m-1, n-1}\rightarrow V_{m,n},\quad
   1\otimes h(X_1,Y_1,X_2,Y_2)\mapsto r\cdot h(X_1,Y_1,X_2,Y_2).
\end{equation}
Observe first that $rA$ equals
\begin{equation}\label{E:r}
  (a_{11}X_1+a_{12}Y_1)(a_{21}X_2+a_{22}Y_2)-(a_{21}X_1+a_{22}Y_1)(a_{11}X_2+a_{12}Y_2)
  =(\det A) r
\end{equation}
where $A=\begin{pmatrix} a_{11}&a_{12}\\a_{21}&a_{22}\end{pmatrix}$.
It follows from~\eqref{E:r} that $rA=\det(A)r$ for $A\in\GL(2,\F)$, and
hence $\langle r\rangle=V_{\det}$.
Let $h=h(X_1,Y_1,X_2,Y_2)\in V_{m-1,n-1}$.
We show that $\delta$ is an $\F G$-homomorphism:
\begin{align*}
  ((1\otimes h)A)\delta&=(\det(A)(hA))\delta=\det(A)(hA)r\quad\textup{and}\\
 ((1\otimes h)\delta)A&=(h r)A=(hA) \det(A)r.
\end{align*}
The subspaces $\im\delta=(X_1Y_2-Y_1X_2)V_{m-1, n-1}$ and $W=(X_1^mX_2^n) \F G$
of $V_{m, n}$ intersect trivially. Thus $V_{m, n}$ has a submodule
isomorphic to $(V_{\det}\otimes V_{m-1,n-1})\oplus V_{m+n}$. However
\[
  \dim((V_{\det}\otimes V_{m-1,n-1})\oplus V_{m+n})=mn +(m+n+1)
  = (m+1)(n+1)=\dim V_{m,n},
\]
and this implies
\[
  (V_{\det}\otimes V_{m-1,n-1})\oplus V_{m+n}\cong V_{m,n}\cong V_m\otimes V_n.
\]
The decomposition (a) now follows by induction on~$m$. Observe that
when $\Char(\F)=p$, then hypothesis $m+n<p$ clearly implies that
$(m-1)+(n-1)<p$.

(b)~Denote the symmetric and exterior squares of $V_m$ by $S^2 V_m$
and $\wedge^2 V_m$, respectively. Since $\Char(\F)\ne2$ we have
$V_m\otimes V_m=S^2 V_m\oplus \wedge^2 V_m$. Our proof uses induction on~$m$.
The formulas are true for $m=0$ because $\wedge^2 V_0=0$ (here the sum
is empty), and $S^2 V_0=V_0$. Suppose $m=1$. A calculation
similar to~\eqref{E:r} shows that
$(X\otimes Y-Y\otimes X)A=\det(A)(X\otimes Y-Y\otimes X)$, and hence
\[
  \wedge^2 V_1=\langle X\wedge Y\rangle\cong V_{\det},
  \quad\textup{and}\quad
  S^2 V_1=\langle X\odot X, X\odot Y, Y\odot Y\rangle\cong V_2,
\]
where $u\wedge v:=\frac12(u\otimes v-v\otimes u)$ and
$u\odot v:=\frac12(u\otimes v+v\otimes u)$. Thus the stated decompositions of
$\wedge^2 V_m$ and $S^2 V_m$ are valid for $m=1$. Suppose now that $m\ge2$.

Recall the definitions in part~(a) of the monomorphism $\delta$,
and the epimorphism~$\pi$.
Let $f(X_1, Y_1)\in V_m(X_1,Y_1)$ and $g(X_2, Y_2)\in V_m(X_2,Y_2)$ be arbitrary.
Then
\[
  V_{m,m}  = V_m(X_1,Y_1)V_m(X_2,Y_2)\cong
  V_m\otimes V_m\cong \wedge^2V_m\oplus S^2V_m
\]
where 
\begin{align}
 \wedge^2V_m&=\left\langle f(X_1, Y_1)g(X_2, Y_2)-g(X_1, Y_1)f(X_2, Y_2)\right\rangle\mbox{, and}\label{E:A}\\
 S^2V_m&=\left\langle f(X_1, Y_1)g(X_2, Y_2)+g(X_1, Y_1)f(X_2, Y_2)\right\rangle.\label{E:S}
\end{align}
The following calculation may be used to show that
$(S^2 V_{m-1})\delta\le\wedge^2V_m$:
\begin{align*}
&(X_1Y_2-Y_1X_2)(f(X_1, Y_1)g(X_2, Y_2)+g(X_1, Y_1)f(X_2, Y_2))\\
&= f(X_1, Y_1)X_1g(X_2, Y_2)Y_2-g(X_1, Y_1)Y_1f(X_2, Y_2)X_2\\
&- [f(X_1, Y_1)Y_1g(X_2, Y_2)X_2-g(X_1, Y_1)X_1f(X_2, Y_2)Y_2].
\end{align*}
The left side lies in $(S^2 V_{m-1})\delta$ by~\eqref{E:A}, and the right side
lies in $\wedge^2V_m$ by~\eqref{E:S}. Thus
$V_{\det}\otimes(S^2 V_{m-1})\cong (S^2 V_{m-1})\delta\le\wedge^2V_m$.
This containment, however, is an equality because
\[
  \dim (S^2V_{m-1})\delta=\dim (S^2V_{m-1})=\binom{\dim(V_{m-1})+1}{2}=\binom{m+1}{2}=\dim (\wedge^2V_m).
\]
The inductive decomposition for $S^2V_{m-1}$ and the isomorphism
$V_{\det}\otimes(S^2 V_{m-1})\cong\wedge^2 V_m$ together imply the desired
decomposition for $\wedge^2 V_m$.

A similar argument using \eqref{E:A} and~\eqref{E:S} shows that
$(\wedge^2 V_{m-1}){\delta}\leq S^2V_m$:
\begin{align*}
&(X_1Y_2-Y_1X_2)(f(X_1, Y_1)g(X_2, Y_2)-g(X_1, Y_1)f(X_2, Y_2))\\
&= f(X_1, Y_1)X_1g(X_2, Y_2)Y_2+g(X_1, Y_1)Y_1f(X_2, Y_2)X_2\\
&- [f(X_1, Y_1)Y_1g(X_2, Y_2)X_2+g(X_1, Y_1)X_1f(X_2, Y_2)Y_2].
\end{align*}
Thus $V_{\det}\otimes(\wedge^2V_{m-1})\cong(\wedge^2V_{m-1})\delta\le S^2V_m$ 
is an $\F G$-submodule of $S^2V_m$. By part~(a),
$V_{2m}$ is an $\F G$-submodule of $V_m\otimes V_m$. Our
hypotheses imply that $V_{2m}$ is irreducible. Since
$V_{2m}\not\le\wedge^2 V_m$, we see that
$V_{2m}\le S^2 V_m$ and
it intersects $(\wedge^2V_{m-1})\delta\cong V_{\det}\otimes\wedge^2V_{m-1}$ trivially.
Therefore 
$\left(V_{\det}\otimes\wedge^2V_{m-1}\right)\oplus V_{2m}\le S^2 V_m$. Comparing dimensions shows that
$\left(V_{\det}\otimes\wedge^2V_{m-1}\right)\oplus V_{2m}=S^2V_m$. The inductive
decomposition for $\wedge^2V_{m-1}$ implies the desired decomposition for $S^2V_m$. This proves part~(b).
\end{proof}

When $\Char(\F)=p$, the hypothesis $m+n<p$ in Theorem~\ref{T:CG}(a)
is necessary; essentially because
$\langle X^p,Y^p\rangle$ is a proper submodule of the homogeneous
polynomials of degree $p$.
Nevertheless, it is possible to relax the hypothesis $m+n<p$
when the number of carries when adding $m$ to $n$ in base-$p$ is zero.

\begin{corollary}
Suppose $\F_q$ has characteristic~$p$ where $q=p^e$.
Let $m,n$ be integers with $0\le m,n<q$ and with $p$-adic expansions
$m=\sum_{j\ge0}m_ip^j$ and $n=\sum_{j\ge0}n_ip^j$. If $m_j+n_j<p$ for
each $j\ge0$, then the following $\GL(2,\F_q)$-decomposition holds
where the modules are twisted by powers of the automorphism
$\lambda^\phi=\lambda^p$, as per~\cite{HB}*{Definition~VII.1.13}:
\[
  V_m\otimes V_n=\bigotimes_{j\ge0}\bigoplus_{i=0}^{\min(m_j,n_j)}
  \left(V_{\det}^{\otimes i}\otimes V_{m_j+n_j-2i}\right)^{\phi^j}.
\]
\end{corollary}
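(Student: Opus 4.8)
The plan is to peel off the base-$p$ digits of $m$ and $n$ one at a time and apply Theorem~\ref{T:CG}(a) to each digit separately. Write $G=\GL(2,\F_q)$. First I would invoke the Brauer--Nesbitt/Steinberg description of the irreducible modules recalled before Theorem~\ref{T:CG}: since $0\le m,n<q$,
\[
  V_m\cong\bigotimes_{j\ge0}V_{m_j}^{\phi^j}
  \qquad\textup{and}\qquad
  V_n\cong\bigotimes_{j\ge0}V_{n_j}^{\phi^j},
\]
where $V_0=\F_q$ is the trivial module, so only finitely many factors are non-trivial. Since the $G$-action on a tensor product is diagonal, the flip $U\otimes W\to W\otimes U$ is $G$-equivariant, so tensor products may be reassociated and reordered up to $\F_qG$-isomorphism; moreover twisting by a power of $\phi$ commutes with $\otimes$ and with $\oplus$, i.e.\ $(U\otimes W)^{\phi^j}\cong U^{\phi^j}\otimes W^{\phi^j}$ and $(U\oplus W)^{\phi^j}\cong U^{\phi^j}\oplus W^{\phi^j}$ (see~\cite{HB}). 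Interleaving the factors of the two tensor products above therefore gives
\[
  V_m\otimes V_n\cong\bigotimes_{j\ge0}\bigl(V_{m_j}\otimes V_{n_j}\bigr)^{\phi^j}.
\]

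Next, fix $j$. The hypothesis that no carries occur when adding $m$ to $n$ in base~$p$ says precisely that $m_j+n_j<p=\Char(\F_q)$, so $V_{m_j}$ and $V_{n_j}$ are genuine (untwisted) spaces of homogeneous polynomials of degree less than~$p$, and Theorem~\ref{T:CG}(a) applies verbatim:
\[
  V_{m_j}\otimes V_{n_j}\cong\bigoplus_{i=0}^{\min(m_j,n_j)}V_{\det}^{\otimes i}\otimes V_{m_j+n_j-2i}.
\]
Applying $(-)^{\phi^j}$ to this isomorphism and substituting into the previous display yields exactly the decomposition asserted in the corollary. Finally I would check that the summands are irreducible: $m_j+n_j-2i\le m_j+n_j<p$ makes $V_{m_j+n_j-2i}$ irreducible, tensoring with the one-dimensional module $V_{\det}^{\otimes i}$ preserves irreducibility, and so does twisting by $\phi^j$.

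I do not expect a genuine obstacle, since all the substance is already packaged in Theorem~\ref{T:CG}(a) together with the Steinberg tensor-product structure of the modules $V_m$. The single point that warrants care is the first displayed isomorphism $V_m\otimes V_n\cong\bigotimes_{j\ge0}(V_{m_j}\otimes V_{n_j})^{\phi^j}$: one must state explicitly that the reordering of tensor factors is realised by a $G$-equivariant map (because the action on a tensor product is diagonal) and that $(-)^{\phi^j}$ distributes over tensor products and direct sums --- both facts are standard but easy to gloss over. Everything after that step is a formal substitution.
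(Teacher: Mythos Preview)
Your proposal is correct and follows essentially the same approach as the paper: the paper's proof is the one-line computation $V_m\otimes V_n=\bigotimes_{j\ge0}V_{m_j}^{\phi^j}\otimes \bigotimes_{j\ge0}V_{n_j}^{\phi^j}=\bigotimes_{j\ge0}(V_{m_j}\otimes V_{n_j})^{\phi^j}$ followed by an appeal to Theorem~\ref{T:CG}(a), which is exactly your first two displays. Your additional remarks on the $G$-equivariance of the reordering, the compatibility of the Frobenius twist with $\otimes$ and $\oplus$, and the irreducibility of the summands are welcome justifications that the paper leaves implicit.
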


\begin{proof}
This follows from $V_m\otimes V_n=\bigotimes_{j\ge0}V_{m_j}^{\phi^j}\otimes \bigotimes_{j\ge0}V_{n_j}^{\phi^j}=\bigotimes_{j\ge0}(V_{m_j}\otimes V_{n_j})^{\phi^j}$, and Theorem~\ref{T:CG}(a).
\end{proof}

\section{Simple IAC algebras associated to \texorpdfstring{$\SL(2,\F)$}{}}
\label{Simple}
Since $V_0$ is a trivial $\SL(2,\F)$-module,
Theorem~\ref{T:CG}(b) may be used to determine when $V_m$ is an ESQ-module.
We need $2m-2i=m$ to have a solution with $i$ odd. Thus $i=m/2$ and
$m\equiv2\pmod4$.

\begin{theorem}\label{T:GammaL}
Let $V_m$ be the above $\SL(2,\F)$-module where $m\equiv2\pmod4$
and where the field $\F$ satisfies $\Char(\F)=0$ or $2m<\Char(\F)$.
Then $V_m$ is an absolutely irreducible ESQ-module.
Further, if $|\F|\ge5$, the corresponding IAC algebra $L$ is simple.
\end{theorem}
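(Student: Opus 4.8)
The plan is to deduce the first sentence directly from the Clebsch--Gordan formula, and then to prove simplicity by showing that $\SL(2,\F)$ cannot permute a non-trivial system of minimal ideals. For the first sentence I would restrict the $\GL(2,\F)$-decomposition of $\wedge^2 V_m$ from Theorem~\ref{T:CG}(b) to $\SL(2,\F)$, under which $V_{\det}$ becomes the trivial module; this yields $\wedge^2 V_m\cong\bigoplus V_{2m-2i}$ as $\SL(2,\F)$-modules, the sum being over the odd integers $i$ with $1\le i\le m$. Because $m\equiv2\pmod4$, the integer $i=m/2$ is odd and lies in this range and contributes the summand $V_{2m-m}=V_m$; moreover every index appearing satisfies $2m-2i<2m<\Char(\F)$, so these summands are pairwise non-isomorphic absolutely irreducible $\SL(2,\F)$-modules by the Brauer--Nesbitt description recalled in Section~\ref{sec:CG}. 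Hence $V_m$ occurs in $\wedge^2 V_m$ with multiplicity one, and projection onto that copy gives an essentially unique $\SL(2,\F)$-module epimorphism $\psi\colon\wedge^2 V_m\to V_m$, exhibiting $V_m$ as an (absolutely irreducible) ESQ-module. As in the paragraph following Definition~\ref{def:esq}, and using $\Char(\F)\ne2$, the product $\prd{u}{v}:=(u\wedge v)\psi$ turns $V_m$ into an anti-commutative algebra $L$ with $\SL(2,\F)\le\Aut(L)$; since $\psi$ is surjective, $\prd{L}{L}=L$, so $L$ is non-abelian, and since $\SL(2,\F)$ is irreducible on $V_m$, $L$ is IAC. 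The multiplicity-one statement shows that $L$ is independent of the choice of $\psi$, so one may legitimately speak of \emph{the} corresponding IAC algebra.

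For the simplicity assertion, suppose $|\F|\ge5$ and assume for contradiction that $L$ is not simple. Since $L$ is a finite-dimensional non-abelian IAC algebra, Theorem~\ref{SimpleSum} gives $L\cong I^{\oplus\ell}$ with $I$ a simple IAC algebra, with $\ell\ge2$ the number of minimal ideals of $L$, and with $\Aut(L)\cong\Aut(I)\wr\Sym_\ell$. As $L$ is non-abelian, so is $I$; and because $1$- and $2$-dimensional IAC algebras are abelian (see the remarks preceding Proposition~\ref{prop:51}), we get $\dim I\ge3$ and hence $\ell\le\dim(L)/3=(m+1)/3$. Now $\Aut(L)$ permutes the $\ell$ minimal ideals of $L$, so the subgroup $\SL(2,\F)\le\Aut(L)$ acts on the set $\Omega$ of minimal ideals, $|\Omega|=\ell$. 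The sum of the ideals in any one $\SL(2,\F)$-orbit on $\Omega$ is a non-zero $\SL(2,\F)$-invariant subspace of $L=V_m$; since $V_m$ is an irreducible $\SL(2,\F)$-module and $\ell\ge2$, there is exactly one orbit, so $\SL(2,\F)$ acts transitively and non-trivially on $\Omega$.

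It remains to rule this out. Let $N$ be the kernel of the resulting homomorphism $\SL(2,\F)\to\Sym_\ell$; then $N$ is a proper normal subgroup of $\SL(2,\F)$, so by the simplicity of $\PSL(2,\F)$ (valid since $|\F|\ge4$) it is contained in the centre $Z$ of $\SL(2,\F)$, which has order $2$ as $\Char(\F)\ne2$. Thus $N\in\{1,Z\}$ and $\SL(2,\F)/N$ is isomorphic to $\SL(2,\F)$ or to $\PSL(2,\F)$, and it embeds into $\Sym_\ell$. If $\F$ is infinite this is impossible, since $\SL(2,\F)/N$ is then infinite. If $\F$ is finite, then $\Char(\F)=p$ with $p>2m\ge4$, so $p\ge5$, and $\SL(2,\F)/N$ contains the image of a unipotent element, which has order $p$; hence $\Sym_\ell$ has an element of order $p$ and therefore $\ell\ge p$. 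But $\ell\le(m+1)/3\le(p+1)/6<p$, a contradiction. Therefore $\ell=1$, i.e.\ $L$ is simple.

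The first sentence of the theorem is essentially bookkeeping on Theorem~\ref{T:CG}(b); the real work is in the last step. Reducing ``$L$ non-simple'' to ``$\SL(2,\F)$ acts transitively and non-trivially on $\ell\ge2$ points'' is routine given Theorem~\ref{SimpleSum}, and I expect the main obstacle to be organising the final numerical contradiction cleanly: it rests on the structure of the normal subgroups of $\SL(2,\F)$, on the presence of an element of order $\Char(\F)$ in $\SL(2,\F)$ and $\PSL(2,\F)$, and on the bound $\ell\le(m+1)/3$ coming from $\dim I\ge3$; this is precisely where the hypotheses $|\F|\ge5$ and $2m<\Char(\F)$ are used. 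One may note that for finite $\F$ of characteristic $\ne2$ the inequality $2m<\Char(\F)$ already forces $|\F|\ge5$, so the hypothesis $|\F|\ge5$ is automatically satisfied.
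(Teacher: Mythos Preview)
Your proof is correct and follows the same overall architecture as the paper's: deduce the ESQ property from Theorem~\ref{T:CG}(b), build $L$, and then use Theorem~\ref{SimpleSum} to reduce non-simplicity to a transitive action of $\SL(2,\F)$ on $\ell\ge2$ minimal ideals with $\ell\le(m+1)/3$. The difference is in how the final contradiction is obtained. The paper passes to $\PSL(2,\F)$, invokes its simplicity to get a faithful transitive action on $\ell$ points, and then appeals to the known minimal degree $d$ of a transitive permutation representation of $\PSL(2,q)$ (from \cite{KL}*{Table~5.2.A}), using $2q/3\le d\le\ell\le(m+1)/3<(p/2+1)/3$. You instead argue directly that the kernel of $\SL(2,\F)\to\Sym_\ell$ lies in the centre, so the image contains the image of a unipotent element of order $p$, forcing $\ell\ge p$; combined with $\ell\le(m+1)/3\le(p+1)/6$ this is an immediate contradiction. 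Your route is more elementary and self-contained, avoiding the citation to the table of minimal permutation degrees; the paper's route, on the other hand, would give sharper information if one cared about bounding $\ell$ in terms of $q$ rather than $p$. You are also a bit more careful than the paper in two places: you justify the transitivity of $\SL(2,\F)$ on the minimal ideals via irreducibility (the paper asserts it without argument), and you note the multiplicity-one occurrence of $V_m$ in $\wedge^2 V_m$, which legitimises speaking of \emph{the} corresponding IAC algebra.
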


\begin{proof}
We know that $V_m$ is an absolutely irreducible $\SL(2,\F)$-module both when
$\Char(\F)$ is zero or prime, c.f. \cite{BHRD}*{Theorem~5.2.3},
\cite{Kowalski}*{Theorem~2.6.3} and~\cite{thomas}*{pp.\;57--59}.
The preamble to this theorem showed that $V_m$ is a quotient of $\wedge^2 V_m$.
Thus we may turn $L=V_m$ into an anti-commutative algebra 
satisfying $\prd{L}{L}=L$, as in Section~\ref{sec1}.
Since $\SL(2,\F)$ acts irreducibly on $L=V_m$, we see that $L$ is an IAC
algebra.

It remains to prove that $L$ is a simple IAC algebra. 
By Theorem~\ref{SimpleSum}(b), $L=I_1\oplus\cdots\oplus I_\ell$ where
$I_1,\dots,I_\ell$ are the minimal ideals of $L$ and $\ell\ge1$.
The group $\SL(2,\F)$ permutes
the set $X=\{I_1,\dots,I_\ell\}$ transitively. Since $Z=\Z(\SL(2,\F))$
acts as the identity on this set, the projective linear group $\PSL(2,\F)$
acts on $X$. Further, $L\cong I_1^{\oplus \ell}$ where $I_1$
is a simple IAC algebra, and so $\dim I_1\geq 3$, which implies
that $\ell=\dim (V_m)/\dim(I_1)\le(m+1)/3$.

It is well-known that $\PSL(2,\F)$ is a simple group for $|\F|\ge5$.
Suppose $\ell>1$.
The homomorphism $\PSL(2,\F)\to\Sym_\ell$ is faithful, and the image is
transitive on $\ell$ points.
This is impossible if $\F$ is infinite, as $\Sym_\ell$ is finite and $\PSL(2,\F)$
is not. Thus $|\F|=q$ is finite. Set $p=\Char(\F)$. The minimal degree $d$ of
a transitive permutation representation of $\PSL(2,\F_q)$ is $q+1$
except for $q=5,7,9,11$ in these cases $d=5,7,6,11$,
respectively~\cite{KL}*{Table~5.2.A, p.\;175}.
In all the cases we have $2q/3\le d$.
Our hypothesis $2m<p$ implies
\[
  \frac{2p}{3}\le \frac{2q}{3}\le d\le \ell\le\frac{m+1}{3}<\frac{p/2+1}{3}.
\]
This is false for all primes~$p$, so we conclude that $\ell=1$, and $L$ is simple.
\end{proof}

\begin{remark}
  Theorem~\ref{T:GammaL} can be generalized to include ESQ $H$-modules where
  $\SL(2,\F_q)<H\le\GL(2,\F_q)$. For example, if $m\equiv 2\pmod4$ and
  $i=m/2$ is a multiple of $|H:\SL(2,\F_q)|$, then $V_{\det}^{\otimes i}$ is a
  trivial $H$-module and $L=V_m$ can be turned into an
  IAC algebra with $H\le\Aut(L)$.
\end{remark}

Combining the special case of $\F=\F_p$ in
Theorem~\ref{T:GammaL} with the bijection in Theorem~\ref{th:bij}
gives the following corollary stating the existence of another infinite
family of UCS $p$-groups of exponent $p^2$.

\begin{corollary}
  Let $p\geq 3$ be a prime, and let $m$ be a natural number such that $2m<p$.
  Then there exists an $(m+1)$-generator UCS $p$-group $G$ with exponent $p^2$
  such that the
  subgroup induced by $\Aut(G)$ on $\overline G$ contains $\SL(2,p)$ acting
  in its absolutely irreducible representation of dimension $m+1$ over
  the field $\F_p$. Further, $G$ cannot be written as a direct product
  $(G_0)^{\times k}$ where $G_0$ is a smaller UCS $p$-group.
\end{corollary}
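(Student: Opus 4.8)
The plan is to realise $G$ as the UCS $p$-group dual (under the bijection of Theorem~\ref{th:bij}) to the simple IAC algebra produced by Theorem~\ref{T:GammaL}, and then to read off the two assertions about $G$ from the structural results of Sections~\ref{sec3} and~\ref{sec4}. First I would apply Theorem~\ref{T:GammaL} with $\F=\F_p$: since $m\equiv 2\pmod 4$ and $2m<p=\Char(\F_p)$, the $\SL(2,p)$-module $V_m$ of homogeneous polynomials of degree $m$ --- which is absolutely irreducible of dimension $m+1$ because $m<p$ --- is an ESQ-module, and the associated anti-commutative algebra $L$ over $\F_p$ satisfies $\prd LL=L$ and carries an action of $\SL(2,p)$ by automorphisms in its $(m+1)$-dimensional absolutely irreducible representation. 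Since $m\ge 2$, the inequality $2m<p$ forces $p\ge 5$, so $|\F_p|\ge 5$ and Theorem~\ref{T:GammaL} also gives that $L$ is \emph{simple}.

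Next I would set $G=\grp L$ as in Theorem~\ref{prop:iac2ucs}. By Theorem~\ref{th:bij}(b) this $G$ is a finite UCS $p$-group of exponent $p^2$ with $\iac G\cong L$, and by the construction in Theorem~\ref{prop:iac2ucs} (equivalently, by the polycyclic presentation of Corollary~\ref{C:PCP}) its minimal number of generators equals $\dim L=m+1$; hence $G$ is an $(m+1)$-generator UCS $p$-group of exponent $p^2$. For the automorphism statement I would use Theorem~\ref{th:aut}: the assignment $\alpha\mapsto\overline\alpha$ is a \emph{surjective} homomorphism $\Aut(G)\to\Aut(\iac G)$, so the subgroup of $\GL(\overline G)$ induced by $\Aut(G)$ on $\overline G$ is exactly $\Aut(\iac G)\cong\Aut(L)$. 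Since $\overline G=\iac G\cong L\cong V_m$ as $\SL(2,p)$-modules and $\SL(2,p)\le\Aut(L)$, this induced group contains $\SL(2,p)$ acting in its absolutely irreducible $(m+1)$-dimensional representation over $\F_p$, as required.

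For the last assertion I would argue by contradiction. Suppose $G\cong(G_0)^{\times k}$ with $k\ge 2$ and $G_0$ a smaller UCS $p$-group. Then $\exp G_0=p^2$ (otherwise $\exp G=p$), so $G_0\in\UCS_{p^2}$, and Theorem~\ref{SumPower} yields $L\cong\iac G\cong\iac{G_0}^{\oplus k}$. As $G_0\neq 1$ we have $\iac{G_0}\neq 0$, and $\dim\iac{G_0}=\dim L/k<\dim L$, so $\iac{G_0}$ is a proper non-zero ideal of the simple algebra $L$ --- a contradiction. (Alternatively one may invoke the equivalence of ``$G$ is not a proper direct power of a UCS group'' with ``$\iac G$ is simple'' from the last corollary of Section~\ref{sec4}.) I do not expect any genuine difficulty here: the corollary is a dictionary translation of Theorem~\ref{T:GammaL} through Theorems~\ref{th:bij},~\ref{th:aut} and~\ref{SumPower}. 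The only steps deserving care are the identification of ``the subgroup induced by $\Aut(G)$ on $\overline G$'' with $\Aut(L)$ itself (which is precisely the surjectivity in Theorem~\ref{th:aut}) and the verification that the $\SL(2,p)$-action on $\overline G$ is the absolutely irreducible $(m+1)$-dimensional one, which holds because $\overline G\cong\iac G\cong V_m$ as $\SL(2,p)$-modules.
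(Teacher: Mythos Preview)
Your approach is exactly the one the paper intends: the corollary is stated with no proof beyond the sentence ``Combining the special case of $\F=\F_p$ in Theorem~\ref{T:GammaL} with the bijection in Theorem~\ref{th:bij} gives the following corollary,'' and you have spelled this out carefully, invoking Theorem~\ref{th:aut} for the identification of the induced group on $\overline G$ with $\Aut(L)$ and Theorem~\ref{SumPower} (or the final corollary of Section~\ref{sec4}) for the direct indecomposability. One point worth flagging: you correctly assume $m\equiv 2\pmod 4$ in order to apply Theorem~\ref{T:GammaL}, but this hypothesis is absent from the corollary as stated; since the paper's own derivation goes through Theorem~\ref{T:GammaL} as well, this appears to be a missing hypothesis in the statement rather than a gap in your argument.
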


\section*{Acknowledgments}

The first author gratefully acknowledges
the support of the Australian Research Council Discovery Grant DP160102323.
The third author thanks CNPq for Bolsa de Produtividade em Pesquisa 
Project 308773/2016-0, and Universal Project 475399/2013-7,
as well as acknowledging the hospitality and financial support of the
Centre for the Mathematics of Symmetry and Computation (CMSC) of UWA
during visits in 2014 and 2017.
The second author is grateful to Fapemig for a PhD scholarship and
financially supporting his visit to the CMSC. We thank the referee for
helpful suggestions.

\begin{bibdiv}
  \begin{biblist}*{labels={alphabetic}}

\bib{BHRD}{book}{
   author={Bray, John N.},
   author={Holt, Derek F.},
   author={Roney-Dougal, Colva M.},
   title={The maximal subgroups of the low-dimensional finite classical
   groups},
   series={London Mathematical Society Lecture Note Series},
   volume={407},
   publisher={Cambridge University Press, Cambridge},
   date={2013},
   pages={xiv+438},
   isbn={978-0-521-13860-4},
}

\bib{BD}{article}{
   author={Bremner, Murray R.},
   author={Douglas, Andrew},
   title={The simple non-Lie Malcev algebra as a Lie-Yamaguti algebra},
   journal={J. Algebra},
   volume={358},
   date={2012},
   pages={269--291},
   issn={0021-8693},
}

\bib{eqqlie}{article}{
   author={Bremner, Murray},
   author={Hentzel, Irvin},
   title={Invariant nonassociative algebra structures on irreducible
   representations of simple Lie algebras},
   journal={Experiment. Math.},
   volume={13},
   date={2004},
   number={2},
   pages={231--256},
   issn={1058-6458},
}

\bib{bmwgenus}{article}{
   author={Brooksbank, Peter A.},
   author={Maglione, Joshua},
   author={Wilson, James B.},
   title={A fast isomorphism test for groups whose Lie algebra has genus 2},
   journal={J. Algebra},
   volume={473},
   date={2017},
   pages={545--590},
   issn={0021-8693},
}

\bib{bwtrans}{article}{
   author={Brooksbank, Peter A.},
   author={Wilson, James B.},
   title={Computing isometry groups of Hermitian maps},
   journal={Trans. Amer. Math. Soc.},
   volume={364},
   date={2012},
   number={4},
   pages={1975--1996},
   issn={0002-9947},
}

\bib{el-gob}{article}{
   author={Eick, Bettina},
   author={Leedham-Green, C. R.},
   author={O'Brien, E. A.},
   title={Constructing automorphism groups of $p$-groups},
   journal={Comm. Algebra},
   volume={30},
   date={2002},
   number={5},
   pages={2271--2295},
   issn={0092-7872},
}

\bib{ucs}{article}{
   author={Glasby, S. P.},
   author={P{\'a}lfy, P. P.},
   author={Schneider, Csaba},
   title={$p$-groups with a unique proper non-trivial character\-istic subgroup},
   journal={J. Algebra},
   volume={348},
   date={2011},
   pages={85--109},
   issn={0021-8693},
}



\bib{gerhard}{article}{
   author={Hiss, Gerhard},
   title={Die adjungierten Darstellungen der Chevalley-Gruppen},
   language={German},
   journal={Arch. Math. (Basel)},
   volume={42},
   date={1984},
   number={5},
   pages={408--416},
   issn={0003-889X},
}

\bib{handbookcgt}{book}{
   author={Holt, Derek F.},
   author={Eick, Bettina},
   author={O'Brien, Eamonn A.},
   title={Handbook of computational group theory},
   series={Discrete Mathematics and its Applications (Boca Raton)},
   publisher={Chapman \& Hall/CRC, Boca Raton, FL},
   date={2005},
   pages={xvi+514},
   isbn={1-58488-372-3},
}

\bib{HB}{book}{
   author={Huppert, Bertram},
   author={Blackburn, Norman},
   title={Finite groups. II},
   series={Grundlehren der Mathematischen Wissenschaften [Fundamental
   Principles of Mathematical Sciences]},
   volume={242},
   note={AMD, 44},
   publisher={Springer-Verlag, Berlin-New York},
   date={1982},
   pages={xiii+531},
   isbn={3-540-10632-4},
}


\bib{KL}{book}{
   author={Kleidman, Peter},
   author={Liebeck, Martin},
   title={The subgroup structure of the finite classical groups},
   series={London Mathematical Society Lecture Note Series},
   volume={129},
   publisher={Cambridge University Press, Cambridge},
   date={1990},
   pages={x+303},
   isbn={0-521-35949-X},
}

\bib{kmr}{article}{
   author={Kleidman, P. B.},
   author={Meierfrankenfeld, U.},
   author={Ryba, A. J. E.},
   title={${\rm HS}<E_7(5)$},
   journal={J. London Math. Soc. (2)},
   volume={60},
   date={1999},
   number={1},
   pages={95--107},
   issn={0024-6107},
}

\bib{Kowalski}{book}{
   author={Kowalski, Emmanuel},
   title={An introduction to the representation theory of groups},
   series={Graduate Studies in Mathematics},
   volume={155},
   publisher={American Mathematical Society, Providence, RI},
   date={2014},
   pages={vi+432},
   isbn={978-1-4704-0966-1},
}

\bib{kuzmin}{article}{
   author={Kuzmin, E. N.},
   title={Structure and representations of finite dimensional Malcev
   algebras},
   journal={Quasigroups Related Systems},
   volume={22},
   date={2014},
   number={1},
   pages={97--132},
   issn={1561-2848},
}

\bib{powerful}{article}{
   author={Lubotzky, Alexander},
   author={Mann, Avinoam},
   title={Powerful $p$-groups. I. Finite groups},
   journal={J. Algebra},
   volume={105},
   date={1987},
   number={2},
   pages={484--505},
   issn={0021-8693},
}

\bib{suprunenko}{book}{
   author={Suprunenko, D. A.},
   title={Matrix groups},
   note={Translated from the Russian;
   Translation edited by K. A. Hirsch;
   Translations of Mathematical Monographs, Vol. 45},
   publisher={American Mathematical Soc., Providence, R.I.},
   date={1976},
   pages={viii+252},
}

\bib{taunt}{article}{
   author={Taunt, D. R.},
   title={Finite groups having unique proper characteristic subgroups. I},
   journal={Proc. Cambridge Philos. Soc.},
   volume={51},
   date={1955},
   pages={25--36},
}

\bib{thomas}{book}{
   author={Thomas, Charles B.},
   title={Representations of finite and Lie groups},
   publisher={Imperial College Press, London},
   date={2004},
   pages={x+146},
   isbn={1-86094-482-5},
}

\bib{wilsonchar}{article}{
   author={Wilson, James B.},
   title={More characteristic subgroups, Lie rings, and isomorphism tests
   for $p$-groups},
   journal={J. Group Theory},
   volume={16},
   date={2013},
   number={6},
   pages={875--897},
   issn={1433-5883},
}
\bib{wilsonauts}{article}{
   author={Wilson, James B.},
   title={On automorphisms of groups, rings, and algebras},
   journal={Comm. Algebra},
   volume={45},
   date={2017},
   number={4},
   pages={1452--1478},
   issn={0092-7872},
}

\bib{wilson}{book}{
   author={Wilson, Robert A.},
   title={The finite simple groups},
   series={Graduate Texts in Mathematics},
   volume={251},
   publisher={Springer-Verlag London, Ltd., London},
   date={2009},
   pages={xvi+298},
   isbn={978-1-84800-987-5},
}

  \end{biblist}
  \end{bibdiv}

\end{document}